\documentclass[11pt]{amsart}
\usepackage[utf8]{inputenc}
\usepackage[T1]{fontenc}
\usepackage[a4paper, margin = 2.8cm]{geometry}
\usepackage{baskervillef}
\usepackage[noadjust]{cite}
\usepackage{amsmath}
\usepackage{nccmath}
\usepackage{mathrsfs}
\usepackage[english]{babel}
\usepackage{amssymb}
\usepackage{amsthm}
\usepackage{bm}
\usepackage{tikz}
\usetikzlibrary{positioning,decorations.markings,backgrounds,fit,shapes.geometric,graphs}
\usepackage{tikz-cd}
\usepackage{tikzsymbols}
\usepackage{mdframed}
\usepackage{framed}
\usepackage{mathtools}
\usepackage{fancyhdr}
\usepackage{enumerate}
\usepackage{graphicx}
\usepackage{hyperref}
\usepackage{cleveref}
\usepackage{colortbl}
\usepackage{epigraph}
\usepackage{subfig}
\usepackage{gensymb}
\usepackage{comment}
\usepackage[font=small, labelfont=bf]{caption}
\usepackage{chngcntr}
\usepackage{xcolor}
\usepackage{graphicx}
\usepackage{framed}
\usepackage[normalem]{ulem}
\usepackage{mathdots}
\usepackage[symbol]{footmisc}
\usepackage{booktabs} 

\theoremstyle{plain}
\newtheorem{proposition}{Proposition}[section]
\newtheorem{corollary}[proposition]{Corollary}
\newtheorem{lemma}[proposition]{Lemma}

\newtheorem{theorem}[proposition]{Theorem}

\theoremstyle{definition}
\newtheorem{definition}[proposition]{Definition}
\newtheorem{example}[proposition]{Example}
\newtheorem*{example*}{Example}

\theoremstyle{remark}
\newtheorem*{notation}{Notation}
\newtheorem{remark}[proposition]{Remark}
\newtheorem*{remark*}{Remark}

\numberwithin{equation}{section}
\numberwithin{figure}{section}

\newcommand*\openquote{\makebox(25,-22){\scalebox{5}{\color{white}``}}}
\newcommand*\closequote{\makebox(25,-22){\scalebox{5}{\color{white}''}}}
\colorlet{shadecolor}{gray!20}

\makeatletter
\newif\if@right
\def\shadequote{\@righttrue\shadequote@i}
\def\shadequote@i{\begin{snugshade}\begin{quote}\openquote}
\def\endshadequote{%
  \if@right\hfill\fi\closequote\end{quote}\end{snugshade}}
\@namedef{shadequote*}{\@rightfalse\shadequote@i}
\@namedef{endshadequote*}{\endshadequote}
\makeatother

\DeclareMathOperator{\Ker}{Ker}

\DeclareMathOperator{\onto}{\twoheadrightarrow}
\DeclareMathOperator{\into}{\hookrightarrow}

\DeclarePairedDelimiter\abs{\lvert}{\rvert}

\DeclareMathOperator{\Aut}{Aut}

\DeclareMathOperator{\GL}{GL}
\DeclareMathOperator{\SL}{SL}

\DeclareMathOperator{\mfG}{\mathfrak{G}}
\DeclareMathOperator{\mfU}{\mathfrak{U}}

\DeclareMathOperator{\aff}{aff}

\newcommand\mc{\mathcal}
\newcommand\ov{\overline}
\newcommand\wt{\widetilde}

\DeclareMathOperator{\K}{\mathbb{K}}
\DeclareMathOperator{\Z}{\mathbb{Z}}

\DeclareMathOperator{\N}{\mathbb{N}}

\DeclareMathOperator{\F}{\mathbb{F}}

\title{Presentation of Borel subgroups of Kac--Moody groups over local rings}
\author{Robynn Corveleyn}
\keywords{Kac--Moody groups, Borel subgroups, amalgams of rank 2 subgroups, local rings}
\subjclass{20G44, 20E42, 20F05}
\thanks{The author is grateful for financial support from the FWO and the F.R.S.–FNRS under the Excellence of Science (EOS) program (project ID 40007542).}
\address{Robynn Corveleyn ({\tt robynn.corveleyn@uclouvain.be}) \newline Institut de Recherche en Math\'{e}matique et Physique, UCLouvain, Chemin du Cyclotron 2, 1348 Louvain-la-Neuve, Belgium.}
\begin{document}
\begin{abstract}
We obtain presentations of Borel subgroups of certain Chevalley groups and Kac--Moody groups of simply laced type. More precisely, we show that Borel subgroups of simply laced Chevalley groups over a commutative, unital ring (satisfying a minor technical condition), are the amalgamated product of their rank $2$ subgroups, generalising a result of Tits over fields (different from $\F_2$). We moreover show that Borel subgroups of Kac--Moody groups over local Bézout domains also have a presentation as an amalgamated product of their rank $2$ subgroups as soon as this holds for their rank $3$ subgroups. In particular we deduce that Borel subgroups of simply laced Kac--Moody groups over a local Bézout domain (with residue field different from $\F_2$) have such a presentation if and only if they are of $3$-spherical type. 
\end{abstract}
\maketitle
\thispagestyle{empty}

\section{Introduction}
\begingroup
\renewcommand\theproposition{\Alph{proposition}}
In this article, we show that the Borel subgroups of simply laced Kac--Moody groups over a local Bézout domain (satisfying the minor condition that its residue field is different from $\F_2$) have a presentation as the amalgamated product of a certain finite system of subgroups if and only if the Kac--Moody group is of $3$-spherical type. 

A (minimal) Kac--Moody group $\mfG_A(R)$ over a ring $R$ is constructed from a generalised Cartan matrix (GCM) $A$. More precisely, associated to any GCM $A$, there is a Coxeter group $\mc{W}$ (the Weyl group of $A$), and the group $\mfG_A(R)$ is defined by a Steinberg-like presentation in terms of generators $x_{\alpha}(r)$, for all $r \in R$ and all roots $\alpha$ in the root system $\Phi$ of the Coxeter complex of $\mc{W}$. If $\mc{W}$ is infinite (or, equivalently, $A$ is not a Cartan matrix), there are infinitely many roots $\alpha$, and hence $\mfG_A(R)$ is defined by a presentation in terms of infinitely many generators (and infinitely many relations).

Inside $\mfG_A(R)$, there is a Borel subgroup $B^+_R$, consisting of a “maximal unipotent” subgroup $U^+_R$ generated by the elements $x_{\alpha}(r)$ for all $\alpha \in \Phi$ positive, as well as a torus $T_R$. 

In the literature, the question of obtaining tractable presentations of the groups $U^+_R$ (and hence $B^+_R$) has mostly been considered over fields. In \cite[Proposition 7]{Tits86} (see also \Cref{reduction step Tits} below), Tits proved that over most rings $R$, the maximal unipotent subgroup $U^+_{R}$ of a Chevalley group (i.e.\ a Kac--Moody group of \emph{spherical} type, meaning that $A$ is a Cartan matrix) is the amalgamated product of its rank $2$ subgroups as soon as this holds for all of its rank $3$ subgroups. He moreover showed that when $\K$ is a field different from $\F_2$, the group $U^+_{\K}$ of type $A_3$ (i.e.\ the group of upper triangular unipotent matrices in $\SL_4(\K)$) has such a presentation.

Later, using geometric arguments, Abramenko obtained in \cite[Proposition 11]{Abramenko} the same result for the maximal unipotent subgroup $U^+_{\K}$ of Chevalley groups of type $B_3$ and $C_3$ over fields $\K$, provided that $\K$ has size at least $16$. This bound was improved to $5$ in \cite{HornNessVMDG}. In combination with the reduction step in Tits' work, this effectively settled the question of obtaining such presentations of the subgroup $U^+_{\K}$ of Chevalley groups over fields $\K$.

Over more general rings, the same question remains open. In this paper, we adapt Tits' strategy in \cite{Tits86} to prove the following. For any root $\alpha$, denote by $U_{\alpha}$ the subgroup of $U^+_R$ generated by $x_{\alpha}(r)$ for all $r\in R$. Recall that a GCM $A = (A_{ij})_{i,j\in I}$ is called \emph{simply laced} if $A_{ij}A_{ji} \leq 1$ for all distinct $i,j \in I$.

\begin{theorem}
\label{spherical main}
Let $A = (A_{ij})_{i,j \in I}$ be a simply laced Cartan matrix. Let $R$ be a commutative, unital ring such that $1_R$ is the sum of $2$ invertible elements. Then $U^+_R \leqslant \mfG_A(R)$ is the amalgamated product of its subgroups $U_{i,j} :=  \langle U_{\alpha_i}, U_{\alpha_j} \rangle$, along the canonical inclusions $U_{\alpha_i} \into U_{i,j}$, for all $i,j \in I$.
\end{theorem}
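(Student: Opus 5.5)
The proof will follow Tits' strategy from \cite{Tits86}: first reduce to rank $3$, then treat the rank $3$ cases directly. The reduction step (Tits' Proposition 7, recalled later as \Cref{reduction step Tits}) says that for a Cartan matrix $A$, the group $U^+_R$ is the amalgamated product of the $U_{i,j}$ as soon as the same holds for every rank $3$ subgroup $U_J^+$, $J\subseteq I$ with $|J|=3$. I would check that this reduction holds over an arbitrary commutative unital ring. Tits' argument is combinatorial, working with the Coxeter complex and root subgroups, and it uses only the Steinberg-type commutator relations and the fact that $U^+_R$ is a product of root subgroups in any order with unique decomposition. Both of these hold over rings for Chevalley groups.

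Once reduced, the simply laced rank $3$ types are $A_1^3$, $A_1\times A_2$ and $A_3$. The reducible cases are routine. There the amalgam of the rank $2$ subgroups is the direct product of the pieces, since commuting generators in the amalgam commute in $U_{i,j}$ and the presentation matches. So the real content is type $A_3$: the unipotent upper triangular group $U$ of $\SL_4(R)$.

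For $A_3$, with simple roots $\alpha_1,\alpha_2,\alpha_3$ and $\alpha_1\perp\alpha_3$, let $\tilde U$ be the amalgamated product of $U_{1,2}$, $U_{2,3}$ and $U_{1,3}=U_{\alpha_1}\times U_{\alpha_3}$. There is a natural surjection $\tilde U\to U$, and I will show it is injective. In $\tilde U$, define $x_{\alpha_1+\alpha_2}(r)$ and $x_{\alpha_2+\alpha_3}(r)$ inside $U_{1,2}$ and $U_{2,3}$ respectively. The key is to define $x_{\alpha_1+\alpha_2+\alpha_3}(r)$ and prove that the Steinberg relations of $U$ hold in $\tilde U$. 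Once they do, $\tilde U$ satisfies the standard presentation of $U$ (unipotent groups over rings are presented by root subgroups and the Chevalley commutator relations), so the surjection is an isomorphism.

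The main obstacle is showing that $[x_{\alpha_1+\alpha_2}(r),x_{\alpha_3}(s)]$ depends only on $rs$, commutes with everything, and agrees with $[x_{\alpha_1}(r),x_{\alpha_2+\alpha_3}(s)]$. This is exactly where the hypothesis $1=u+v$ with $u,v\in R^\times$ enters, replacing Tits' use of $|\K|>2$. I would use the torus: the diagonal elements $h$ of $\SL_4(R)$ act on each $U_{i,j}$ by scaling the root coordinates, and these actions are compatible, so they induce automorphisms of $\tilde U$. Writing $x_{\alpha_1}(r)=x_{\alpha_1}(ur)x_{\alpha_1}(vr)$ and using torus elements that fix $\alpha_3$ but scale $\alpha_1$ gives bilinearity-type identities. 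These show the candidate element $z=[x_{\alpha_1+\alpha_2}(r),x_{\alpha_3}(s)]$ commutes with $U_{\alpha_1}$ and $U_{\alpha_3}$. The Hall--Witt identity, together with the relations already available in the rank $2$ pieces, then gives the remaining commutations and the equality of the two expressions. I expect this to be a delicate computation, and I would model it on Tits' field argument, replacing each choice of a scalar $t\neq 0,1$ by the pair of units $u,v$.
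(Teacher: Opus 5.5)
Your outer structure matches the paper's. It uses Tits' reduction step (\cite[Proposition 7]{Tits86}), valid over any commutative ring, with its rank-$2$ generation hypothesis automatic in the simply laced case. The decomposable rank-$3$ cases are trivial, and for $A_3$ one checks that the amalgam $\wt{U}$ satisfies the missing Steinberg relations. Your guess that the hypothesis enters by replacing Tits' scalar $t\neq 0,1$ with $t,1-t\in R^\times$ is also exactly right. But the $A_3$ computation is the whole content of the theorem, and the mechanism you sketch for it does not work.

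That $z=[x_{\alpha_1+\alpha_2}(r),x_{\alpha_3}(s)]$ commutes with $U_{\alpha_1}$ is automatic: $U_{\alpha_1}$ commutes with $U_{\alpha_1+\alpha_2}$ inside $U_{1,2}$ and with $U_{\alpha_3}$ inside $U_{1,3}$. The real difficulty is commutation with $U_{\alpha_3}$, equivalently additivity of $z$ in $s$. Torus automorphisms of $\wt{U}$ exist, but they only rescale by units relations you already have. The splitting $x_{\alpha_1}(r)=x_{\alpha_1}(ur)x_{\alpha_1}(vr)$ is just additivity in $U_{\alpha_1}$ and never involves $U_{\alpha_3}$. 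Neither can produce the link between $U_{\alpha_1+\alpha_2}$ and $U_{\alpha_3}$, which must come from the interaction of $[U_{\alpha_1},U_{\alpha_3}]=1$ with $U_{\alpha_2}$. Hall--Witt on its own only equates a conjugate of $z$ with a conjugate of some $[x_{\alpha_1}(\cdot),x_{\alpha_2+\alpha_3}(\cdot)]$. It becomes useful only once centrality is known.

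The missing idea is Tits' trick, and it is where your splitting really belongs.
\begin{enumerate}[(1)]
\item Conjugate $[x_{\alpha_1}(r),x_{\alpha_3}(s)]=1$ by $x_{\alpha_2}(t)$. This gives $[x_{\alpha_1}(r)x_{\alpha_1+\alpha_2}(rt),\,x_{\alpha_3}(s)x_{\alpha_2+\alpha_3}(-st)]=1$.
\item Write the two entries as $u_1v_1$ and $u_3v_3$, with $u_1=x_{\alpha_1}(r-rt)$, $v_1=x_{\alpha_1}(rt)x_{\alpha_1+\alpha_2}(rt)$, $u_3=x_{\alpha_3}(s-st)$ and $v_3=x_{\alpha_3}(st)x_{\alpha_2+\alpha_3}(-st)$. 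Then $u_i\in U_i=U_{\alpha_i}$ and $v_i\in V_i=x_{\alpha_2}(-1)U_ix_{\alpha_2}(1)$.
\item The groups $U_1,U_3,V_1,V_3$ commute pairwise except for the pairs $(U_1,V_3)$ and $(U_3,V_1)$. An elementary manipulation (Tits' Lemme 4) then yields $[u_1,v_3]=[u_3,v_1]$, and shows this element is central in $X=\langle U_1,U_3,V_1,V_3\rangle$. Note that $X$ contains $U_{\alpha_1+\alpha_2}$ and $U_{\alpha_2+\alpha_3}$.
\item Because $t$ and $1-t$ are units, $r-rt$ and $st$ run over all of $R$. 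Hence every $[x_{\alpha_1}(a),x_{\alpha_2+\alpha_3}(b)]$ and every $[x_{\alpha_1+\alpha_2}(a),x_{\alpha_3}(b)]$ is central in $X$.
\item Substituting $t=\lambda(1+\lambda)^{-1}$ turns the identity into $[x_{\alpha_1}(r),x_{\alpha_2+\alpha_3}(s)]=[x_{\alpha_1+\alpha_2}(r\lambda),x_{\alpha_3}(s\lambda^{-1})]$ for $\lambda,1+\lambda\in R^\times$. Three instances of this give $[x_{\alpha_1+\alpha_2}(r),x_{\alpha_3}(s)]=[x_{\alpha_1}(r),x_{\alpha_2+\alpha_3}(s)]$.
\end{enumerate}
From there the paper derives, in order:
\begin{enumerate}[(a)]
\item $[U_{\alpha_1+\alpha_2},U_{\alpha_2+\alpha_3}]=1$;
\item invariance of $z$ under conjugation by $U_{\alpha_2}$;
\item dependence of $z$ only on $rs$, which is where Hall--Witt is used;
\item additivity.
\end{enumerate}
Without this step your proposal does not yet prove the theorem.
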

Over semi-local rings, the condition that $1_R$ is the sum of two invertible elements is equivalent to the ring having no quotient $\F_2$ (see \Cref{exists element a+1 a2+1} below).

In \cite{DevillersMu}, similar results for Kac--Moody groups of non-spherical type over fields $\K$ are obtained. Using geometric methods rather than group theoretic ones as in Tits' work, the authors again make a reduction step, showing that $U^+_{\K}$ has a presentation in terms of an amalgamated product of its rank $2$ subgroups, if this is the case for each rank $3$ subgroup. Using then the aforementioned results for the $A_3$ (\cite{Tits86}), $B_3$ and $C_3$ cases (\cite{Abramenko} and \cite{HornNessVMDG}), this yields the desired presentation for $U^+_{\K} \leqslant \mfG_A(\K)$, with $A$ of $3$-spherical type, and $\K$ a field of size at least $5$. Note that a GCM $A = (A_{ij})_{i,j \in I}$ is called \emph{$3$-spherical} if $A_J := (A_{ij})_{i,j \in J}$ is a Cartan matrix for all $J\subseteq I$ with $\abs{J} = 3$.

It was moreover recently shown in \cite[Theorem E]{CapMar26} that $A$ being of $3$-spherical type is a necessary condition for $U^+_{\K}$ to have this presentation. These results together provide a reasonably good understanding of presentations of $U^+_{\K}$ in the Kac--Moody setting over fields. Over more general rings, however, this is no longer the case. 

In this paper, we again carry out a reduction step to the rank $3$ case, as in \cite{DevillersMu}, this time for local Bézout domains $R$. We will establish our results for the group \[\ov{U}^+_R = \varphi_R(U^+_R) \leqslant~\mfG_A(\K),\] where $\K$ is the field of fractions of $R$ and $\varphi_R$ is the group homomorphism
\[\varphi_R \colon \mfG_A(R) \to \mfG_A(\K),\]
naturally induced by the inclusion $R \into \K$. It was recently proven in \cite{MMinject} that, in this context, $\varphi_R$ is actually injective, and hence $\ov{U}^+_R \cong U^+_R$. However, we will not need this fact, and in fact our methods provide an alternative proof of the main result of \emph{loc.\ cit.\ }in the $3$-spherical simply laced case (see \Cref{corollary injective} below).

For each $\alpha \in \Phi$, we again write $U_{\alpha}$ for its image in $\mfG_A(\K)$, and we write $\ov{U}_{i,j}$ for $\varphi_R(\langle U_{\alpha_i} , U_{\alpha_j}\rangle)$.

The reduction step to the rank $3$ case which we obtain is then as follows. 

\begin{theorem}
\label{main theorem}
Let $(A_{ij})_{i,j\in I}$ be a $3$-spherical generalised Cartan matrix. Let $R$ be a local Bézout domain satisfying \eqref{co}. Suppose that for every $J \subseteq I$ with $\abs{J} = 3$, the subgroup
\[U^+_{J R} := \mfG_{A_{J}}(R) \cap U^+_R  \] 
of $U^+_R$ is the amalgamated product of its subgroups $U_{j_1, j_2} := \langle U_{\alpha_{j_1}}, U_{\alpha_{j_2}}\rangle$ along the canonical inclusions $U_{\alpha_{j_1}} \into U_{j_1, j_2}$, for all $j_1, j_2 \in J$. Then 
$\ov{U}^{+}_R$ is the amalgamated product of its subgroups $\ov{U}_{i_1,i_2}$ along the inclusions $U_{\alpha_{i_1}} \into \ov{U}_{i_1, i_2}$, for all  $i_1,i_2 \in I$.
\end{theorem}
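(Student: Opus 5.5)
We follow the geometric strategy of \cite{DevillersMu}, replacing the building of $\mfG_A(\K)$ by a suitable simply connected chamber-type complex on which $\ov{U}^+_R$ acts. We use the standard criterion of Tits for amalgams: a group $G$ acting on a simplicial complex $X$ with a simplex $\sigma$ as strict fundamental domain is the amalgamated product (colimit) of the stabilisers of the faces of $\sigma$, provided $X$ is simply connected. A complex of groups version applied to the rank $\le 2$ residues will then give the amalgam of the $\ov{U}_{i_1,i_2}$ along the $U_{\alpha_i}$.

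\textbf{Step 1 (the complex).} Let $\Delta_+$ be the positive twin building of $\mfG_A(\K)$ and $c_-$ the fundamental negative chamber. Inside $\Delta_+$, take the set $X$ of chambers opposite $c_-$; by Birkhoff decomposition, $U^+_\K$ acts simply transitively on it. Define $X_R\subseteq X$ to be the $\ov{U}^+_R$-orbit of the fundamental chamber $c_+$. Since $R$ is local Bézout, we want to identify $X_R$ combinatorially: two chambers of $X_R$ that are $i$-adjacent in $\Delta_+$ should differ by an element of $\ov U^+_R\cap(\text{stabiliser of the }i\text{-panel})$. Using that the valuation-type structure of a Bézout local ring makes every element of $R$ or its inverse lie in $R$, we show that the stabiliser in $\ov{U}^+_R$ of $c_+$ is trivial. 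We also show that the stabiliser of a spherical residue of type $J$ through $c_+$ is the image of $U^+_{JR}$. This requires decomposing elements of $U_\alpha(\K)$ and checking integrality of root coordinates. We also use \eqref{co} here, to know that $R^\times$ has enough elements for rank-one computations.

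\textbf{Step 2 (simple connectivity reduction).} By the rank $3$ hypothesis, each $3$-spherical residue of $X_R$ is a quotient of the universal cover coming from the rank $2$ amalgam. So the claim reduces to showing that the chamber system $X_R$, with its rank $\le 3$ residues, is simply connected. More precisely, every closed gallery is null-homotopic using only rank $\le 2$ residues, together with rank $3$ residues whose simple connectivity follows from the hypothesis. We follow \cite{DevillersMu}. Use the codistance to $c_-$ (equivalently, the $W$-distance to $c_+$ along a fixed apartment) as a height function. Given a closed gallery, push down the vertex of maximal height. The local link at a chamber $d$ of the set of lower neighbours is the set of chambers in rank $2$ and $3$ residues opposite-like to the projection. $3$-sphericity guarantees that these lie in spherical residues, where the lowering can be done inside a rank $3$ residue.

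\textbf{Step 3 (local connectedness — main obstacle).} The crucial input is that within each spherical residue $\mathcal R$ of rank $2$ or $3$ meeting $X_R$, the subcomplex $X_R\cap\mathcal R$ is connected and simply connected enough to perform the homotopies. Over a field this is the statement that the complement of chambers non-opposite a fixed one is (simply) connected. Over $R$, $X_R\cap\mathcal R$ is a proper subcomplex. We must show that every ``lowering'' gallery can be chosen with all intermediate chambers in the $R$-orbit. I would try to prove this by showing that $X_R\cap\mathcal R$ is exactly the orbit of $c_+$ under $\varphi_R(U^+_{JR})$. This uses locality: for the rank-one pieces, $x_\alpha(r)$ with $r\notin R$ is replaced via $r^{-1}\in\mathfrak m$ and the rank-one relation in $\SL_2$. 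This is the step where the Bézout hypothesis, needed to reduce $2\times2$ computations to the local valuation-like case, and \eqref{co} are essential. Granting Step 3, Tits' lemma gives the amalgam presentation of $\ov{U}^+_R$.
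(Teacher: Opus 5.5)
There is a genuine gap, located in the choice of filtration, which is also where your unproven Step 3 lives.

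\textbf{Step 1 has the signs reversed.} $\ov U^+_R\subseteq B^+_\K$ fixes $c_+\in\Delta_+$, so its orbit of $c_+$ is a single chamber whose stabiliser is all of $\ov U^+_R$. The chambers of $\Delta_+$ opposite $c_-$ form the $U^-_\K$-orbit of $c_+$. You must mean $X_R=\ov U^+_R c_-\subseteq\Delta_-$, whose stabiliser is trivial simply because $U^+_\K\cap B^-_\K=\{1\}$; no valuation argument is needed.

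\textbf{The height function in Step 2 is the wrong one.} If you filter $\Delta_-$ by the codistance to $c_+$ computed over $\K$, the bottom level is the set of \emph{all} chambers opposite $c_+$, namely $U^+_\K c_-$. This in general strictly contains $X_R$ (already for $\SL_2$ it is $\K\supsetneq R$ inside $\mathbb P^1(\K)$), and $\ov U^+_R$ is not transitive on it. A closed gallery in $X_R$ has constant height, so the ``push down'' procedure only acts on null-homotopies in the ambient building. With this height it pushes them into $U^+_\K c_-$, not into $X_R$. Correspondingly, the affine parts of residues are $gU^+_{J\K}c_-$, so the rank $3$ input consumed is the statement over $\K$, not your hypothesis over $R$. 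As stated, the argument proves the field result for $U^+_\K$ and says nothing about $\ov U^+_R$.

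\textbf{The missing idea is to measure height after reduction to the residue field $k$.}
\begin{enumerate}[(1)]
\item The paper identifies $\Delta_-$ with $G^{\min}_R/\ov B^-_R$ (\Cref{coincides w building of K}; this is where the B\'ezout hypothesis enters). Hence every chamber has a representative $g\in G^{\min}_R$.
\item It proves a local Birkhoff decomposition $G^{\min}_R=\coprod_w B(w)$ with $B(w)=p^{-1}(U^+_k\tilde wB^-_k)=\ov U^+_R\tilde w\ov B^-_R\Ker(p)$ (\Cref{thm birkhoff}, resting on $\Ker(p)=\ov U^+_L\ov B^-_L$).
\item It filters by $\delta(gc_0^-)=w\iff g\in B(w)$. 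This makes the bottom level exactly $\ov U^+_Rc_0^-$ and gives a residual filtration.
\item It shows $\aff(\mc R_J)=g\,\ov U^+_{JR}c_0^-$ for a suitable $g$ (\Cref{aff of residue}, via \Cref{5.10} and the fact that $\tilde w_J$ conjugates $U^+_{Jk}$ into $B^+_k$ for $w_J$ right $J$-reduced). This is what your Step 3 should prove, and it is needed for \emph{every} residue, not only those meeting $X_R$.
\item From this, \eqref{lco} follows from generation by simple root groups. Via \Cref{Tits presentation}, \eqref{lsco} is equivalent to the rank $3$ hypothesis. Then \Cref{314 devmu} together with $\ov U^+_R\cap\ov P^-_{JR}=\ov U^+_{JR}$ finishes the proof.
\end{enumerate}

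Your rank-one sketch (replacing $x_\alpha(r)$, $r\notin R$, using $r^{-1}\in\mathfrak m$) does not produce this description, and as written Step 3 is simply granted.
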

The condition \eqref{co} is the following mild assumption on $R$.
\begin{align}
\tag{co} R \: &  \text{ has }  \text{no quotient } \F_2  \text{ if } A_{ij}A_{ji} = 2 \text{ for some } i,j \in  I, \text{ and } R \text{ has no quotient } \F_2 \text{ or } \F_3 \label{co} \\  \text{ if } & A_{ij}A_{ji} = 3 \text{ for some } i,j \in I.  \notag
\end{align}

In combination with \Cref{spherical main}, we then obtain a presentation for the subgroups $\ov{U}^+_R$ of $3$-spherical simply laced Kac--Moody groups.

\begin{corollary}
\label{corollary C}
Let $R$ be a local Bézout domain such that its residue field is not $\F_2
$. Let $(A_{ij})_{i,j \in I}$ be a simply laced generalised Cartan matrix. Then $\ov{U}^+_R \leqslant \mfG_A^{\min}(R)$ is the amalgamated product of its subgroups $\ov{U}_{i_1, i_2}$ along the canonical inclusions $U_{\alpha_{i_1}} \into \ov{U}_{i_1, i_2}$, for all $i_1, i_2 \in I$, if and only if $A$ is $3$-spherical. 
\end{corollary}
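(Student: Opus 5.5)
The plan is to prove the two directions separately: ``if'' from \Cref{main theorem} combined with \Cref{spherical main}, and ``only if'' from the necessity result over fields \cite[Theorem E]{CapMar26}.

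\textbf{The ``if'' direction.} Assume $A$ is $3$-spherical and simply laced. Then $A_{ij}A_{ji}\in\{0,1\}$ for all distinct $i,j$, so condition \eqref{co} imposes nothing. It remains to check the rank $3$ hypothesis of \Cref{main theorem}. Fix $J\subseteq I$ with $\abs{J}=3$. By $3$-sphericity, $A_J$ is a simply laced Cartan matrix, namely of type $A_1^3$, $A_1\times A_2$ or $A_3$.

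Since $R$ is local with residue field $k\neq\F_2$, pick $\bar a\in k\setminus\{0,1\}$ and lift it to $a\in R$. Then $a$ and $1-a$ both have nonzero image in $k$, so both are units, and $1=a+(1-a)$ is a sum of two invertible elements. This is also the content of \Cref{exists element a+1 a2+1} in the local case.

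\Cref{spherical main} now applies to $\mfG_{A_J}(R)$. Its maximal unipotent subgroup is the amalgamated product of the $U_{j_1,j_2}$ along the $U_{\alpha_{j_1}}$. One must check that this subgroup coincides with $U^+_{JR}=\mfG_{A_J}(R)\cap U^+_R$, i.e.\ that the positive unipotent subgroup of the rank $3$ Levi-type subgroup is the intersection. I would take this identification from the paper's setup, reading $\mfG_{A_J}(R)$ as the subgroup generated by the root groups of $\pm\alpha_j$, $j\in J$. With that in hand, \Cref{main theorem} gives the presentation of $\ov{U}^+_R$.

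\textbf{The ``only if'' direction.} Suppose $A$ is not $3$-spherical, and choose $J$ with $\abs{J}=3$ and $A_J$ not a Cartan matrix. I would reduce to the field case as follows.

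\begin{enumerate}
\item Note that $\ov{U}^+_R\leqslant\mfG_A(\K)$.
\item Suppose $\ov{U}^+_R$ were the amalgam $\mathcal{A}_R$ of the $\ov{U}_{i_1,i_2}$. Consider the natural map from $\mathcal{A}_R$ to the amalgam $\mathcal{A}_\K$ of the rank $2$ subgroups of $U^+_\K$.
\item The goal is to show that some relation holding in $U^+_\K$ fails in $\mathcal{A}_\K$ in a way that already involves only elements coming from $R$.
\end{enumerate}

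Concretely, I would revisit the proof of \cite[Theorem E]{CapMar26}. For a non-spherical rank $3$ subdiagram, it exhibits a relation among root group elements that is not a consequence of the rank $2$ relations, for instance a nontrivial element of $\pi_1$ of the associated (non-simply connected) geometry or a word in the kernel. The task is to check that this witness can be chosen with parameters in $R$. Taking parameters $\pm1$, or units, should suffice, because the construction over fields uses only generic parameters.

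An alternative route, used if $R$ admits enough structure, is to restrict to the rank $3$ subgroup $\ov{U}^+_{JR}$. If $\ov{U}^+_R$ is the amalgam, one would like the amalgam to restrict to $J$; this follows from the retraction $U^+\to U^+_J$ killing $U_{\alpha_i}$ for $i\notin J$, if such a retraction exists on the amalgam side. For the Kac--Moody group itself it does not obviously exist, so I would instead argue via the abelianisation or via a covering-space/geometric realisation argument as in \cite{CapMar26}.

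\textbf{Main obstacle.} I expect the only-if direction to be the hard step: transferring the field-theoretic non-presentability of \cite{CapMar26} to $R$-points. A priori the amalgam over $R$ and over $\K$ are different objects. What I would try first is to show directly that the rank $3$ non-spherical amalgam over $R$ is not simply connected. The approach would be to build a quotient of the abstract amalgam of the $\ov{U}_{i_1,i_2}$ in which some element becomes nontrivial while its image in $\ov{U}^+_R$ is trivial, mimicking the construction of \emph{loc.\ cit.} with all parameters in $\Z\cdot1_R$.
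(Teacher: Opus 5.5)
Your ``if'' direction is the paper's argument (\Cref{3 spherical implies}). Condition \eqref{co} is vacuous for simply laced $A$, $1_R=a+(1-a)$ is a sum of two units because $k\neq\F_2$, and the simply laced spherical rank $3$ cases are then covered by \Cref{spherical main}. The paper itself uses \Cref{case A3} for $A_3$ and a direct check for the decomposable types. You defer one identification to ``the paper's setup'', and it should be made explicit. The hypothesis actually used in the proof of \Cref{main thm in text} concerns the images $\ov{U}^+_{JR}\leqslant G^{\min}_R$. The paper connects these to the abstract unipotent group of \Cref{case A3} via Stein's result \Cref{U+ spherical is U+bar}, which gives $\ov{U}^+_R\cong U^+_R$ in spherical type over a local domain.

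The ``only if'' direction has a genuine gap: it is a plan, not an argument. Its key step is that a witness from the proof of \cite[Theorem E]{CapMar26} can be chosen with parameters in $\Z\cdot 1_R$. You never verify this, and nothing guarantees that the argument there even produces such an explicit relation. The missing idea is that no transfer through $\K$ is needed, because the amalgam hypothesis by itself makes $\varphi_R|_{U^+_R}$ injective.

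Suppose $\ov{U}^+_R$ is the amalgam of the $\ov{U}_{i_1,i_2}$. Then it is presented by the root groups $U_\alpha$, with $\alpha$ in a positive rank $2$ subsystem, subject to the rank $2$ relations \eqref{commutation KM min}. These relations already hold in $U^+_R$, so there is a homomorphism $\ov{U}^+_R\to U^+_R$ sending $x_{\alpha_i}(r)$ to $x_{\alpha_i}(r)$. Since $U^+_R$ is generated by the $U_{\alpha_i}$ (\Cref{generated by simples 2 spherical}), this map is a left inverse of $\varphi_R|_{U^+_R}$. Hence $U^+_R\cong\ov{U}^+_R$, and $U^+_R$ itself is the amalgam of its rank $2$ subgroups.

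This is the first paragraph of the proof of \Cref{injectivity}, which uses only the amalgam property and not $3$-sphericity. The paper invokes it in \Cref{3 spherical necessary} and then applies \cite[Theorem E]{CapMar26} directly to $U^+_R\leqslant\mfG_A(R)$. So the only bridge required is from $\ov{U}^+_R$ to $U^+_R$, not from $R$ to $\K$.

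Incidentally, your restriction to a rank $3$ subdiagram $J$ needs no retraction of the Kac--Moody group. Killing $U_{\alpha_i}$ for $i\notin J$ is compatible with every rank $2$ subgroup, so it already retracts the amalgam onto the rank $3$ amalgam. However, this only reduces you to the $\tilde{A}_2$ case over $R$, which is exactly what your proposal leaves open.
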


As alluded to above, \Cref{corollary C} implies that under the same hypotheses, the canonical map $\varphi_R \colon \mfG_A(R) \to \mfG_A(\K)$ is injective, thus providing an alternative proof in the $3$-spherical simply laced case of the recent result \cite[Theorem A]{MMinject} that $\varphi_R$ is injective.

\begin{corollary}
\label{corollary injective}
Let $A$ be a $3$-spherical simply laced GCM and let $R$ be a local Bézout domain such that its residue field is different from $\F_2$. Then the canonical map 
\[\varphi_R \colon \mfG_{A}(R) \to \mfG_{A}(\K)\]
is injective.
\end{corollary}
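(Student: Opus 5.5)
The plan is to show that the restriction of $\varphi_R$ to $U^+_R$ is injective, and then extend this to all of $\mfG_A(R)$ using the refined Tits system structure over a local ring.

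\emph{Step 1: injectivity on $U^+_R$.} By \Cref{corollary C}, $\ov{U}^+_R$ is the amalgamated product $\mc{A}$ of the subgroups $\ov{U}_{i_1,i_2}$ along the inclusions $U_{\alpha_{i_1}} \into \ov{U}_{i_1,i_2}$. Each rank $2$ subgroup $\langle U_{\alpha_{i_1}}, U_{\alpha_{i_2}}\rangle \leqslant U^+_R$ lives in a Chevalley group of type $A_1\times A_1$ or $A_2$. Both types are finite dimensional, and there $\varphi_R$ is visibly injective: these are concrete matrix groups over $R \subseteq \K$. Hence $\varphi_R$ restricts to isomorphisms $U_{i_1,i_2} \cong \ov{U}_{i_1,i_2}$. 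These isomorphisms are compatible with the inclusions of the root groups, so the universal property of $\mc{A}$ gives a homomorphism $\psi\colon \ov{U}^+_R \to U^+_R$ that sends $x_{\alpha_i}(r)$ to $x_{\alpha_i}(r)$. The subgroup $U^+_R$ is generated by the simple root groups $U_{\alpha_i}$, together with their conjugates under the Weyl group representatives; this is the point to justify. Granting it, $\psi\circ\varphi_R|_{U^+_R}$ is the identity on generators, hence the identity. So $\varphi_R|_{U^+_R}$ is injective.

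\emph{Step 2: pass to $\mfG_A(R)$.} Over a local ring $R$, $\mfG_A(R)$ admits a Bruhat-type decomposition $\mfG_A(R) = \bigsqcup_{w} B^+_R\, \tilde w\, U_w$, where $U_w$ is a finite product of positive root groups. This holds because $\mfG_A(R)$ is generated by $T_R$, the root groups, and the rank one subgroups, and the local condition gives $\SL_2(R) = B\cup BsB$. Write $g = b\tilde w u$ with $b=tu'$, and suppose $\varphi_R(g)=1$. Since $\varphi_R(T_R)\leqslant T_\K$ and $\varphi_R(\tilde w)$ represents $w$ in the Bruhat decomposition of $\mfG_A(\K)$, uniqueness over $\K$ forces $w=1$. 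Then $\varphi_R(tu'u)=1$, and the decomposition $B^+_\K = T_\K\ltimes U^+_\K$ gives $\varphi_R(t)=1$ and $\varphi_R(u'u)=1$. The torus $T_R = \mathrm{Hom}(\Lambda,R^\times)$ injects into $T_\K$, and Step 1 then gives $u'u=1$. Hence $g=1$.

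\emph{Main obstacle.} The delicate point is Step 2: establishing the decomposition over $R$ with $U_w$ inside $U^+_R$, and making sure that no extra kernel hides in the torus. I would first reduce to the $RGD$/twin root datum axioms for $\mfG_A(R)$ with $R$ local. This works because the relevant rank one computations take place in $\SL_2(R)$ or $\GL_2(R)$, where $R$ local gives the required Bruhat decomposition directly.
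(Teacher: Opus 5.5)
Your Step 1 is sound and is essentially the paper's own first step, with two small corrections. First, you may only use the ``if'' direction of \Cref{corollary C}, i.e.\ \Cref{3 spherical implies}, because the paper deduces the ``only if'' direction from the very statement you are proving. Second, what you need is simply $U^+_R=\langle U_{\alpha_i R}\mid i\in I\rangle$. This is \Cref{generated by simples 2 spherical}, since \eqref{co} is vacuous for simply laced $A$; the conjugates $\tilde w U_{\alpha_i}\tilde w^{-1}=U_{w\alpha_i}$ need not lie in $U^+_R$ and play no role.

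The genuine gap is Step 2. The decomposition $\mfG_A(R)=\bigsqcup_w B^+_R\tilde w U_w$ is false as soon as $R$ is local but not a field. So is your rank one input $\SL_2(R)=B\cup BsB$: every element of $B^+_2(R)\begin{psmallmatrix}0&1\\-1&0\end{psmallmatrix}B^+_2(R)$ has a unit as lower left entry, so $\begin{psmallmatrix}1&0\\m&1\end{psmallmatrix}$ with $0\neq m\in L$ lies in neither cell. The same happens inside $\mfG_A(R)$: $x_{-\alpha_i}(m)$ lies in the $s_i$-cell over $\K$ but maps to $1$ in $G_k$, so it lies in no $B^+_R\tilde wU_w$. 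For the same reason there is no RGD system over $R$. For $0\neq m\in L$ and any $u',u''\in U_{-\alpha}$, the product $u'x_{\alpha}(m)u''$ has $(1,1)$-entry of the form $1+mb$, a unit, and $(1,2)$-entry $m\neq 0$, so it is never monomial. Hence the plan in your last paragraph cannot be carried out.

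The missing idea is to go through the reduction map $\pi_R\colon G_R\to G_k$ rather than through Bruhat cells over $R$. At this point the paper simply invokes the proof of \cite[Theorem 5.5]{MMinject} verbatim. A replacement for your Step 2 using only results recorded in the paper goes as follows.

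If $\varphi_R(g)=1$, then $\pi_R(g)=1\in B^-_kB^+_k$. Since $G_R=G_{IR}T_R$ and $T_R$ normalises $U^{\pm}_R$, \Cref{5.10} with $J=I$ gives $g=tu_-u_+$ with $t\in T_R$ and $u_\pm\in U^\pm_R$. Then $\varphi_R(u_-)=\varphi_R(t)^{-1}\varphi_R(u_+)^{-1}\in\ov U^-_R\cap\ov B^+_R=\{1\}$ by \eqref{Bir}. Hence $\varphi_R(t)=\varphi_R(u_+)^{-1}\in T_R\cap\ov U^+_R=\{1\}$. Now $g=1$ follows from injectivity of $\varphi_R$ on $T_R$, on $U^+_R$ (your Step 1) and on $U^-_R$. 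Your proposal never addresses the last of these; it follows from Step 1 via the Cartan--Chevalley involution, or by rerunning Step 1 for $U^-_R$.
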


The first part of the paper is dedicated to the proof of \Cref{main theorem}. We now briefly outline some of the methods we will use to prove this result.

To obtain their presentation of $U^+_{\K}$ for $A$ non-spherical and $\K$ a field, the authors of \cite{DevillersMu} make use \cite[Corollaire 1]{Tits86} (sometimes referred to as the \emph{Tits lemma}) which, for groups $G$ acting transitively on a chamber system $C$, characterises the existence of a presentation of $G$ as the amalgamated product of the stabilisers of rank $2$ residues in $C$, in terms of the simple connectedness of $C$. 

The authors of \cite{DevillersMu} moreover introduce \emph{filtrations} of chamber systems $C$, meaning a sequence $C_0 \subseteq C_1 \subseteq C_2~\subseteq~\ldots$ of subcomplexes of $C$ such that $\bigcup_{n \in \N} C_n = C$, satisfying some additional properties, and show that under certain conditions, the simple connectedness of $C$ is equivalent to the simple connectedness of $C_0$ (see Theorem 3.14 in \emph{loc.\ cit.\ }or \Cref{314 devmu} below).

Since Kac--Moody groups over fields $\K$ have an associated building $\mc{C}^-(\mfG_A(\K)) = \mfG_A(\K)/B^-_{\K}$, which is a simply connected chamber system, the strategy of \emph{loc.\ cit.\ }is to construct a suitable filtration of $\mc{C}^-(\mfG_A(\K))$ such that $U^+_{\K}$ acts transitively on $C_0$. A condition on $(C_n)_{n \in \N}$ required to apply their result and conclude that $C_0$ is simply connected, amounts to the rank $3$ residues of $C_0$ being simply connected. In turn, by the Tits lemma, this condition is equivalent to the subgroups $U^+_{J \K}$ of $U^+_{\K}$ with $\abs{J} = 3$ admitting a presentation as an amalgamated product of their rank $2$ subgroups. 

For any domain $R$ with field of fractions $\K$, the Kac--Moody group $\mfG_A^{\min}(R) := \varphi_R(\mfG_A(R)) \leqslant \mfG_A(\K)$ also has an associated chamber system $\mc{C}^-(\mfG^{\min}_A(R))$ on which it acts transitively. When $R$ is a  Bézout domain, it is moreover isomorphic to the building $\mc{C}^-(\mfG_A(\K))$, by \cite[Lemma 8.7]{MMinject}. Hence in this case, $\mc{C}^-(\mfG^{\min}_A(R))$ is simply connected.

In the first part of this paper, we obtain a Birkhoff decomposition for $\mfG_A^{\min}(R)$ for $R$ a local ring. We then use this result to construct, under the assumptions of \Cref{main theorem}, a filtration $(C_n)_{n \in \N}$ of $\mc{C}^-(\mfG^{\min}_A(R))$, such that $\ov{U}^+_R$ acts transitively on $C_0$, and which reduces the simple connectedness of $C_0$ to that of its rank $3$ residues.

This article is structured as follows. In \cref{sect prelim}, we introduce the necessary preliminaries on Kac--Moody groups which we will use throughout the rest of the paper. In \cref{sect birkhoff} we construct a Birkhoff decomposition of $\mfG^{\min}_A(R)$, for $R$ a local ring. \Cref{sect filtration} is dedicated to the proof of \Cref{main theorem}: we first give the necessary background from \cite{DevillersMu}, and then obtain the filtration required to carry out the reduction step to the rank $3$ case, showing \Cref{main theorem}. In \cref{sect A3}, we prove \Cref{spherical main}, and in \cref{section presentation U} we conclude with the proofs of \Cref{corollary C} and \Cref{corollary injective}.

\vspace{5pt}
\noindent \textbf{Acknowledgement.}
I would like to thank Timothée Marquis for helpful comments on an earlier version of this article.

\vspace{5pt}
\noindent \textbf{Declaration on the usage of AI.}
This work, from the initial research phase until the present written article, was completed without the use of artificial intelligence at any stage.
\begin{notation}
Throughout this article, for a group $G$ and elements $x,y \in G$, we denote by $[x,y]$ the commutator $[x,y] := x^{-1} y^{-1} xy$. Moreover, we define $x^y := y^{-1} x y$. 
\end{notation}

\endgroup
\setcounter{proposition}{0}
\section{Preliminaries}
\label{sect prelim}
\subsection{Coxeter groups}
\label{sect coxeter}
For a reference on the notions introduced in this section, we refer to \cite[Chapter 2]{AbramenkoBrown}. A \emph{Coxeter system} consists of a pair $(W,S)$, where $S$ is a (finite) set of involutions which generate the \emph{Coxeter group} $W$, the latter of which is given by a presentation
\[W := \left\langle s_i\in S \mid (s_i s_j)^{2 m_{ij}} = 1 \right\rangle,\]
where $m_{ij} \in \N \cup \{\infty\}$ and $m_{ii} = 1$ for all $i$. Let $n = \abs{S}$. We call $(m_{ij})_{1\leq i,j \leq n}$ the \emph{Coxeter matrix} associated to $(W,S)$. For any $w \in W$, there is a uniquely defined \emph{length} $\ell(w)$ defined as $d$ if $w~=~s_{i_1} \ldots s_{i_d}$ and $d$ is minimal for this property. We say that the \emph{word} $\mathbf{w} = (s_{i_1}, \ldots, s_{i_d})$ on the alphabet $S$ is \emph{reduced} if for the corresponding element $w := s_{i_1} \ldots s_{i_d}\in W$, we have $\ell(w) = d$.

By Tits' solution to the word problem for Coxeter groups (see for example \cite[Theorem 2.33]{AbramenkoBrown}), any word $(s_{j_1} , \ldots, s_{j_m})$ may be rewritten in reduced form by carrying out a sequence of the following two operations: 
\begin{enumerate}
\item[(M1)] Deleting a subword of the form $(s_i,s_i)$. 
\item[(M2)] \hypertarget{M2}{Replacing a  subword $\mathbf{t}(i,j) := (s_i, s_j, s_i, \ldots)$ of length $m_{ij} < \infty$ by the word $\mathbf{t}(j,i) := (s_j, s_i, s_j,\ldots)$ of length $m_{ji} = m_{ij}$.}
\end{enumerate}
We will denote by $\tau(i,j)$ the element $s_i s_j s_i \ldots \in W$ corresponding to the word $\mathbf{t}(i,j)$. 

A Coxeter system $(W,S)$ has a naturally associated simplicial complex, called a \emph{Coxeter complex}, on which $W$ acts. It comes equipped with a set $\Phi$ of \emph{roots} or \emph{half spaces}, consisting of \emph{simple roots} $\alpha_{s_i}$ for $s_i \in S$, and their images $w(\alpha_{s_i})$ for all $w \in W$. The root system $\Phi$ decomposes into a union $\Phi = \Phi^+ \sqcup \Phi^-$ of subsets $\Phi^+$ and $\Phi^-$ consisting of respectively \emph{positive} and \emph{negative} roots. The sign of roots is characterised as follows.
\begin{lemma}[{\cite[Theorem 5.4]{humphreysRefl}}]
\label{length ws implies root positive}
Let $(W,S)$ be a Coxeter system with root system $\Phi = \Phi^+ \sqcup~\Phi^-$. Let $\alpha_s \in \Phi^+$ for $s\in S$ denote the simple root associated to $s$. Then for any $w \in W$, $\ell(ws) < \ell(w)$ if and only if $w(\alpha_s) \in \Phi^-$.
\end{lemma}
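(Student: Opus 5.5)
The statement to prove is the standard characterisation of descents: for $w \in W$ and $s \in S$, $\ell(ws)<\ell(w)$ if and only if $w(\alpha_s)\in\Phi^-$. I will work in the geometric (Tits) representation. Let $V$ be a real vector space with basis $\{\alpha_s\}_{s\in S}$ and bilinear form $B(\alpha_s,\alpha_t) = -\cos(\pi/m_{st})$, with value $-1$ if $m_{st}=\infty$. Let $s$ act by $\sigma_s(v) = v - 2B(\alpha_s,v)\alpha_s$. Then $\Phi = \{w(\alpha_s)\}$, and $\Phi^\pm$ consists of the roots whose coefficients are all nonnegative, respectively all nonpositive.

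The heart of the argument is the following claim: if $\ell(ws)>\ell(w)$, then $w(\alpha_s)$ is a nonnegative combination of simple roots. Granting the claim, both directions follow.
\begin{itemize}
\item If $\ell(ws)>\ell(w)$, the claim gives $w(\alpha_s)\in\Phi^+$.
\item If $\ell(ws)<\ell(w)$, set $w' = ws$. Then $\ell(w's)>\ell(w')$, so $w'(\alpha_s)\ge 0$ by the claim. Hence $w(\alpha_s) = w'(s\alpha_s) = -w'(\alpha_s)\in\Phi^-$.
\end{itemize}
Since $\ell(ws)=\ell(w)\pm1$ always, these two cases are exhaustive. In particular this also shows $\Phi=\Phi^+\sqcup\Phi^-$.

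I will prove the claim by induction on $\ell(w)$; it is trivial for $w=1$. Suppose $w\ne1$ and pick $t\in S$ with $\ell(wt)<\ell(w)$. Then $t\neq s$, since $\ell(ws)>\ell(w)$. Let $W_{\{s,t\}}$ be the dihedral subgroup generated by $s$ and $t$. Among all factorisations $w = v\cdot u$ with $u\in W_{\{s,t\}}$ and $\ell(w)=\ell(v)+\ell_{\{s,t\}}(u)$, choose one with $\ell(v)$ minimal. Here $\ell_{\{s,t\}}$ denotes length with respect to $\{s,t\}$. Taking $v = wt$, $u = t$ shows $\ell(v)\le\ell(w)-1$.

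By minimality of $\ell(v)$, both $\ell(vs)>\ell(v)$ and $\ell(vt)>\ell(v)$, since otherwise one could shorten $v$ further. Induction then gives $v(\alpha_s)\ge0$ and $v(\alpha_t)\ge0$.

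It remains to show that $u(\alpha_s) = a\alpha_s + b\alpha_t$ with $a,b\ge0$. The key input is that $\ell_{\{s,t\}}(us)\ge \ell_{\{s,t\}}(u)$. If instead $\ell_{\{s,t\}}(us)<\ell_{\{s,t\}}(u)$, then
\[
\ell(ws) \le \ell(v)+\ell_{\{s,t\}}(us) < \ell(v)+\ell_{\{s,t\}}(u) = \ell(w),
\]
a contradiction. So every reduced expression of $u$ in $\{s,t\}$ ends in $t$, and $u$ is an alternating word $\cdots st$ of length less than $m_{st}$.

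Now I compute directly in the $2$-dimensional space spanned by $\alpha_s$ and $\alpha_t$. Write $\theta=\pi/m_{st}$. A standard computation shows that such $u$ sends $\alpha_s$ to $\frac{\sin((k+1)\theta)}{\sin\theta}\alpha_s + \frac{\sin(k\theta)}{\sin\theta}\alpha_t$ (up to swapping the roles of $s$ and $t$), with $k<m_{st}$. Both coefficients are nonnegative. When $m_{st}=\infty$, the same formula holds with $\sin(j\theta)/\sin\theta$ replaced by $j$.

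Combining, $w(\alpha_s) = a\,v(\alpha_s)+b\,v(\alpha_t)\ge0$, which proves the claim.

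I expect two points to be the main obstacle. The first is this rank-$2$ trigonometric computation. The second is justifying the length bound $\ell(ws)\le\ell(v)+\ell_{\{s,t\}}(us)$ used above, which in fact follows simply from the subadditivity of $\ell$ and $\ell\le\ell_{\{s,t\}}$ on $W_{\{s,t\}}$.
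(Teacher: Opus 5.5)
Your proof is correct, and it is exactly Humphreys' argument for Theorem 5.4, which the paper cites rather than proves: induction on $\ell(w)$, a length-additive factorisation $w=vu$ with $u\in W_{\{s,t\}}$ and $\ell(v)$ minimal, then the explicit rank-$2$ computation. The only step you leave implicit is that if $\ell(vs)<\ell(v)$, then $(vs,su)$ is again length-additive, which follows from $\ell(w)\le\ell(vs)+\ell_{\{s,t\}}(su)\le(\ell(v)-1)+(\ell_{\{s,t\}}(u)+1)=\ell(w)$.
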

Let $J \subseteq S$. We say that $w \in W$ is \emph{right $J$-reduced} if $\ell(w s_j) > \ell(w)$ for all $s_j \in J$.  We denote moreover by $W_J$ the subgroup of $W$ generated by all $s_j \in J$. For any $w \in W$, the coset $w W_J$ has a unique shortest element $w_J$. This is the unique element $w_J \in wW_J$ such that $\ell(w_J s_j) = \ell(w_J) + 1$ for all $s_j \in J$.  The element $w_J$ is right $J$-reduced. Moreover, for any $w' \in w W_J$, $\ell(w') = \ell(w_J) + \ell(w_J^{-1} w')$ (see \cite[Proposition 2.20]{AbramenkoBrown}).

\subsection{Generalised Cartan matrices and associated Weyl group}
Let $A = (A_{ij})_{i,j \in I}$ be a matrix with integer coefficients. It is called a \emph{generalised Cartan matrix} (GCM) if for all $i \neq j \in I$, $A_{ii} = 2$, $A_{ij} \leq 0$ and $A_{ij} = 0 \iff A_{ji} = 0$. 

A GCM is called \emph{spherical} (equivalently, of \emph{finite type}) if it is a Cartan matrix. We say that $A$ is \emph{$n$-spherical} if $A_{J} := (A_{ij})_{i,j \in J}$ is spherical, for all $J \subseteq I$ with $\abs{J} \leq n$. The size $\abs{I}$ of $I$ is called the \emph{rank} of $A$. In this paper, all GCM's $A$ are assumed to be $2$-spherical.

A GCM is called \emph{decomposable} if it can be written as a block-diagonal matrix $\begin{psmallmatrix} A_1 & 0 \\ 0 & A_2\end{psmallmatrix}$ after applying a permutation of its index set $I$.

A GCM is called \emph{simply laced} if for all $J \subseteq I$ of rank $2$, $A_J = \begin{psmallmatrix} 2 & -1 \\ -1 & 2\end{psmallmatrix}$ or $A_J = \begin{psmallmatrix} 2 & 0 \\ 0 & 2\end{psmallmatrix}$. Simply laced GCM's are $2$-spherical. The Cartan matrix \emph{of type $A_3$} is given by 
\[A = \begin{psmallmatrix} 2 & -1 & 0 \\ -1 & 2 & -1 \\ 0 & -1 & 2\end{psmallmatrix}.\]

Let $Q := \bigoplus_{i \in I} {\Z}{\alpha_i}$ be the free abelian group generated by symbols $\alpha_i$, for all $i \in I$. They are called the \emph{simple roots} associated to $A$. We denote $\Pi := \{\alpha_i \mid i \in I \}$.
Let $\mc{W} \leqslant \GL(Q)$ be the group generated by the \emph{simple reflections} \[ s_i \colon Q \to Q, \quad  \alpha_j \mapsto \alpha_j - A_{ij}\alpha_i.\] It is called the \emph{Weyl group} of $A$. 

The pair $(\mc{W},S)$, where $S := \{s_i \mid i \in I \}$, is a Coxeter system, with associated Coxeter matrix $(m_{ij})_{i,j \in I}$ where for $i \neq j$, we have $m_{ij} = 2,3,4,6$, or $\infty$, respectively if and only if $A_{ij}A_{ji} = 0,1,2,3$ or $\geq 4$. Note that the GCM $A$ is spherical if and only if $\mc{W}$ is finite, and $A$ is $2$-spherical if and only if $m_{ij} < \infty$ for all $i, j \in I$.
The set $\Phi := \mc{W} \cdot \Pi$ is called the set of \emph{real roots} associated to $A$, and it is in one-to-one correspondence with the set of roots of the Coxeter complex of $(\mc{W},S)$.

For any $J \subseteq I$, we define $\Phi^{\pm}(J) := \Phi^{\pm} \cap \bigoplus_{j \in J} \Z \alpha_j$.

A pair of distinct roots $\alpha, \beta \in \Phi$ is called \emph{prenilpotent} if there are some $w_1, w_2 \in \mc{W}$ such that $\{w_1  \alpha, w_2   \beta\}\subseteq \Phi^+$ and $\{w_2   \alpha, w_2   \beta \} \subseteq \Phi^-$. Then the set $]\alpha, \beta[_{\N} :=\left(\N \alpha_i +\N \alpha_j\right) \cap \Phi$ is finite (see \cite[Definition 7.41]{MarquisKMGbook}). 

\subsection{\texorpdfstring{The Kac--Moody groups $\mfG_{A}(R)$ and $\mfG_{A}^{\min}(R)$}{The Kac-Moody groups G(R) and Gmin(R)}}

\label{sect KM groups}
In this section, we briefly introduce the Kac--Moody groups $\mfG_{A}(R)$ and $\mfG_{A}^{\min}(R)$ under consideration in this article. We refer respectively to \cite[Chapter 7]{MarquisKMGbook} and to \cite[Section 8.8]{MarquisKMGbook} for the precise definitions of $\mfG_{A}(R)$ and $\mfG^{\min}_{A}(R)$. Throughout this section, let $R$ be a commutative ring with unit.

\subsubsection*{\texorpdfstring{The groups $\mfG_{A}(R)$}{The groups G(R)}} Let $A = (A_{ij})_{i,j \in I}$ be a GCM. 
The \emph{(“simply connected”) minimal Kac--Moody group $\mfG_{A}(R)$ of type $A$ over $R$}  is given by a presentation, with generators depending on the real roots $\Phi = \mc{W} \cdot \{\alpha_i \mid i \in I\}$ associated to $A$. 
We refer to \cite[Definition 7.47 {\it \&} Example 7.11]{MarquisKMGbook} for the precise definition (see also \cite{Tits87}). For our purposes, the relevant information is the following. 

For every $\alpha \in \Phi$, we specify a copy $\mfU_{\alpha}(R) \cong (R,+)$ of the additive group of $R$, and denote the isomorphism by 
\[x_{\alpha} \colon (R,+) \to \mfU_{\alpha}(R).\]
We denote by $\mathfrak{St}_A(R)$ the group defined as the free product of all the $\mfU_{\alpha}(R)$ as $\alpha$ runs through $\Phi$, quotiented out by the relations 
\begin{equation}
\label{commutation KM min}
[x_{\alpha}(r), x_{\beta}(s)] = \prod_{\gamma}x_{\gamma}(C_{ij}^{\alpha \beta} r^{i} s^{j}), \quad \text{ for all } \{\alpha, \beta\} \subseteq \Phi \text{ prenilpotent and all } r, s \in R,
\end{equation}
where $\gamma = i \alpha + j \beta$ enumerates $]\alpha,\beta[_{\N}$ in a fixed order, and the constants $C_{ij}^{\alpha\beta}$ are integers as given by \cite[Proposition 7.43]{MarquisKMGbook}.
The group $\mfG_{A}(R)$ is then defined in terms of a presentation, generated by the group $\mathfrak{St}_A(R)$, subject to some additional relations (see \cite[Definition 7.47]{MarquisKMGbook}). 

Throughout, we set $G_R := \mfG_{A}(R)$, and distinguish the following subgroups of $G_R$.
We identify 
\[U_{\alpha} = U_{\alpha R} := \mfU_{\alpha}(R), \text{ for any } \alpha \in \Phi, \] 
with their images in $G_R$, and define moreover
\[
U^{\pm}_{AR} : = U^{\pm}_R := \langle U_{\alpha R} \mid \alpha \in \Phi^{\pm} \rangle.
\]
If $A = \begin{psmallmatrix} A_1 & 0 \\ 0 & A_2 \end{psmallmatrix}$ is decomposable, then $U^{\pm}_{AR}~\cong~U^{\pm}_{A_1 R}~\times~U^{\pm}_{A_2 R}$. 

Inside $G_R$, there is a \emph{torus} $T_R \cong (R^{\times})^{\abs{I}}$, and it follows from the defining relations of $G_R$ that $T_R$ normalises $U^{\pm}_R$. 
We define \emph{the Borel subgroups} $B^{\pm}_R \leqslant G_R$ as 
\[B^{\pm}_R := U^{\pm}_R T_R.\]
For any ring $R$, we will write $B^{+}_2(R)$ (resp.\ $B^-_2(R)$) for the subgroup of upper (resp.\ lower) triangular matrices of $\SL_2(R)$, and $U^{\pm}_2(R)$ for the subgroup of $B^{\pm}_2(R)$ consisting of unipotent matrices.

For $i \in I$, we define moreover
\[G_{iR} := \langle U_{\alpha_i R}, U_{-\alpha_i R}\rangle \leqslant G_R \quad \text{ and } \quad P_{iR}^{\pm} := \langle G_{iR} , B^{\pm}_R\rangle \leqslant G_R,\]
and for $J\subseteq I$, 
\[ U_{JR}^{\pm} := \langle U_{\alpha R} \mid \alpha \in \Phi^{\pm}(J) \rangle, \quad \text{ and } \quad G_{JR} := \langle G_{j R} \mid j \in J\rangle, \quad \text{ and } \quad P_{JR}^{\pm} := \langle G_{JR} , B^{\pm}_R \rangle.  \]

Moreover, for any $A$, the torus $T_R$ normalises each $G_{iR}$. The following fact will be useful to us.
\begin{lemma}[{\cite[Lemma 4.3]{MMinject}}]
\label{levi decomp}
Let $A$ be $2$-spherical and suppose $R$ satisfies \eqref{co} in case $A$ is non-spherical. Then for any $i \in I$ and $\varepsilon \in \{\pm\}$,
\[ G_{iR} U^{\varepsilon}_R = U^{\varepsilon}_R G_{iR} \quad \text{ and } \quad G_{iR} B^{\varepsilon}_R = B_R^{\varepsilon} G_{iR}. \]
\end{lemma}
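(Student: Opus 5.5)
The statement to prove is \Cref{levi decomp}: $G_{iR}U^{\varepsilon}_R = U^{\varepsilon}_R G_{iR}$, and likewise for $B^{\varepsilon}_R$. By symmetry it suffices to treat $\varepsilon = +$. The idea is to split $U^+_R$ as a semidirect product
\[
U^+_R = U_{\alpha_i R} \ltimes U^{+,i}_R, \qquad U^{+,i}_R := \langle U_{\beta R} \mid \beta \in \Phi^+\setminus\{\alpha_i\}\rangle,
\]
and to show that $G_{iR}$ normalises $U^{+,i}_R$. Granting this, the statement follows at once:
\[
G_{iR}U^+_R = G_{iR}U_{\alpha_i R}U^{+,i}_R = G_{iR}U^{+,i}_R = U^{+,i}_R G_{iR} = U^+_R G_{iR},
\]
using that $U_{\alpha_i R} \subseteq G_{iR}$. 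The Borel version then follows because $T_R$ normalises both $G_{iR}$ and $U^+_R$, so $G_{iR}B^+_R = G_{iR}U^+_RT_R = U^+_RG_{iR}T_R = U^+_RT_RG_{iR}$.

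To see that $G_{iR}$ normalises $U^{+,i}_R$, it is enough to check that the generators $x_{\pm\alpha_i}(r)$ of $G_{iR}$ conjugate each $U_{\beta R}$, for $\beta \in \Phi^+\setminus\{\alpha_i\}$, into $U^{+,i}_R$.

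\begin{itemize}
\item[(a)] Conjugation by $x_{\alpha_i}(r)$. The pair $\{\alpha_i,\beta\}$ is prenilpotent, so the commutation relations \eqref{commutation KM min} apply. They produce only roots in $]\alpha_i,\beta[_{\N}$, and these are positive and different from $\alpha_i$.
\item[(b)] Conjugation by $x_{-\alpha_i}(r)$. This is the delicate case, since $\{-\alpha_i,\beta\}$ need not be prenilpotent for general $\beta$.
\end{itemize}

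For (b), I would use the elements $\tilde s_i = x_{\alpha_i}(1)x_{-\alpha_i}(1)x_{\alpha_i}(1)$ of $G_{iR}$. Conjugation by $\tilde s_i$ maps $U_{\beta}$ onto $U_{s_i\beta}$, up to a sign in the parametrisation, and $s_i$ permutes $\Phi^+\setminus\{\alpha_i\}$. Moreover $x_{-\alpha_i}(r) = \tilde s_i\, x_{\alpha_i}(\pm r)\, \tilde s_i^{-1}$. Hence conjugation by $x_{-\alpha_i}(r)$ is a composite of conjugation by $\tilde s_i^{\pm1}$ and by some $x_{\alpha_i}(\cdot)$, and each of these preserves $U^{+,i}_R$: the first because $s_i$ permutes $\Phi^+\setminus\{\alpha_i\}$, the second by (a). This settles the case $\varepsilon = +$ in full generality, without any hypothesis on $R$.

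The main obstacle is the justification that $U^+_R$ really is the product $U_{\alpha_i R}U^{+,i}_R$, and, related to this, that the conjugation formulas hold in $G_R$ itself rather than only in the image $\mfG_A(\K)$. Over a ring, $U^+_R$ is defined merely as a subgroup generated by root groups, so one cannot use a matrix realisation. The product decomposition follows once $U_{\alpha_i R}$ normalises $U^{+,i}_R$, which is exactly (a), so the genuinely delicate input is the action of the $\tilde s_i$ on root groups. I expect this is where the $2$-spherical and \eqref{co} hypotheses enter in the source \cite{MMinject}. They are needed to ensure that the relevant subgroups are generated by the rank-$2$ root groups and behave as expected, for instance that $U^+_R$ is generated by the rank-$2$ subgroups $U_{i,j}$, so that each positive root group lies in a $\tilde s_i$-stable rank-$2$ configuration. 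If needed, I would reduce to checking normalisation on the subgroups $\langle U_{\alpha_i},U_{\alpha_j}\rangle$, which are finite-type rank-$2$ groups. There the required identities are explicit Chevalley-group computations, and the conditions in \eqref{co} ensure that these rank-$2$ groups are generated as claimed.
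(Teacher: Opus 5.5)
Your argument works when $A$ is spherical. There the longest element of $\mc{W}$ makes every positive root negative, so every pair $\{\alpha_i,\beta\}$ with $\beta\in\Phi^+\setminus\{\alpha_i\}$ is prenilpotent. Then (a) follows from \eqref{commutation KM min}, and (b) follows from (a) together with $\tilde s_iU_\beta\tilde s_i^{-1}=U_{s_i\beta}$. That last relation is part of the structure of $G_R$; it is not the delicate point, as your last paragraph suggests.

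For non-spherical $A$, however, the claim in (a) that $\{\alpha_i,\beta\}$ is prenilpotent is false, and the proof breaks there. Take $A$ of affine type $\tilde A_2$, which is $2$-spherical and simply laced, with $I=\{0,1,2\}$ and $\delta=\alpha_0+\alpha_1+\alpha_2$. Then $\beta:=s_2\alpha_0=\alpha_0+\alpha_2=\delta-\alpha_1$ is a positive real root different from $\alpha_1$. Since $\mc{W}$ fixes $\delta$, we have $w\alpha_1+w\beta=\delta$ for every $w\in\mc{W}$. So $w\alpha_1$ and $w\beta$ are never both negative, and $\{\alpha_1,\beta\}$ is not prenilpotent. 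Then \eqref{commutation KM min} gives no relation for this pair, and nothing in your argument places $x_{\alpha_1}(r)x_\beta(s)x_{\alpha_1}(-r)$ in $U^{+,1}_R$. So the claim that $U_{\alpha_i R}$ normalises $U^{+,i}_R$ is unproved. Since you reduce (b) to (a), normalisation by $G_{iR}$ is unproved as well. Your conclusion that the argument needs no hypothesis on $R$ should have been a warning sign, since \eqref{co} appears in the statement.

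The repair is to use only rank-$2$ data. Set $N:=\langle U_{\gamma R}\mid \gamma\in\Phi^+(\{i,j\})\setminus\{\alpha_i\},\ j\in I\setminus\{i\}\rangle$. Because $m_{ij}<\infty$, each $\Phi(\{i,j\})$ is finite. Hence every pair $\{\alpha_i,\gamma\}$ occurring here is prenilpotent, with $]\alpha_i,\gamma[_{\N}\subseteq\Phi^+(\{i,j\})\setminus\{\alpha_i\}$. Also, $s_i$ permutes $\Phi^+(\{i,j\})\setminus\{\alpha_i\}$. Therefore $U_{\alpha_iR}$ and $\tilde s_i^{\pm1}$ normalise $N$, and so do $U_{-\alpha_iR}=\tilde s_iU_{\alpha_iR}\tilde s_i^{-1}$ and $G_{iR}$.

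The hypotheses enter in exactly one remaining step. By \Cref{generated by simples 2 spherical}, which needs $2$-sphericity and \eqref{co}, we get $U^+_R=\langle U_{\alpha_jR}\mid j\in I\rangle=\langle U_{\alpha_iR},N\rangle=U_{\alpha_iR}N$. Your chain of equalities then goes through with $N$ in place of $U^{+,i}_R$. Neither the semidirect product (trivial intersection) nor any generation statement for the rank-$2$ groups is needed.

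Combined with your argument in the spherical case, this explains why \eqref{co} is imposed only for non-spherical $A$. Your last paragraph points vaguely toward a rank-$2$ reduction, but it does not identify (a) as the failing step.
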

The result below will be essential for our construction of filtrations fitting the requirements of the work in \cite{DevillersMu}.
\begin{lemma}[{\cite[Proposition 4.2]{MMinject}}]
\label{generated by simples 2 spherical}
Let $A$ be a $2$-spherical GCM and suppose that $R$ satisfies \eqref{co}. Then $U^{\pm}_R = \langle U_{\pm \alpha_i R} \mid i \in I\rangle$. In particular, $G_R$ is generated by its subgroups $G_{iR}$ and $T_R$, for $i \in I$.
\end{lemma}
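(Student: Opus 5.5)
We argue by induction on the height of positive real roots, reducing at each step to a statement about rank $2$ subsystems; the negative case is symmetric (apply the Cartan--Chevalley involution, or repeat the argument verbatim). Let $V := \langle U_{\alpha_i R} \mid i \in I\rangle \leqslant U^+_R$. It suffices to show $U_{\alpha R} \leqslant V$ for every $\alpha \in \Phi^+$. For the last claim, $G_R$ is generated by $T_R$ and all root groups $U_{\alpha R}$, $\alpha\in\Phi$. Granting the first part, these root groups lie in $\langle U_{\pm\alpha_i R}\mid i \in I\rangle \leqslant \langle G_{iR} \mid i\in I\rangle$, which gives the second statement.

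\emph{Reduction to rank $2$.} Let $\alpha \in \Phi^+$ be non-simple, and write $\alpha = w\alpha_j$ with $w \in \mc{W}$ of minimal length. Then $w \neq 1$, so $w = w's_k$ with $\ell(w') < \ell(w)$, and $k \neq j$ by minimality (since $s_j\alpha_j = -\alpha_j$). Put $J = \{j,k\}$ and decompose $w = v u$ with $v = w_J$ right $J$-reduced and $u \in W_J$. Since $v$ is right $J$-reduced, \Cref{length ws implies root positive} gives $v\alpha_j, v\alpha_k \in \Phi^+$, and more generally $v(\Phi^+(J)) \subseteq \Phi^+$.

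Moreover $\alpha = v(u\alpha_j)$ with $u\alpha_j \in \Phi^+(J)$. The roots $v\alpha_j$ and $v\alpha_k$ differ from $\alpha$, since otherwise $\alpha$ would be expressible via a shorter element than $w$. Hence they have strictly smaller height than $\alpha$: the roots $v\gamma$ for $\gamma \in \Phi^+(J)$ are positive integer combinations of $v\alpha_j, v\alpha_k$, and the height is additive. By induction, $U_{v\alpha_j R}, U_{v\alpha_k R} \leqslant V$. The group generated by $U_{v\gamma R}$ for $\gamma \in \Phi^+(J)$ is a copy (via conjugation by a lift $\tilde v$ of $v$ in $G_R$) of the group $U^+_{JR}$ of the rank $2$ Chevalley group of type $A_J$, which is spherical because $A$ is $2$-spherical. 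So it suffices to prove the rank $2$ statement: $U^+_{JR} = \langle U_{\alpha_j R}, U_{\alpha_k R}\rangle$.

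\emph{The rank $2$ case.} Type $A_1\times A_1$ is trivial. In type $A_2$, the relation $[x_{\alpha_j}(r), x_{\alpha_k}(1)] = x_{\alpha_j+\alpha_k}(\pm r)$ gives the result directly. For $B_2$, the commutator $[x_a(r),x_b(s)]$ produces $x_{a+b}(\pm rs)\,x_{2a+b}(\pm r^2 s)$. The long root group is then obtained from $[x_{a+b}(\cdot),x_a(\cdot)]$, provided the short-root group $x_{a+b}$ has been isolated. To separate the two factors, I would conjugate by torus elements $h(t)$ with $t \in R^\times$; this is legitimate here because $T_R$ is realised inside $\langle U_{\pm\alpha_i}\rangle$-type rank $1$ subgroups, but for the positive part we only need the commutator identities with different scalars. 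Comparing $[x_a(tr),x_b(s)]$ with $[x_a(r),x_b(ts)]$ yields $x_{2a+b}(\pm (t^2-t) r^2 s)$. Condition \eqref{co} (no quotient $\F_2$) supplies a unit $t$ with $t^2-t$ a unit, in the spirit of \Cref{exists element a+1 a2+1}, and since $r^2 s$ ranges over all of $R$ when $r=1$, this suffices. The $G_2$ case is analogous but longer: one successively isolates the root groups of heights $2,3,4,5$ using commutators with suitably scaled arguments. This requires units $t$ with $t^2-t$ and $t^3-t$ (or similar Vandermonde-type expressions) invertible, which is exactly where excluding quotients $\F_2$ and $\F_3$ is used.

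The main obstacle is this rank $2$ bookkeeping for $B_2$ and especially $G_2$. There the nonlinear terms in the commutator formulas \eqref{commutation KM min} must be disentangled using only units of $R$. The reduction to rank $2$ via minimal-length expressions $\alpha = w\alpha_j$ is standard Coxeter-group combinatorics.
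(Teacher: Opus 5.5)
\textbf{There is a genuine gap in your rank-$2$ base case.}

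Your reduction to rank $2$ is sound, and it is the natural route. The rank-$2$ input is exactly what the paper attributes to \cite[Lemma 11.1]{Allcock} in \Cref{rk generated by 2 simples}. One small correction: $k\neq j$ follows from $\alpha\in\Phi^+$ and \Cref{length ws implies root positive}, not from minimality.

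The problem is your treatment of $B_2$ and $G_2$. The lemma is stated for an arbitrary commutative unital ring satisfying \eqref{co}. For such rings, \eqref{co} does \emph{not} provide a unit $t$ with $t^2-t$ (let alone $t^3-t$) invertible; \Cref{exists element a+1 a2+1} only applies to semi-local rings. For instance, $R=\Z[\tfrac{1+\sqrt{-11}}{2}]$ has no quotient $\F_2$, since $2$ is inert with residue field $\F_4$. Yet $R^\times=\{\pm1\}$, so $t(t-1)$ is never a unit. As written, your argument therefore only covers (semi-)local rings.

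\textbf{Repair for $B_2$.} Your identity $[x_a(t),x_b(s)]\,[x_a(1),x_b(ts)]^{-1}=x_{2a+b}(\pm(t^2-t)s)$ holds for \emph{every} $t\in R$, so no unit is needed. Moreover, $R$ has no quotient $\F_2$ if and only if the ideal generated by $\{t^2-t\mid t\in R\}$ is all of $R$: the quotient by this ideal is a Boolean ring, and a nonzero Boolean ring surjects onto $\F_2$. Write $1=\sum_i c_i(t_i^2-t_i)$ and multiply the elements obtained for $s=c_is_0$; this gives all of $U_{2a+b}$. Then $U_{a+b}$ follows from $[x_a(1),x_b(s)]$. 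Drop your remark that the long root group comes from $[x_{a+b}(\cdot),x_a(\cdot)]$: that commutator only yields $x_{2a+b}(2R)$.

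\textbf{The $G_2$ case.} ``Analogous but longer'' is not an argument. The root groups must be isolated from the top down, starting with the central $U_{3a+2b}$, not by increasing height. Here is one way to do it. Set $H=\langle U_a,U_b\rangle$ and $h_t(r,s):=[x_a(tr),x_b(s)]\,[x_a(r),x_b(ts)]^{-1}\in H$. Modulo the central subgroup $U_{3a+2b}$, the non-simple root groups commute, and
$h_t(r,s)\equiv x_{2a+b}(\pm(t^2-t)r^2s)\,x_{3a+b}(\pm(t^3-t)r^3s)$.
\begin{enumerate}
\item Take $r=1$ and commute with $x_b(u)$. This kills everything except $x_{3a+2b}(\pm u(t^3-t)s)$. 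The ideal generated by $\{t^3-t\}$ is $R$ exactly when $R$ has no quotient $\F_2$ or $\F_3$, since a residue field in which $x^3=x$ identically has at most $3$ elements. Hence $U_{3a+2b}\le H$.
\item Next, $h_t(\lambda,s)\,h_t(1,\lambda^2s)^{-1}\equiv x_{3a+b}(\pm(t^3-t)(\lambda^3-\lambda^2)s)$. These coefficients generate the product of the ideals generated by $t^3-t$ and by $\lambda^2(\lambda-1)$. Both ideals equal $R$ under \eqref{co}, so $U_{3a+b}\le H$.
\item Then $U_{2a+b}\le H$ follows as in $B_2$.
\item Finally, $U_{a+b}\le H$.
\end{enumerate}
With these repairs, or by simply citing Allcock's lemma for the rank-$2$ case, your proof goes through.
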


For any $i \in I$, we define $\tilde{s}_i := x_{\alpha_i}(1) x_{-\alpha_i}(1) x_{\alpha_i}(1)$. One can prove (see \cite[Proposition 7.57]{MarquisKMGbook}) that for any $w \in  \mc{W}$ with reduced expression $(s_{i_1}, \ldots, s_{i_n})$, the element $\tilde{w} := \tilde{s}_{i_1} \ldots \tilde{s}_{i_n} \in G_R$ depends only on $w$ and not on the chosen expression $(s_{i_1}, \ldots, s_{i_n})$. In particular, the subgroup $N_R := \langle \tilde{s}_i , T_R \mid i \in I\rangle \leqslant G_R$ surjects onto $\mc{W}$, with kernel $T_R$. One can also show that when $\K$ is a field, $P^{\pm}_{J \K} = B_{\K}^{\pm} \mc{W}_J B_{\K}^{\pm}$, see \cite[§B.3]{MarquisKMGbook}. Moreover, for any $w \in \mc{W}$ and any $\alpha \in \Phi$, \[\tilde{w} U_{\alpha } \tilde{w}^{-1} =  U_{w   \alpha }. \]

\begin{lemma}
\label{conjugation of UJ+ by J reduced}
Let $A$ be a GCM with Weyl group $\mc{W}$ and with real roots $\Phi = \Phi^+ \sqcup \Phi^-$. Let $J \subseteq I$ and let $w\in \mc{W}$ be right $J$-reduced. Then 
\[\tilde{w} U_{JR}^+ \tilde{w}^{-1} \subseteq B^+_{R}.\]
\end{lemma}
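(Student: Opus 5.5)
Since $U^+_{JR}$ is generated by the root groups $U_{\alpha R}$ with $\alpha \in \Phi^+(J)$, and conjugation by $\tilde{w}$ is a group automorphism of $G_R$, it suffices to treat each generating root group separately. We use the relation $\tilde{w} U_{\alpha} \tilde{w}^{-1} = U_{w\alpha}$ recalled above. Then
\[\tilde{w} U_{JR}^+ \tilde{w}^{-1} = \langle U_{w\alpha R} \mid \alpha \in \Phi^+(J)\rangle,\]
and this is contained in $U^+_R \subseteq B^+_R$ as soon as $w\alpha \in \Phi^+$ for every $\alpha \in \Phi^+(J)$. So the whole statement reduces to a root-theoretic claim: if $w$ is right $J$-reduced, then $w(\Phi^+(J)) \subseteq \Phi^+$.

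To prove this claim, let $\alpha \in \Phi^+(J)$. The first step is to write $\alpha = v(\alpha_j)$ with $v \in \mc{W}_J$ and $j \in J$. This is the standard fact that the real roots of the subsystem $A_J$, namely $\mc{W}_J \cdot \{\alpha_j \mid j\in J\}$, are exactly $\Phi \cap \bigoplus_{j\in J}\Z\alpha_j$. One inclusion is clear. For the other, given a positive root supported on $J$, one proceeds by induction on its height: some $s_j$ with $j \in J$ lowers the height, and $s_j$ preserves the support in $J$. The step that some $s_j$ with $j\in J$ does so is the subtle part; if needed, I would instead cite \cite[Chapter 2]{AbramenkoBrown} or Kac's book for parabolic root subsystems. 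Since $\alpha$ is positive, we may moreover choose $v$ with $\ell(vs_j) > \ell(v)$ in $\mc{W}_J$, by \Cref{length ws implies root positive} applied to the Coxeter system $(\mc{W}_J, \{s_j\}_{j\in J})$.

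Next, \Cref{length ws implies root positive} reduces the positivity of $w\alpha = (wv)(\alpha_j)$ to showing $\ell(wvs_j) > \ell(wv)$. Since $w$ is right $J$-reduced, $w = w_J$ is the minimal element of its coset $w\mc{W}_J$. By \cite[Proposition 2.20]{AbramenkoBrown} as recalled above, every $w' \in w\mc{W}_J$ then satisfies $\ell(w') = \ell(w) + \ell(w^{-1}w')$. Applying this to $w' = wv$ and $w' = wvs_j$ gives
\[\ell(wvs_j) = \ell(w) + \ell(vs_j) > \ell(w) + \ell(v) = \ell(wv).\]
Here we use that the length of an element of $\mc{W}_J$ computed in $\mc{W}_J$ agrees with its length in $\mc{W}$, a standard fact. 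Hence $w\alpha \in \Phi^+$, and so $U_{w\alpha R} \subseteq U^+_R \subseteq B^+_R$, which concludes the proof.

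The main obstacle is the identification $\Phi^+(J) = \mc{W}_J\cdot\{\alpha_j \mid j\in J\} \cap \Phi^+$. Everything else is formal.
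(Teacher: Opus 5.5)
Your proof is correct, but the work sits in a different place than in the paper. The paper conjugates only the simple root groups. It asserts that $\tilde{w}U^+_{JR}\tilde{w}^{-1}$ is generated by the $x_{w\alpha_j}(\lambda)$ with $j\in J$, so the only input it needs is $w\alpha_j\in\Phi^+$, which is \Cref{length ws implies root positive} applied to the right $J$-reduced element $w$. That step tacitly uses $U^+_{JR}=\langle U_{\alpha_j R}\mid j\in J\rangle$. This is not the definition of $U^+_{JR}$, and it can fail for a general GCM or ring, for instance when $A_J$ is not $2$-spherical. It does hold under $2$-sphericity and \eqref{co}, via \Cref{generated by simples 2 spherical} applied to $A_J$.

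You instead work from the definition $U^+_{JR}=\langle U_{\alpha R}\mid \alpha\in\Phi^+(J)\rangle$, so your argument proves the lemma in the stated generality. The price is the root-theoretic claim $w(\Phi^+(J))\subseteq\Phi^+$. Your derivation of it is sound: write $\alpha$ as a $\mc{W}_J$-translate of a simple root, then use length additivity on $w\mc{W}_J$. You do not actually need to choose $v$, since $v\alpha_j=\alpha\in\Phi^+$ already forces $\ell(vs_j)>\ell(v)$ by \Cref{length ws implies root positive} in $\mc{W}$ itself.

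More usefully, the step you call the main obstacle can be skipped. $\mc{W}$ acts linearly on $Q$, so write $\alpha=\sum_{j\in J}c_j\alpha_j$ with all $c_j\in\N$. Then $w\alpha=\sum_{j\in J}c_j\,w\alpha_j$ is a real root and a non-negative combination of the positive roots $w\alpha_j$. Hence $w\alpha\in\Phi\cap\bigoplus_{i\in I}\N\alpha_i=\Phi^+$. This gives your level of generality with no more effort than the paper's two-line argument.
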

\begin{proof}
The conjugate $\tilde{w} U^+_{J R}\tilde{w}^{-1}$ is generated by $x_{w \alpha_j}(\lambda)$ for $j \in J$ and $\lambda \in R$. Then by \Cref{length ws implies root positive}, $w \alpha_j \in \Phi^+$ for any $j \in J$, since $w$ is right $J$-reduced. It follows that $\tilde{w} U_{J R}^+ \tilde{w}^{-1} \subseteq B^+_{R}$.  
\end{proof}

An important feature of the theory of Kac--Moody groups over fields is the fact that they have an associated (twin) $BN$-pair, and hence act on an associated (twin) building (see also \cref{sect buildings}). In particular, for $\K$ a field and $\varepsilon \in \{\pm\}$, $G_{\K}$ has \emph{Bruhat decompositions}
\begin{equation}
G_{\K} = \coprod_{w \in \mc{W}} B^{\varepsilon}_{\K} \tilde{w} B^{\varepsilon}_{\K},
\end{equation}
and \emph{Birkhoff decompositions}
\begin{equation}
G_{\K} = \coprod_{w \in \mc{W}} B^{-\varepsilon}_{\K} \tilde{w} B_{\K}^{\varepsilon} = \coprod_{w \in \mc{W}} U^{-\varepsilon}_{\K} \tilde{w} B^{\varepsilon}_{\K},
\end{equation}
see \cite[Corollary 7.70 \textit{\&} §B.3]{MarquisKMGbook}. 

In particular, from the Bruhat decompositions it follows that
\[\SL_2(\K) = B^{\varepsilon}_2(\K) \sqcup B^{\varepsilon}_{2}(\K) \begin{psmallmatrix}
0 & 1 \\ -1 & 0 
\end{psmallmatrix} B^{\varepsilon}_2(\K),\]
for any field $\K$ and $\varepsilon \in \{\pm\}$. The double cosets in the Bruhat and Birkhoff decompositions satisfy the following properties (see \cite[Proposition 6.36 \textit{\&} Proposition 6.77]{AbramenkoBrown}). Let $B^{\varepsilon} := B^{\varepsilon}_{\K}$ and $U^{\varepsilon} := U^{\varepsilon}_{\K}$, and write moreover $w$ for $\tilde{w} \in G_{\K}$.
\begin{align}
B^{\varepsilon} w B^{\varepsilon} s B^{\varepsilon} & \subseteq B^{\varepsilon} w B^{\varepsilon} \cup B^{\varepsilon} ws B^{\varepsilon}, \quad \text{ and } \notag\\
 B^{\varepsilon} w B^{\varepsilon} s B^{\varepsilon} & = B^{\varepsilon} ws B^{\varepsilon} \iff \ell(ws) = \ell(w)+1, \label{product Bruhat classes}
\end{align}
and 
\begin{align}
U^{-\varepsilon} w B^{\varepsilon} s B^{\varepsilon} & \subseteq U^{-\varepsilon} w B^{\varepsilon} \cup U^{-\varepsilon} ws B^{\varepsilon}, \quad \text{ and } \notag\\
U^{-\varepsilon} w B^{\varepsilon} s B^{\varepsilon} & = U^{-\varepsilon} ws B^{\varepsilon} \iff \ell(ws) = \ell(w)-1, \label{product Birkhoff classes}
\end{align}

Finally, $G_R$ possesses a \emph{Cartan--Chevalley involution} $\omega_R \in \Aut(G_R)$ which preserves $T_R$ and all $\tilde{s}_i$, and which sends $U_{\alpha R}$ to $U_{-\alpha R}$ for all $\alpha \in \Phi$.

\subsubsection*{\texorpdfstring{The groups $\mfG^{\min}_{\mc{D}}(R)$}{The groups Gmin(R)}}
Throughout this section, we assume that $R$ is a local ring. For $A = (A_{ij})_{i,j \in I}$ a GCM, there is also a more involved notion of the \emph{(minimal) Tits functor $\mfG^{\min}_{A}$}, as defined in \cite[Definition 8.126]{MarquisKMGbook}.  
It has the property that for any $i \in I$, there is a group functor morphism $\varphi_i \colon \SL_2 \to \mfG^{\min}_A$, and moreover there is a group functor morphism 
\[\varphi \colon \mfG_A \to \mfG^{\min}_A. \]
We write $\varphi_{iR} \colon \SL_2(R) \to \mfG^{\min}_A(R)$ and $\varphi_R \colon \mfG_A(R) \to \mfG^{\min}_A(R)$ for the respective group homomorphisms naturally induced by $\varphi_i$ and $\varphi$. Then for any $r \in R$, and any $i \in I$, the map $\varphi_R$ has the property that
\[ \varphi_{iR}\begin{psmallmatrix} 1 & r \\ 0 & 1 \end{psmallmatrix} = \varphi_R  (x_{\alpha_i}(r)) \quad \text{ and } \quad \varphi_{iR}\begin{psmallmatrix} 1 & 0 \\ -r & 1\end{psmallmatrix} = \varphi_{R}(x_{-\alpha_i}(r)). \]
In particular, 
\[\ov{G}_{iR} \cong  \left\langle \begin{psmallmatrix} 1 & r \\ 0 & 1 \end{psmallmatrix}, \begin{psmallmatrix} 1 & 0 \\ -r & 1\end{psmallmatrix} \mid r \in (R,+)\right\rangle \cong \SL_2(R),\]
where the latter isomorphism holds because $R$ is a local ring.

Additionally, for any injective ring morphism $R_1 \to R_2$, the induced group homomorphism $\mfG_{A}^{\min}(R_1) \to \mfG_{A}^{\min}(R_2)$ is injective. 

Since $R$ is a local ring, $\mfG_{A}^{\min}(R)$ is the surjective image $\varphi_R(\mfG_{A}(R))$ of $\mfG_{A}(R)$, and the work in \cite{MMinject} is dedicated to investigating when $\varphi_R$ is injective, i.e.\ to when the groups $\mfG_{A}(R)$ and $\mfG_{A}^{\min}(R)$ coincide. This was already known by Tits' work in \cite{Tits87} to be the case when $R = \K$ is a field, and is moreover proven to be the case when $R$ satisfies (\ref{co}) and is a \emph{valuation ring}, i.e.\ a local ring which is moreover a Bézout domain in \cite[Theorem A]{MMinject}.

When $R$ is a domain with field of fractions $\K$, there is a natural map \[\psi \colon \mfG_{A}(R) \to \mfG_{A}(\K),\] induced by the canonical inclusions $\mfU_{\alpha}(R) \into \mfU_{\alpha}(\K)$, for all $\alpha \in \Phi$.

Since $\mfG_{A}(\K) \cong \mfG_{A}^{\min}(\K)$, the group $\mfG^{\min}_{A}(R)$ injects into $\mfG_{A}(\K)$, and under this identification, the map \[\varphi_R \colon \mfG_{A}(R) \to \mfG_{A}^{\min}(R) \subseteq \mfG_{A}(\K)\] coincides with $\psi$, so that $\mfG_{A}^{\min}(R)$ is just the image of $\mfG_{A}(R)$ in $\mfG_{A}(\K)$ induced by the inclusion $R \into \K$.

Throughout the rest of this article, we will write $G^{\min}_R := \mfG_{A}^{\min}(R)$. Moreover, for a subgroup $H \leqslant G_R$, we will denote by $\ov{H}$ its image under $\varphi_R$ in $G^{\min}_R$, e.g.\ $\ov{U}^{\pm}_R := \varphi_R(U^{\pm}_R)$. 

Note that $\varphi_R$ preserves $T_R$ as well as the elements $\tilde{s}_i$ for all $i \in I$, and the restriction of $\varphi_R$ to any root subgroup $U_{\alpha}$ is injective. In particular, we also write $T_R$ $, \tilde{s}_i$ and $U_{\alpha}$ for their respective images under $\varphi_R$.

\subsection{\texorpdfstring{Some facts about $G_R$ and $G^{\min}_R$}{Some facts about G(R) and Gmin(R)}}

Unless otherwise stated, in this article we let $R$ be a local ring and denote by $R \to k$ the projection onto its residue field $k := R/L$, where $L$ is the maximal ideal of $R$. We then write 
\begin{equation}
\label{definition projection onto residue field}
p \colon G^{\min}_R \to G^{\min}_k \cong G_k
\end{equation}
for the group homomorphism induced by functoriality.

If we denote respectively by $\ov{U}^{\pm}_L$ and $\ov{B}^{\pm}_L$ the kernels of \[p\hspace{-1pt}\mid_{\ov{U}^{\pm}_R} \colon \ov{U}^{\pm}_R \to U^{\pm}_k \quad \text{  and } \quad p\hspace{-1pt} \mid_{\ov{B}^{\pm}_R} \colon \ov{B}^{\pm}_R \to B^{\pm}_k,\] one has the following.
\begin{lemma}
\label{Kernel p}
Let $R$ and $p \colon G^{\min}_R \to G_k$ be as above. Then
\begin{equation}
\Ker(p) = \ov{U}^-_L \ov{B}^+_L = \ov{U}^+_L \ov{B}^-_L.
\end{equation}
\end{lemma}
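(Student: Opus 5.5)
We will prove $\Ker(p) = \ov{U}^-_L \ov{B}^+_L$; the other equality then follows by applying the Cartan--Chevalley involution $\omega_R$, which commutes with $p$ (by functoriality), preserves $T_R$, and exchanges $U_{\alpha}$ with $U_{-\alpha}$, hence exchanges $\ov{U}^{\pm}_L$ and $\ov{B}^{\pm}_L$ with their opposites. The inclusion $\ov{U}^-_L \ov{B}^+_L \subseteq \Ker(p)$ is immediate from the definitions, so the content is the reverse inclusion.

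The strategy is to first show that $N := \ov{U}^-_L \ov{B}^+_L$ is a normal subgroup of $G^{\min}_R$, and then show that every element of $\Ker(p)$ lies in it. For the second part, take $g \in \Ker(p)$ and write $g = \varphi_R(h)$. The key tool is a "big cell" argument: $G^{\min}_R$ is generated by $\ov{U}^{\pm}_R$ and $T_R$ (by \Cref{generated by simples 2 spherical} and surjectivity of $\varphi_R$), so $g$ is a product of elements of $\ov{U}^-_R$, $\ov{U}^+_R$ and $T_R$. Modulo the normal subgroup $N$, we want to rewrite $g$ as $u^- b^+$ with $u^- \in \ov{U}^-_R$ and $b^+ \in \ov{B}^+_R$. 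Once $g \in N u^- b^+$ with $p(g) = 1$, we get $p(u^-) p(b^+) = 1$ in $G_k$. Then $p(u^-) \in U^-_k \cap B^+_k = \{1\}$ by the Birkhoff decomposition over the field $k$ (the cell for $w=1$ is a direct product). Thus $u^- \in \ov{U}^-_L$ and $b^+ \in \ov{B}^+_L$, giving $g \in N$.

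So the real work is the following. First, normality of $N$: we check that $N$ is stable under conjugation by generators $x_{\pm\alpha_i}(r)$ and $T_R$. $T_R$ and $\ov{U}^{\pm}_R$ normalise $\ov{U}^{\pm}_L$ and $\ov{B}^{\pm}_L$ (these are kernels of restrictions of $p$). The essential case is conjugating $N$ by $x_{-\alpha_i}(r)$ and $x_{\alpha_i}(r)$, which reduces via $\ov{G}_{iR}\cong\SL_2(R)$ and \Cref{levi decomp} (writing $\ov{U}^{\pm}_R = U_{\pm\alpha_i}\cdot(\text{part normalised by } \ov{G}_{iR})$) to an $\SL_2(R)$ computation. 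This computation is that the congruence kernel $\Ker(\SL_2(R)\to\SL_2(k))$ equals $U^-_2(L)B^+_2(L)$. This holds because a matrix $\begin{psmallmatrix} a & b\\ c& d\end{psmallmatrix}$ with $a\in 1+L$ is invertible in $a$, so the matrix factors as lower unipotent times upper triangular, with entries in $L$ off the identity.

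Second, we need the "big cell up to $N$" statement: that $\ov{U}^-_R\ov{B}^+_R N$ is all of $G^{\min}_R$ intersected with $p^{-1}(U^-_k B^+_k)$, which suffices since $p(g)=1$. We would argue by induction on word length in generators. Multiplying $u^-b^+$ on the right by $x_{-\alpha_i}(r)$, we use the $\SL_2(R)$ case: if $p$ of the product still lies in the big cell of $G_k$, the relevant $\SL_2$-entry is a unit, so we can refactor inside $\ov{G}_{iR}$ (with \Cref{levi decomp}). We expect this refactoring, and keeping track of which intermediate products land in the big cell, to be the main obstacle. To sidestep tracking intermediate words, we would instead use the Birkhoff decomposition of $G_k$ and lift: pick $g\in\Ker(p)$ and use \Cref{Kernel p}-style induction on the Birkhoff length of partial products, with normality of $N$ doing the bookkeeping.
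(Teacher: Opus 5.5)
The reduction to $\Ker(p)\subseteq \ov{U}^-_L\ov{B}^+_L$ is fine. So is your last step: if $g=u^-b^+n$ with $p(g)=1$, then $p(u^-)\in U^-_kB^+_k\cap\ldots$, more precisely $p(u^-)\in U^-_k\cap B^+_k=\{1\}$. But the step that carries all the content is not proved. Once $N$ is known to be normal, $\ov{U}^-_R\ov{B}^+_R N=\ov{U}^-_R\ov{B}^+_R$. Your ``big cell up to $N$'' claim is therefore essentially $\Ker(p)\subseteq\ov{U}^-_L\ov{B}^+_R$. That is exactly the hard input the paper imports from \cite[Proposition 5.3]{MMinject}.

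After that import the paper finishes in two lines. Writing $g=u_-b_+$ forces $b_+=u_-^{-1}g\in\Ker(p)\cap\ov{B}^+_R=\ov{B}^+_L$. The second equality then follows from $\Ker(p)^{-1}=\Ker(p)$ and the fact that $T_R$ normalises $\ov{U}^+_L$, so no Cartan--Chevalley involution is needed.

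Your induction on word length cannot establish this inclusion. A word representing an element of $\Ker(p)$ typically has partial products whose images lie outside $U^-_kB^+_k$, for instance one beginning with $\tilde{s}_i$. So there is no invariant to carry through the induction. Your fallback is circular in two ways. It explicitly invokes the lemma being proved. And the Birkhoff decomposition of $G^{\min}_R$ (\Cref{uniqueness birkhoff classes}, \Cref{thm birkhoff}) is itself derived from this lemma. Going that route would require proving a decomposition $G^{\min}_R=\bigcup_w\ov{U}^-_R\tilde{w}\ov{B}^+_RN$ from scratch, which you have not done.

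Your first step already suffices, and the second can be dropped. The argument runs as follows.
\begin{itemize}
\item Let $M$ be the normal closure of all $x_\alpha(a)$ with $a\in L$ together with $T_L:=\Ker(T_R\to T_k)$.
\item $G_k=\mfG_A(k)$ is defined by a presentation whose relations are universal formulas in parameters from $k$ and $k^\times$. Since $R$ is local, $R\to k$ and $R^\times\to k^\times$ are onto. Lifting generators therefore gives a homomorphism $G_k\to G_R/M$ inverse to the natural map $G_R/M\to G_k$.
\item The map $p\circ\varphi_R$ is the functorial map followed by the isomorphism $\varphi_k$, and $\varphi_R$ is onto. Hence $\Ker(p)=\varphi_R(M)$.
\item All these generators of $M$ lie in $N$. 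So conjugation-stability of $N$ alone gives $\Ker(p)\subseteq N$.
\end{itemize}

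For that stability, \Cref{levi decomp} as stated is not enough, but the decomposition you want is elementary.
\begin{itemize}
\item Set $\ov{U}^-_{(i)}:=\langle U_\beta\mid\beta\in\Phi^-\setminus\{-\alpha_i\}\rangle$. It is normalised by $U_{-\alpha_i}$, by the commutator relations. It is normalised by $\tilde{s}_i$, since $s_i$ permutes $\Phi^-\setminus\{-\alpha_i\}$. Hence it is normalised by $\ov{G}_{iR}=\langle U_{-\alpha_i},\tilde{s}_i\rangle$, and $\ov{U}^-_R=U_{-\alpha_i}\ov{U}^-_{(i)}$.
\item In $G_k$ we have $U_{-\alpha_i}\cap U^-_{(i)k}=\{1\}$. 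So both factors of an element of $\ov{U}^-_L$ lie in $\Ker(p)$.
\item Your $\SL_2(R)$ computation then handles the rest. The case of $\ov{U}^+_L$ is symmetric, and $x_{-\alpha_i}(r)$ conjugates $T_L$ into $\ov{U}^-_LT_L$.
\end{itemize}

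Finally, $N$ is not a priori a subgroup, since $N^{-1}=\ov{B}^+_L\ov{U}^-_L$. It becomes one because the set $N$ is stable under conjugation by a generating set closed under inverses. You should say this explicitly rather than treating $N$ as a normal subgroup from the outset.
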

\begin{proof}
By \cite[Proposition 5.3]{MMinject} and surjectivity of $\varphi_R$, one has that $\Ker(p) \subseteq \ov{U}_L^- \ov{B}^+_R$. If $g = u_- b_+ \in \Ker(p)$ with $u_-\in \ov{U}^-_L$ and $b_+ \in \ov{B}^+_R$, then $u^{-1}_- g \in \Ker(p)$ and hence $b_+ \in \ov{B}^+_L$. The result now follows from the fact that $\Ker(p)^{-1} = \Ker(p)$ and that $T_R$ normalises $\ov{U}^+_L$.
\end{proof}

The following lemma is particularly useful.

\begin{lemma}[{\cite[Lemma 5.10]{MMinject}}]
\label{5.10}
Let $A = (A_{ij})_{i,j \in I}$ be $2$-spherical, and let $R$ be a local ring satisfying \eqref{co} in case $A$ is not spherical. Let $\pi_R$ denote the composition of $\varphi_R \colon G_R \to G^{\min}_R$ and $p \colon G^{\min}_R \to G_k$. Then, for $J \subseteq I$, \[G_{JR} \cap \pi^{-1}_R(B^-_k B^+_k) \subseteq T_R U_{JR}^- U_{JR}^+.\]
\end{lemma}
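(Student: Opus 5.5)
The natural approach is to prove a stronger, Birkhoff-type statement by induction on the length of an expression of $g$ as a product of elements of the rank one groups $G_{jR}$, $j\in J$. For $w\in \mc{W}_J$ set
\[\Omega_w := U^-_{JR}\, T_R\, \tilde{w}\, U^+_{JR} \subseteq G_{JR}.\]
The claim to prove is: for every $g\in G_{JR}$ and every $w\in\mc{W}_J$,
\[\pi_R(g)\in U^-_k\tilde{w}B^+_k \implies g\in\Omega_w.\]
The lemma is the case $w=1$, because $B^-_kB^+_k=U^-_kB^+_k$ and $T_R$ normalises $U^-_{JR}$. The base case $g=1$ is trivial, since the Birkhoff decomposition over $k$ is disjoint. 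For the induction, write $g=hx$ with $x\in G_{jR}$ and $h$ of shorter length. Let $v\in\mc{W}_J$ be the Birkhoff cell of $\pi_R(h)$, so that by induction $h\in\Omega_v$.

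\textbf{Rank one input.} Since $R$ is local, $G_{jR}$ (or at least its image, which is all that enters the computation) is a quotient of $\SL_2(R)$. In $\SL_2(R)$ one has
\[\SL_2(R)=U^-_2(R)T U^+_2(R)\ \sqcup\ U^+_2(R)\begin{psmallmatrix}0&1\\-1&0\end{psmallmatrix}TU^+_2(R).\]
The first piece occurs exactly when the $(1,1)$-entry is a unit; otherwise the $(2,1)$-entry is a unit and one reduces by a lower/upper factorisation. Hence every $x\in G_{jR}$ lies in $U_{-\alpha_j}T_RU_{\alpha_j}$ or in $U_{\alpha_j}T_R\tilde{s}_jU_{\alpha_j}$. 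Which case occurs is read off from $p(x)$ in $\SL_2(k)$.

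\textbf{Moving $x$ leftwards.} Write $h=u_-t\tilde{v}u_+$ with $u_+\in U^+_{JR}$. Factor $U^+_{JR}=U_{\alpha_j}\cdot V_j$, where $V_j$ is generated by the root groups of $\Phi^+(J)\setminus\{\alpha_j\}$. The key point is that $G_{jR}$ normalises $V_j$. Over $k$ this is the usual Levi decomposition. Over $R$ it is exactly what \Cref{levi decomp} and \Cref{generated by simples 2 spherical} should supply, together with \eqref{co}; this is where the hypothesis on $R$ enters. Then $u_+x=x'v'$ with $x'\in G_{jR}$ and $v'\in V_j\subseteq U^+_{JR}$, so
\[g\in U^-_{JR}T_R\,\tilde{v}\,x'\,U^+_{JR}.\]
It remains to analyse $\tilde{v}x'$ using the two rank one shapes of $x'$, together with $\tilde{v}U_{\pm\alpha_j}\tilde{v}^{-1}=U_{\pm v\alpha_j}$ and \Cref{length ws implies root positive}.

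\textbf{Case analysis.} If $\ell(vs_j)>\ell(v)$, then $v\alpha_j\in\Phi^+$. A factor in $U_{-\alpha_j}$ on the left of $x'$ cannot simply be absorbed. The case is decided instead by the shape of $x'$, and the shape is forced by the cell of $\pi_R(g)$ through the rules \eqref{product Birkhoff classes}:
\begin{itemize}
\item If $x'\in U_{\alpha_j}T_R\tilde{s}_jU_{\alpha_j}$, then $\tilde{v}x'\in U_{v\alpha_j}T_R\,\tilde{v}\tilde{s}_j\,U_{\alpha_j}$. Since $U_{v\alpha_j}$ is positive, one has to push it through using $\tilde{v}^{-1}$. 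This requires a second factorisation; this bookkeeping is the \emph{main obstacle}.
\item If $\ell(vs_j)<\ell(v)$, then $v\alpha_j\in\Phi^-$. A left factor in $U_{\alpha_j}$ becomes $U_{v\alpha_j}\subseteq U^-_{JR}$, and a factor in $U_{-\alpha_j}$ is handled symmetrically.
\end{itemize}
In each case one obtains $g\in\Omega_{w'}$ with $w'\in\{v,vs_j\}$. Projecting to $k$ and using disjointness of the Birkhoff decomposition gives $w'=w$.

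To control the main obstacle, I would rather reorganise the induction so that $x$ is multiplied on the side where the sign of $v\alpha_j$ makes absorption automatic. Concretely, write the Birkhoff cell of $g$ as $U^-\tilde{w}B^+$ and induct on $\ell(w)$ together with word length. The unit/non-unit dichotomy of the relevant $\SL_2(R)$ entry is then detected modulo $L$: that is, whether $p(x')$ lies in the big cell of $\SL_2(k)$, which is exactly what \eqref{product Birkhoff classes} dictates.
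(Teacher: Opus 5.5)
The paper does not prove this lemma itself; it quotes it from \cite[Lemma 5.10]{MMinject}. So your argument has to stand on its own, and it has a genuine gap: the strengthened statement you induct on is false for $w\neq 1$ as soon as the maximal ideal $L$ is nonzero.

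Here is a counterexample. Take $R$ a local domain which is not a field (for instance a DVR with large residue field), any $j\in J$, an element $0\neq \ell\in L$, and set $g:=x_{\alpha_j}(\ell)\,\tilde{s}_j\in G_{jR}$. Its image is $\varphi_{jR}\begin{psmallmatrix}-\ell & 1\\ -1 & 0\end{psmallmatrix}$, so $\pi_R(g)=\tilde{s}_j\in U^-_k\tilde{s}_jB^+_k$. On the other hand, $\varphi_R(\Omega_{s_j})\subseteq U^-_{\K}\tilde{s}_jB^+_{\K}$. But $\varphi_R(g)$ has $(1,1)$-entry $-\ell\neq 0$ in $\K$, so it lies in $\varphi_{j\K}(U^-_2(\K)B^+_2(\K))\subseteq U^-_{\K}B^+_{\K}$. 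By disjointness of the Birkhoff decomposition of $G_{\K}$, $g\notin\Omega_{s_j}$.

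The same phenomenon is visible already in $\SL_2(R)$. Every element of $U^-_2(R)\,T\begin{psmallmatrix}0&1\\-1&0\end{psmallmatrix}U^+_2(R)$ has $(1,1)$-entry exactly $0$. By contrast, the preimage of $U^-_k\tilde{s}B^+_k$ consists of all matrices with $(1,1)$-entry in $L$. Reduction modulo $L$ does not respect cells, so over $R$ the preimage of a cell $U^-_k\tilde{w}B^+_k$ with $w\neq 1$ is a Birkhoff cell only up to a congruence factor. In rank one, \Cref{lemma preimage B+2} applied to $\tilde{s}_j^{-1}g$ gives $g\in\tilde{s}_j\,U^-_2(L)B^+_2(R)$, not $g\in U^-_2(R)\tilde{s}_jB^+_2(R)$.

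Your induction step uses $h\in\Omega_v$ for intermediate products $h$ lying in arbitrary cells $v$. It therefore has no valid hypothesis to work with, even when the final target cell is $w=1$. The passage from $v=s_j$ back to $w=1$ is exactly where the $L$-part has to be absorbed. This is the ``main obstacle'' your sketch leaves open, and the proposed reorganisation does not address it.

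To make an induction of this kind work, you must carry the congruence part along. For example, you could aim to show that $\pi_R(g)\in U^-_k\tilde{w}B^+_k$ forces $g\in U^-_{JR}\,\tilde{w}\,K\,T_RU^+_{JR}$, with $K$ built from root elements $x_{\alpha}(\ell)$, $\ell\in L$. You would also need rules, inside $G_R$, for moving elements of $G_{jR}$ across $K$; compare the role of \cite[Lemma 5.1]{MMinject} and \Cref{lemma preimage B+2} in \Cref{thm birkhoff}, where $B(w)=\ov{U}^+_R\tilde{w}\ov{B}^-_R\Ker(p)$ and the kernel factor cannot be dropped. Only for $w=1$ is such a factor absorbed into $U^-_{JR}$.

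Two smaller points.
\begin{enumerate}
\item Your rank one union is not disjoint: a matrix whose $(1,1)$- and $(2,1)$-entries are both units lies in both pieces. So the shape of $x$ is not determined by $p(x)$.
\item The computation must take place in $G_R$ itself, not merely in the image $\ov{G}_{jR}$. The conclusion is a statement in $G_R$, and $\varphi_R$ is not known to be injective at this stage, so you need $G_{jR}$ itself to be a quotient of $\SL_2(R)$.
\end{enumerate}
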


We also record the following lemma for convenience. Here $U^{\pm}_2(L)$ denotes the kernel of the canonical map $\SL_2(R) \to \SL_2(k)$ induced by $R \to k$, restricted to $U^{\pm}_2(R)$. 
\begin{lemma}[{\cite[Lemma 5.2]{MMinject}}]
\label{lemma preimage B+2}
Let $i \in I$ and $g \in \ov{G}_{iR}$. If $p(g) \in \varphi_{ik}(B^+_2(k))$, then $g \in \varphi_{iR}(U^-_2(L) B^+_2(R))$. Similarly, if $p(g) \in \varphi_{ik}(B^-_2(k))$, then $g \in \varphi_{iR}(U_2^+(L) B^-_2(R))$. 
\end{lemma}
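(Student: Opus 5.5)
Since $R$ is local, $\varphi_{iR}$ induces an isomorphism $\SL_2(R) \cong \ov{G}_{iR}$, so I will transfer the problem to a $2\times 2$ matrix computation. Write $g = \varphi_{iR}(h)$ with $h = \begin{psmallmatrix} a & b \\ c & d \end{psmallmatrix} \in \SL_2(R)$. By functoriality of the morphism $\varphi_i \colon \SL_2 \to \mfG^{\min}_A$, applied to the ring map $R \to k$, we have $p \circ \varphi_{iR} = \varphi_{ik} \circ \rho$. Here $\rho \colon \SL_2(R) \to \SL_2(k)$ is reduction modulo $L$. The hypothesis therefore reads $\varphi_{ik}(\rho(h)) \in \varphi_{ik}(B^+_2(k))$.

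The first step is to deduce $\rho(h) \in B^+_2(k)$. Over the field $k$ the map $\varphi_{ik}$ has kernel contained in the centre $\{\pm 1\}$ of $\SL_2(k)$; in fact it is injective, since $G_k \cong G^{\min}_k$ and its rank one subgroups are copies of $\SL_2(k)$. Since $\{\pm1\} \subseteq B^+_2(k)$, we get $\rho(h) \in B^+_2(k)\Ker(\varphi_{ik}) = B^+_2(k)$ in either case. This is the only point where I need information about $\mfG^{\min}_A$ beyond functoriality, and I expect it to be the main obstacle. The fallback argument via the centre avoids needing exact injectivity.

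Thus $c \in L$. From $ad - bc = 1$ we get $ad \equiv 1 \pmod L$, so $a \in R^\times$ because $R$ is local. I then factor explicitly:
\[
h = \begin{pmatrix} 1 & 0 \\ c a^{-1} & 1 \end{pmatrix}\begin{pmatrix} a & b \\ 0 & d - c a^{-1} b \end{pmatrix}.
\]
The first factor lies in $U^-_2(L)$ since $ca^{-1} \in L$. The second factor lies in $B^+_2(R)$, because it has determinant $1$, being a quotient of two matrices of determinant $1$. Applying $\varphi_{iR}$ gives $g \in \varphi_{iR}(U^-_2(L) B^+_2(R))$.

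The second statement is symmetric. Now $\rho(h) \in B^-_2(k)$ gives $b \in L$, hence $d \in R^\times$, and
\[
h = \begin{pmatrix} 1 & b d^{-1} \\ 0 & 1 \end{pmatrix}\begin{pmatrix} a - b d^{-1} c & 0 \\ c & d \end{pmatrix} \in U^+_2(L) B^-_2(R).
\]
Alternatively, this follows from the first statement by conjugating with $\begin{psmallmatrix} 0 & 1 \\ -1 & 0 \end{psmallmatrix}$.
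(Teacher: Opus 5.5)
Your argument is correct and complete. The paper gives no proof of its own here (it simply cites \cite[Lemma 5.2]{MMinject}), and your route is the natural elementary one: use $p\circ\varphi_{iR}=\varphi_{ik}\circ\rho$ and the injectivity of $\varphi_{ik}$ to get $c\in L$, hence $a\in R^\times$ since $R$ is local, and then factor explicitly. The injectivity you need is taken for granted in the paper's preliminaries, where it states $\ov{G}_{iR}\cong\SL_2(R)$ for local $R$; apply this with $R=k$. One small caveat is that your fallback claim that $\Ker(\varphi_{ik})$ is central is not automatic for $k=\F_2,\F_3$, where $\SL_2(k)$ has non-central proper normal subgroups. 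You do not need that fallback, since exact injectivity is available.
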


When $R$ is a domain, under the identification $G_{\K}^{\min} \cong G_{\K}$, we have \begin{equation}
\label{Bir}
\ov{U}^{\pm}_R \cap \ov{B}^{\mp}_R \subseteq U^{\pm}_{\K} \cap B^{\mp}_{\K} = \{1\},
\end{equation}
where the last equality follows from the Birkhoff decomposition of $G_{\K}$ (see \cite[eq. (2.1)]{MMinject}). This condition also implies that $T_R \cap \ov{U}^{\pm}_R = \{1\}$.
\begin{lemma}
Let $A = (A_{ij})_{i,j \in I}$ be a $2$-spherical GCM. Let $R$ be a local domain satisfying \eqref{co}. Then for any $J \subseteq I$, 
\begin{equation}
\label{intersection B+ P-}
\ov{U}^{+}_R \cap \ov{P}^{-}_{JR} = \ov{U}^{+}_{JR}, 
\end{equation}
\end{lemma}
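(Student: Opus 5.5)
The inclusion $\ov{U}^+_{JR} \subseteq \ov{U}^+_R \cap \ov{P}^-_{JR}$ is immediate. Indeed, by \Cref{generated by simples 2 spherical} applied to the $2$-spherical matrix $A_J$, the group $\ov{U}^+_{JR}$ is generated by the $U_{\alpha_j}$ with $j\in J$, and these lie in $\ov{G}_{JR}\subseteq \ov{P}^-_{JR}$. So the plan concentrates on the reverse inclusion.

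\textbf{Step 1: a product decomposition of $\ov{P}^-_{JR}$.} First I would show that
\[\ov{P}^-_{JR} = \ov{G}_{JR}\,\ov{B}^-_R = \ov{B}^-_R\,\ov{G}_{JR}.\]
By \Cref{levi decomp}, $G_{jR}B^-_R = B^-_RG_{jR}$ for each $j\in J$. An induction on word length then shows that $G_{JR}B^-_R$ is closed under multiplication. Taking inverses shows it is a subgroup containing both $G_{JR}$ and $B^-_R$, so it equals $P^-_{JR}$. Applying $\varphi_R$ gives the claim.

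\textbf{Step 2: reduce to $J$-data.} Write $u\in \ov{U}^+_R\cap\ov{P}^-_{JR}$ as $u = g b$ with $g\in \ov{G}_{JR}$ and $b\in\ov{B}^-_R$. Pass to the residue field via $p$. Over the field $k$ we have $P^-_{Jk}=B^-_k\mc{W}_JB^-_k$, and the Birkhoff decomposition of $G_k$ gives $p(u)\in U^+_k\cap P^-_{Jk}$. The field case of the statement, $U^+_k\cap P^-_{Jk}=U^+_{Jk}$, is standard from the twin building (or the BN-pair refined Birkhoff decomposition). Using it, $p(g)\in U^+_{Jk}B^-_k \subseteq B^-_kB^+_k$ after conjugating by a longest-type element, or more directly $p(g)\in U^+_{Jk}B^-_k$. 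Choose $v\in \ov{U}^+_{JR}$ lifting the $U^+_{Jk}$-part; this is possible since root groups surject. Then $g':=v^{-1}g\in\ov{G}_{JR}$ has $p(g')\in B^-_k\subseteq B^-_kB^+_k$. By \Cref{5.10} (applied with the Chevalley involution swapping signs if needed), $g'\in T_R\,\ov{U}^+_{JR}\,\ov{U}^-_{JR}$ in the appropriate order. Hence $u = v g' b\in \ov{U}^+_{JR}\,\ov{B}^-_R$.

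\textbf{Step 3: conclude via \eqref{Bir}.} We now have $u = v' b'$ with $v'\in\ov{U}^+_{JR}$ and $b'\in\ov{B}^-_R$. Then $b' = v'^{-1}u\in \ov{U}^+_R\cap\ov{B}^-_R=\{1\}$ by \eqref{Bir}, so $u = v'\in\ov{U}^+_{JR}$.

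The main obstacle is Step 2. The issue is the order of factors coming out of \Cref{5.10}: it gives $T_RU^-_{JR}U^+_{JR}$, whereas I need $U^+_{JR}T_RU^-_{JR}$. I would handle this by applying the Cartan--Chevalley involution $\omega_R$ and inverses, or by first rewriting $p(g)\in U^+_{Jk}B^-_k$ as $p(g^{-1})\in B^-_kU^+_{Jk}$ and applying the lemma to $g^{-1}$ with roles of signs swapped. A secondary difficulty is justifying the field-case identity $U^+_k\cap P^-_{Jk}=U^+_{Jk}$. I would take this from the twin building: $U^+$ acts simply transitively on chambers opposite the base chamber of the negative building, and $P^-_J$ stabilises the residue.
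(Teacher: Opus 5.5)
Your argument is correct in substance, but it takes a different route from the paper.

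\textbf{The paper's route.} It first observes $\ov{U}^+_R\subseteq\ov{P}^+_{JR}$ and invokes \cite[Proposition 5.12]{MMinject}, namely $\ov{P}^+_{JR}\cap\ov{P}^-_{JR}=T_R\ov{G}_{JR}$. This writes $u=tg_J$ with $t\in T_R$ and $g_J\in\ov{G}_{JR}$. Then \Cref{5.10} gives $u\in T_R\ov{U}^-_{JR}\ov{U}^+_{JR}$, and \eqref{Bir} is applied twice: once to kill the $\ov{U}^-_{JR}$-factor and once to kill the torus factor.

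\textbf{Your route.} You use \Cref{levi decomp} to get $\ov{P}^-_{JR}=\ov{G}_{JR}\ov{B}^-_R$. This uses only results already recorded in the paper's preliminaries, avoids the citation of the intersection of opposite parabolics, and lets the torus be absorbed into $\ov{B}^-_R$. As a result \eqref{Bir} is needed only once.

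\textbf{Resolving your obstacle.} The order-of-factors problem you flag disappears if you apply \Cref{5.10} to an inverse, and no Cartan--Chevalley involution is needed. In fact this also makes your detour through $k$ unnecessary. From $u=gb$ you get $g^{-1}=bu^{-1}$, so $p(g^{-1})\in B^-_kU^+_k\subseteq B^-_kB^+_k$. Lifting $g^{-1}$ to $G_{JR}$, applying \Cref{5.10} and pushing forward by $\varphi_R$, you obtain $g^{-1}\in T_R\ov{U}^-_{JR}\ov{U}^+_{JR}$. Hence
\[u=gb\in\ov{U}^+_{JR}\,\ov{U}^-_{JR}\,T_R\,\ov{B}^-_R=\ov{U}^+_{JR}\,\ov{B}^-_R,\]
and your Step 3 finishes the proof. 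The same trick applied to $g'^{-1}$, using $p(g'^{-1})=p(b)\in B^-_k$, also repairs your version as written.

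\textbf{What to drop.} The field-case identity $U^+_k\cap P^-_{Jk}=U^+_{Jk}$ and the lift $v$ serve no purpose, so remove them. Also delete the assertion $U^+_{Jk}B^-_k\subseteq B^-_kB^+_k$ ``after conjugating by a longest-type element''. As an inclusion it is false: already in $\SL_2(k)$, every element of $B^-_2(k)B^+_2(k)$ has nonzero $(1,1)$-entry, while the product of $x_{\alpha}(1)$ with $x_{-\alpha}(1)$ does not. It plays no role once you argue with $g^{-1}$.
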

\begin{proof}
The inclusion $\ov{U}^+_R \subseteq  \ov{U}^+_R \cap \ov{P}^-_{JR}$ is trivial. For the opposite inclusion, note first that 
\[\ov{U}^+_R \cap \ov{P}^-_{JR} = \ov{U}^+_R \cap \ov{P}^+_{JR} \cap \ov{P}^-_{JR} = \ov{U}^+_R \cap T_R \ov{G}_{JR}, \]
where the latter equality follows from \cite[Proposition 5.12]{MMinject}. Suppose that $g \in U^+_R$ satisfies $\varphi_R(g) \in T_R \ov{G}_{JR}$. Let $t \in T_R$ and $g_J \in G_{JR}$ be such that $\varphi_{R}(g) = \varphi_R(t g_J)$.
Then \[p(\varphi_R(tg_J)) = p(\varphi_R(g)) \in U^+_k \subseteq B^-_k B^+_k.\]
In particular, by \Cref{5.10} it follows that $t g_J \in T_R U^-_{JR} U^+_{JR}$. Choose $t' \in T_R$ and $u_{\pm} \in U^{\pm}_{JR}$ such that $tg_J = t' u_- u_+$. In particular, $\varphi_R(g) = \varphi_R(t' u_- u_+)$. Then, by \eqref{Bir},
\[\varphi_R(t'^{-1} g u_+^{-1}) = \varphi_R(u_-) \in T_R \ov{U}^+_R \cap \ov{U}^-_{JR} = \{1\}. \] 
Hence $\varphi_R(t') = \varphi_R(g u_+^{-1}) \in T_R \cap \ov{U}^+_R = \{1\}$,
and we obtain that $\varphi_R(g) = \varphi_R(u_+) \in \ov{U}^+_{JR}$. 
\end{proof}

\section{\texorpdfstring{Birkhoff decomposition of $G^{\min}_R$}{Birkhoff decomposition of Gmin(R)}}
\label{sect birkhoff}
Throughout this section, we fix a $2$-spherical GCM $A = (A_{ij})_{i,j\in I}$, and $R$ a local ring. We aim to construct a decomposition of $G^{\min}_R$ which generalises the Birkhoff decompositions in the field case.

\begin{definition}
Let $g \in G^{\min}_R$. A \emph{Birkhoff expression of $g$} is a decomposition of the form 
\[ g = u_+ g_1 \ldots g_d b_-, \]
with $u_+ \in \ov{U}^+_R$ and $b_- \in \ov{B}^-_R$, and such that for all $1 \leq j \leq d$, there is some $i_j \in I$ such that $g_j \in \ov{G}_{i_j R}$. The \emph{type} of such a Birkhoff expression of $g$ is the tuple $(s_{i_1}, \ldots, s_{i_d})$, for $s_i \in \mc{W}$. The Birkhoff expression $g = u_+ g_1 \ldots g_d b_+$ is \emph{reduced} if $d$ is minimal among the lengths of types of Birkhoff expressions of $g$. 
\end{definition}

For $A$ a GCM, let $(m_{ij})_{i,j \in I}$ denote its Coxeter matrix and for any $i,j \in I$, let $\mathbf{t}(i,j)$ and $\tau(i,j)$ be as in \cref{sect coxeter}. The following result is a construction of subsets of $G^{\min}_R$ generalising the double cosets appearing in the Birkhoff decomposition of a Kac--Moody group over a field.
\begin{proposition}
\label{uniqueness birkhoff classes}
Let $R$ be a local ring satisfying \eqref{co} in case $A$ is not spherical. 
Let $p \colon G_R^{\min} \to G_k$ be as in \eqref{definition projection onto residue field} and let $\mathbf{w} = (s_{i_1},\ldots, s_{i_d})$ be a word in the generators $\{s_i \mid i \in I\}$ of $\mc{W}$.  Define
\begin{align*}
B(\mathbf{w}) := \Big\{ g \in G^{\min}_R \mid g \text{ has a reduced Birkhoff expression of type } (s_{i_1}, \ldots, s_{i_d})\Big\}.
\end{align*}
Then the following hold:
\begin{enumerate}[(i)]
\item $g \in B(\mathbf{w})$ implies that $p(g) \in U^+_k \tilde{w} B^-_k$, \label{birk1}
\item Let $\mathbf{w'} = (s_{j_1}, \ldots, s_{j_d})$ differ from $\mathbf{w}$ by a finite sequence of operation of type {\normalfont (\hyperlink{M2}{M2})}. Then $B(\mathbf{w})~=~ B(\mathbf{w'})$. \label{birk3} 
\item The set $B(\mathbf{w})$ is non-empty if and only if $\mathbf{w}$ is reduced. \label{birk2}
\end{enumerate}
\end{proposition}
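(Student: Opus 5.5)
The plan is to push everything down to the residue field $k$ via $p$, where the Birkhoff decomposition of $G_k$ and the multiplication rules \eqref{product Birkhoff classes} (with $\varepsilon = -$) hold. I then lift information back along $p$ using \Cref{Kernel p}, \Cref{lemma preimage B+2} and \Cref{levi decomp}. Two preliminary observations will be used throughout.

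\begin{enumerate}
\item[(a)] A prefix of a reduced Birkhoff expression is again reduced for the element it represents. If $h = u_+ g_1 \cdots g_{j}$ had a shorter expression $u'_+ g'_1 \cdots g'_{e} b'_-$, then moving $b'_-$ past $g_{j+1}$ would shorten the expression of $g$. For this I use \Cref{levi decomp} to write $b'_- g_{j+1} \in \ov{G}_{i_{j+1}R}\ov{B}^-_R$.
\item[(b)] Two consecutive factors from the same $\ov{G}_{iR}$ can be merged, and any factor of $\ov{U}^+_R$ or $\ov{B}^-_R$ sitting between the $g_j$'s can be pushed outwards by \Cref{levi decomp} without changing the type.
\end{enumerate}

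For \eqref{birk1} I argue by induction on $d$. By (a) and the induction hypothesis, $p(u_+g_1\cdots g_{d-1}) \in U^+_k \tilde{w}' B^-_k$ with $w' = s_{i_1}\cdots s_{i_{d-1}}$, so $p(g) \in U^+_k \tilde w' B^-_k\, p(g_d) B^-_k$. There are two cases.

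\begin{itemize}
\item If $p(g_d) \notin \varphi_{i_dk}(B^-_2(k))$, then $p(g_d) \in B^-_k \tilde s_{i_d} B^-_k$. By \eqref{product Birkhoff classes}, $p(g)$ lies in $U^+_k\tilde w' \tilde s_{i_d} B^-_k$ or in $U^+_k \tilde w' B^-_k$, and the first holds whenever $\ell(w's_{i_d}) = \ell(w')-1$.
\item If $p(g_d) \in \varphi_{i_dk}(B^-_2(k))$, then \Cref{lemma preimage B+2} gives $g_d = x_{\alpha_{i_d}}(l)\, b$ with $l \in L$ and $b \in \ov{B}^-_R$. I would then show that $u_+g_1\cdots g_{d-1}x_{\alpha_{i_d}}(l)$ admits a Birkhoff expression of type $(s_{i_1},\ldots,s_{i_{d-1}})$, contradicting reducedness. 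The idea is to conjugate $x_{\alpha_{i_d}}(l)$ leftwards through the $g_j$. Since it lies in $\Ker(p)$, \Cref{Kernel p} lets me rewrite it as $\ov{U}^+_L\ov{B}^-_L$ at each stage, and absorb the pieces using \Cref{levi decomp} and (b).
\end{itemize}

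Similarly, one must rule out the ``non-generic'' alternative $U^+_k\tilde w'B^-_k$ in the first case. That alternative would again produce a shorter expression after lifting via \Cref{5.10} applied to $J=\{i_d\}$. This lifting/absorption step is where I expect the main difficulty. It requires tracking congruence-subgroup elements through the product, and it is also where the sign conventions (which length change is ``generic'') must be checked carefully. The output of the induction is the stronger statement that $p(g) \in U^+_k \tilde w B^-_k$ with $\ell(w) = d$.

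For \eqref{birk3} it suffices to treat a single (M2) move. The segment $g_a\cdots g_{a+m_{ij}-1}$ of type $\mathbf t(i,j)$ lies in $\ov{G}_{\{i,j\}R}$, which is the image of a rank $2$ Chevalley group over the local ring $R$. Within it I aim to show that any reduced product of type $\mathbf{t}(i,j)$ can be rewritten as $u\, h_1\cdots h_{m_{ij}}\, b$ of type $\mathbf t(j,i)$, with $u \in \ov{U}^+_{\{i,j\}R}$ and $b \in \ov{B}^-_{\{i,j\}R}$. One route is to use \eqref{birk1} in rank $2$, giving $p(\text{segment}) \in U^+_k\tilde\tau(i,j)B^-_k = U^+_k\tilde\tau(j,i)B^-_k$. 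I then lift, by \Cref{5.10} after translating by $\tilde\tau(j,i)^{-1}$, to an expression of type $\mathbf t(j,i)$ built from the obvious factorisation of $\tilde\tau(j,i)$. Finally I push $u$ and $b$ outwards by (b). Minimality of $d$ is preserved since the length is unchanged.

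For \eqref{birk2}, suppose first that $\mathbf w$ is reduced and take $g = \tilde s_{i_1}\cdots\tilde s_{i_d}$. Any Birkhoff expression of $g$ of some length $e$, reduced after applying (M1)/(M2) and merging, would by \eqref{birk1} give $p(g) \in U^+_k\tilde w''B^-_k$ with $\ell(w'')\le e$. But also $p(g)=\tilde w$, and uniqueness in the Birkhoff decomposition forces $w''=w$, hence $e\ge d$. So $g \in B(\mathbf w)$. Conversely, if $\mathbf w$ is not reduced, Tits' word problem solution gives (M2) moves producing a word with an adjacent pair $(s_i,s_i)$. By \eqref{birk3}, $B(\mathbf w)$ equals $B$ of that word. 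Any element of the latter set has an expression with two adjacent factors in $\ov{G}_{iR}$, which merge into one factor, contradicting minimality. Hence $B(\mathbf w)=\emptyset$.
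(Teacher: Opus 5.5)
Your overall architecture matches the paper's: induction on $d$ through $p$, the rule \eqref{product Birkhoff classes}, reducedness of prefixes, and Tits' word problem for \eqref{birk2}. Your treatment of \eqref{birk2} is essentially the paper's. Your second case in \eqref{birk1} is also fine, and simpler than you make it: $g_d = x_{\alpha_{i_d}}(l)\,b \in \ov{U}^+_R \ov{B}^-_R$, so your observation (b) shortens the expression at once, with no conjugation through kernels.

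The genuine gap is the crux of \eqref{birk1}: excluding $p(g) \in U^+_k \tilde{w}' B^-_k$ in your first case. You leave this step open. \Cref{5.10} with $J = \{i_d\}$ cannot settle it on its own, because it only sees the factor $g_d$. Which of the two cells $p(g)$ lands in depends on how $g_d$ interacts with the whole prefix $g'$. In fact your label ``non-generic'' is backwards. If $\ell(w's_{i_d}) = \ell(w')+1$, then among the cosets $p(g')yB^-_k$ with $y \in G_{i_d k}$, all but exactly one lie in $U^+_k\tilde{w}'B^-_k$. So nothing about $g_d$ in isolation rules this alternative out; only minimality of the entire expression does.

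The missing idea is a single global lift applied to $g$ itself, which is what the paper does. Suppose $p(g) \in U^+_k \tilde{w}' B^-_k$. Lift the outer factors to $\ov{U}^+_R$ and $\ov{B}^-_R$ and use normality of $\Ker(p)$ to get $g \in \ov{U}^+_R\, \tilde{w}'\, \Ker(p)\, \ov{B}^-_R$. By \Cref{Kernel p} this lies in $\ov{U}^+_R\, \tilde{s}_{i_1}\cdots \tilde{s}_{i_{d-1}}\, \ov{U}^+_L\, \ov{B}^-_R$. Moving $\ov{U}^+_L \subseteq \ov{U}^+_R$ to the left with \Cref{levi decomp} gives a Birkhoff expression of type $(s_{i_1},\ldots,s_{i_{d-1}})$, contradicting minimality. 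No case split on $\ell(w's_{i_d})$ is needed.

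The same lift, applied to a reduced word for $w's_{i_d}$, is also what you would need to exclude $\ell(w's_{i_d}) = \ell(w')-1$. Your claimed stronger output $\ell(w)=d$ requires this, but your write-up lets that case through.

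Your rank-one route can be repaired, but only after first lifting the prefix in exactly this way, $g' \in \ov{U}^+_R \tilde{w}' \ov{U}^+_L \ov{B}^-_R$. At that point lifting $g$ directly is shorter.

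The same lift also proves \eqref{birk3} in one line. By \eqref{birk1}, $p(g) \in U^+_k \tilde{s}_{j_1}\cdots\tilde{s}_{j_d} B^-_k$, hence $g \in \ov{U}^+_R\, \ov{G}_{j_1R}\cdots\ov{G}_{j_dR}\,\ov{B}^-_R$, and this expression is reduced because its length is $d$. Your rank-$2$ segment argument via \Cref{5.10} is workable but unnecessary.
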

\begin{proof}
\eqref{birk1}. Let $\mathbf{w} = (s_{i_1}, \ldots, s_{i_d})$. We prove the statement by induction on $d$. Let $g \in  B(\mathbf{w})$. Suppose $d = 0$. Then $g \in  \ov{U}^+_R\ov{B}^-_R$. It then follows by definition that $p(g) \in U^+_k B^-_k$.

Suppose now that for any $\mathbf{w} = (s_{i_1}, \ldots, s_{i_d})$ with $d \leq n$ and any $g \in B(\mathbf{w})$, one has $p(g) \in U^+_k \tilde{w} B^-_k$, where $w = s_{i_1} \ldots s_{i_d} \in \mc{W}$.

Let then $\mathbf{w} = (s_{i_1}, \ldots, s_{i_{n+1}})$ and $g \in B(\mathbf{w})$, i.e.\ \[g = u_+ g_1 \ldots g_{n+1} b_-, \text{ for some } u_+ \in \ov{B}^-_R, \quad  g_j \in \ov{G}_{i_jR}, \quad \text{ and } b_-\in \ov{B}^-_R,\]
with $n+1$ minimal amongst the lengths of Birkhoff expressions of $g$. By \Cref{levi decomp}, $\ov{B}^-_R \ov{G}_{iR} = \ov{G}_{iR} \ov{B}^-_R $ and $\ov{G}_{iR} \ov{U}^+_R = \ov{U}^+_R \ov{G}_{iR}$, for any $i \in I$. Hence for all $j \in \{1, \ldots, n+1\}$, \[g_j \not \in \varphi_{i_j R}(U^+_2(R)B^-_2(R)) \subseteq \ov{U}^+_R \ov{B}^-_R,\] by minimality of $n+1$. Then \Cref{lemma preimage B+2} implies that $p(g_j) \not\in  \varphi_{i_j k }(B^-_2(k))$. It follows in particular that 
\[p(g_{n+1}) \in \varphi_{i_{n+1}k}\Big(B^-_2(k) \begin{psmallmatrix} 0 & 1 \\ -1 & 0\end{psmallmatrix} B^-_2(k)\Big) \subseteq B^-_k \tilde{s}_{i_{n+1}} B^-_k.\]
Additionally, writing $\mathbf{w}' := (s_{i_1}, \ldots, s_{i_n})$, we have
\[g' := u_+ g_1 \ldots g_n \in B(\mathbf{w}'),\] 
since the above decomposition not being a reduced Birkhoff expression of $g'$, would contradict the reducedness of the Birkhoff expression $u_+g_1 \ldots g_{n+1} b_-$ of $g = u_+ g_1 \ldots g_{n+1} b_- = g' g_{n+1} b_-$. By the induction hypothesis, 
\[p(g') \in U^+_k \tilde{s}_{i_1} \ldots \tilde{s}_{i_n} B^-_k.\]
Combining the above then implies by \eqref{product Birkhoff classes} that 
\[p(g) = p(g' g_{n+1} b_-) \in U^+_k \tilde{s}_{i_1} \ldots \tilde{s}_{i_n} B^-_k \tilde{s}_{i_{n+1}} B^-_k \subseteq U^+_k \{ \tilde{s}_{i_1} \ldots \tilde{s}_{i_{n}}, \tilde{s}_{i_1} \ldots \tilde{s}_{i_{n+1}}\} B^-_k.\]
Suppose that $p(g) \in U^+_k \tilde{s}_{i_1} \ldots \tilde{s}_{i_n} B^-_k$. Then by \Cref{Kernel p}, 
\begin{equation*}
g  \in \ov{U}^+_R \tilde{s}_{i_1} \ldots \tilde{s}_{i_n} \Ker(p) \ov{B}^-_R  \subseteq \ov{U}^+_R \tilde{s}_{i_1} \ldots \tilde{s}_{i_n} \ov{U}^+_L \ov{B}^-_R \subseteq \ov{U}^+_R \ov{G}_{i_1 R} \ldots \ov{G}_{i_n R} \ov{B}^-_R,
\end{equation*}
where the last inclusion follows from \cite[Lemma 5.1]{MMinject}, which implies that $\ov{G}_{i R} \ov{U}^+_L = \ov{U}^+_L \ov{G}_{i R}$ for any $i\in I$. But this is a contradiction with the minimality of $n+1$, and hence \[p(g) \in U^+_k \tilde{s}_{i_1} \ldots \tilde{s}_{i_{n+1}} B^-_k,\]
as desired. This concludes the proof of \eqref{birk1}.

\eqref{birk3}. Suppose $\mathbf{w}' = (s_{j_1}, \ldots, s_{j_d})$ is obtained from $\mathbf{w} = (s_{i_1}, \ldots , s_{i_d})$ by applying (\hyperlink{M2}{M2}) $n$ times, for some $n \geq 1$. Let $w \in \mc{W}$ be the associated group element. Let $g \in B(\mathbf{w})$. By \eqref{birk1}, \[p(g) \in U^-_k \tilde{s}_{i_1} \ldots \tilde{s}_{j_d} B^+_k = U^-_k \tilde{w} B^+_k = U^-_k \tilde{s}_{j_1} \ldots \tilde{s}_{j_d} B^-_k.\] In particular, since $\ov{U}^+_R \tilde{w} \ov{B}^-_R$ surjects onto $U^+_k \tilde{w} B^-_k$, we have as in the previous paragraph that 
\[g \in \ov{U}^+_R \tilde{s}_{j_1} \ldots \tilde{s}_{j_d} \Ker(p) \ov{B}^-_R \subseteq \ov{U}^+_R \tilde{s}_{j_1} \ldots \tilde{s}_{j_d} \ov{U}^+_L\ov{B}^-_R \subseteq \ov{U}^+_R \ov{G}_{j_1} \ldots \ov{G}_{j_d} \ov{B}^-_R,\]
by \Cref{Kernel p} and \cite[Lemma 5.1]{MMinject}. Hence $g$ has a Birkhoff expression of type $(s_{j_1}, \ldots, s_{j_d})$. By the fact that $d$ is the minimal length for which a Birkhoff expression of $g$ exists, it follows that this Birkhoff expression is reduced as well. Thus $g \in B(\mathbf{w}')$. The converse inclusion follows similarly. Hence we obtain \eqref{birk3}.

\eqref{birk2}. Suppose $\mathbf{w} = (s_{i_1}, \ldots, s_{i_d})$ is not reduced. Let $g \in B(\mathbf{w})$. By Tits' solution to the word problem, there is an integer $n \geq 1$ such that, after applying some $n$ operations of type (\hyperlink{M2}{M2}), the obtained word $\mathbf{w'} = (s_{j_1}, \ldots, s_{j_d})$ has a subword of the form $(s_{i}, s_i)$. But then, from the fact that $B(\mathbf{w}) = B(\mathbf{w}')$ by \eqref{birk3}, it follows that \[g \in \ov{U}^-_R \ov{G}_{j_1 R}\ldots \ov{G}_{iR} \ov{G}_{iR} \ldots \ov{G}_{j_d R} \ov{B}^-_R = \ov{U}^+_R \ov{G}_{j_1 R} \ldots \ov{G}_{i R} \ldots \ov{G}_{j_d R},\]
where the equality holds because $\ov{G}_{iR} \ov{G}_{iR} = \ov{G}_{iR}$. But this would mean that $g$ has an expression $g = u_{+} g_1 \ldots g_{d-1} b_-$, for some $g_{\ell} \in \ov{G}_{t_{\ell} R}$ and elements $u_+ \in \ov{U}^+_R$ and $b_- \in \ov{B}^-_R$. This is a contradiction with reducedness of the expression of $g$ of type $\mathbf{w}$. Hence $B(\mathbf{w})$ is empty if $\mathbf{w}$ is not reduced. 

Conversely, if $\mathbf{w} = (s_{i_1}, \ldots s_{i_d})$ is reduced, then $g := \tilde{s}_{i_1} \ldots \tilde{s}_{i_n}$ is a reduced Birkhoff expression of $g$. Indeed, by definition, $p(\tilde{s}_{i_1} \ldots \tilde{s}_{i_n}) \in U^-_k \tilde{w} B^+_k$ with $w = s_{i_1} \ldots s_{i_n}$. Since $\mathbf{w}$ is reduced, it follows that $\ell(w) = n$. If $g$ had a Birkhoff expression of shorter length, we would have by \eqref{birk1} that $p(g) \in U^+_k w' B^-_k$ with $\ell(w') < \ell(w)$, a contradiction with the Birkhoff decomposition of $G_k$. Hence $g \in B(\mathbf{w})$.
\end{proof}
It follows from \Cref{uniqueness birkhoff classes} that for any $w\in \mc{W}$, the set $B(\mathbf{w})$ does not depend on the chosen reduced expression $\mathbf{w}$ of $w$. We then denote 
\begin{equation}
\label{def Bw}
B(w) := B(\mathbf{w}), \quad \text{ for any reduced expression } \mathbf{w} = (s_{i_1}, \ldots, s_{i_d}) \text{ of } w \in \mc{W}.
\end{equation}

\begin{theorem}
\label{thm birkhoff}
Let $R$ be a local ring satisfying \eqref{co} in case $A$ is not spherical. Then $G^{\min}_R$ has a decomposition
\begin{equation}
\label{eq Birkhoff local}
G^{\min}_R = \coprod_{w \in \mc{W}} B(w).
\end{equation}
Moreover,
\begin{equation}
\label{inverse image Birkhoff classes}
B(w) = p^{-1}(U^+_k \tilde{w} B^-_k) = \ov{U}^+_R \tilde{w} \ov{B}^-_R \Ker(p).
\end{equation}
\end{theorem}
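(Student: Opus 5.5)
We prove the chain of inclusions
\[ B(w) \subseteq p^{-1}(U^+_k \tilde{w} B^-_k) \subseteq \ov{U}^+_R \tilde{w} \ov{B}^-_R \Ker(p) \subseteq B(w), \]
and then deduce the decomposition \eqref{eq Birkhoff local} from the Birkhoff decomposition of $G_k$.

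\emph{First inclusion.} This is immediate from \Cref{uniqueness birkhoff classes}\eqref{birk1}.

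\emph{Second inclusion.} Let $g \in p^{-1}(U^+_k \tilde{w} B^-_k)$. The map $p$ sends $\ov{U}^+_R$ onto $U^+_k$ and $\ov{B}^-_R$ onto $B^-_k$, because $U^\pm_k$ is generated by root groups and $R \to k$ is surjective; it also sends $\tilde{w}$ to $\tilde{w}$. Hence we may write $p(g) = p(u_+ \tilde{w} b_-)$ with $u_+ \in \ov{U}^+_R$ and $b_- \in \ov{B}^-_R$. Then $g \in u_+ \tilde{w} b_- \Ker(p)$. Since $\Ker(p)$ is normal, $b_-\Ker(p) = \Ker(p) b_-$, but the order stated in \eqref{inverse image Birkhoff classes} already has $\Ker(p)$ on the right, so no rearrangement is needed.

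\emph{Third inclusion.} Fix a reduced expression $(s_{i_1},\ldots,s_{i_d})$ of $w$. By normality of $\Ker(p)$ and \Cref{Kernel p},
\[ \ov{U}^+_R \tilde{w} \ov{B}^-_R \Ker(p) = \ov{U}^+_R \tilde{w} \Ker(p) \ov{B}^-_R = \ov{U}^+_R \tilde{w} \ov{U}^+_L \ov{B}^-_L \ov{B}^-_R = \ov{U}^+_R \tilde{w} \ov{U}^+_L \ov{B}^-_R. \]
As in the proof of \Cref{uniqueness birkhoff classes}, \cite[Lemma 5.1]{MMinject} gives $\ov{G}_{iR}\ov{U}^+_L = \ov{U}^+_L \ov{G}_{iR}$. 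Writing $\tilde{w} = \tilde{s}_{i_1}\cdots\tilde{s}_{i_d}$ with $\tilde{s}_{i_j} \in \ov{G}_{i_jR}$, we can move $\ov{U}^+_L$ leftwards through the factors $\ov{G}_{i_jR}$, one at a time, and absorb it into $\ov{U}^+_R$. This places every such $g$ in $\ov{U}^+_R \ov{G}_{i_1R}\cdots\ov{G}_{i_dR}\ov{B}^-_R$, i.e.\ $g$ has a Birkhoff expression of type $(s_{i_1},\ldots,s_{i_d})$.

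The main point is to show that this expression is \emph{reduced}. Suppose instead that $g$ has a reduced Birkhoff expression of type $\mathbf{w}'$ of length $d' < d$. By \Cref{uniqueness birkhoff classes}\eqref{birk2}, $\mathbf{w}'$ is reduced, so its element $w'$ satisfies $\ell(w') = d' < d = \ell(w)$. By \eqref{birk1}, $p(g) \in U^+_k \tilde{w}' B^-_k$. But also $p(g) \in U^+_k \tilde{w} B^-_k$, since $p(\ov{U}^+_R \tilde{w} \ov{B}^-_R \Ker(p)) = U^+_k\tilde{w}B^-_k$. Disjointness of the Birkhoff decomposition of $G_k$ then forces $w = w'$, contradicting $\ell(w') < \ell(w)$. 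Hence $g \in B(w)$, which completes the chain and proves \eqref{inverse image Birkhoff classes}.

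\emph{Decomposition.} Since $G_k = \coprod_{w} U^+_k \tilde{w} B^-_k$, taking preimages under $p$ gives $G^{\min}_R = \coprod_w p^{-1}(U^+_k\tilde{w}B^-_k) = \coprod_w B(w)$ by \eqref{inverse image Birkhoff classes}. The preimages are pairwise disjoint because the double cosets are.

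The only delicate points are the surjectivity of $p$ on $\ov{U}^+_R$ and $\ov{B}^-_R$, which holds since $R \to k$ is onto and units of $k$ lift to units of the local ring $R$ (needed for the torus), and the commutation with $\ov{U}^+_L$. Both are already used in the proof of \Cref{uniqueness birkhoff classes}, so I expect no genuine obstacle beyond careful bookkeeping.
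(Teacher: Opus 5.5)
Your proof is correct, but it runs the logic in the opposite direction from the paper's.

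\textbf{The paper's route.} It first shows $G^{\min}_R=\bigcup_w B(w)$ using \Cref{generated by simples 2 spherical}: every $g$ lies in some $\ov{G}_{i_1R}\cdots\ov{G}_{i_dR}T_R$ and hence has a Birkhoff expression. It gets disjointness from \Cref{uniqueness birkhoff classes}\eqref{birk1} and the Birkhoff decomposition of $G_k$. It then obtains $B(w)=p^{-1}(U^+_k\tilde{w}B^-_k)$ formally, by a complement argument from this partition. The further equality with $\ov{U}^+_R\tilde{w}\ov{B}^-_R\Ker(p)$ is left implicit.

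\textbf{Your route.} You prove the set equality directly, and the partition is then just the pullback of the partition of $G_k$. The key inclusion $p^{-1}(U^+_k\tilde{w}B^-_k)\subseteq B(w)$ has two parts. First, you rerun the manipulation from the proof of \eqref{birk3}, namely \Cref{Kernel p} together with $\ov{G}_{iR}\ov{U}^+_L=\ov{U}^+_L\ov{G}_{iR}$, to produce a Birkhoff expression of type $\mathbf{w}$. Second, you certify that this expression has minimal length via \eqref{birk1} and the comparison $\ell(w')\le d'<d=\ell(w)$.

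\textbf{What each buys.} Your route does not need the generation lemma at this step, since covering is inherited from $G_k$. It also shows directly that $B(\mathbf{w})$ is the same for every reduced expression of $w$. And it makes explicit the surjectivity $p(\ov{U}^+_R\tilde{w}\ov{B}^-_R)=U^+_k\tilde{w}B^-_k$ that underlies the second equality. The paper's route is shorter once \Cref{uniqueness birkhoff classes} is available, and it avoids repeating the kernel/commutation computation.
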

\begin{proof}
By \Cref{generated by simples 2 spherical}, $G^{\min}_R$ is generated by the groups $T_R$ and $\ov{G}_{iR}$ for $i \in I$. Since $T_R$ normalises each $\ov{G}_{iR}$, for every element $g \in G^{\min}_R$ there is some sequence $(i_1, \ldots, i_d) \in I^d$ such that
\[ g \in \ov{G}_{i_1 R} \ldots \ov{G}_{i_d R} T_R,\]
and hence $g$ has a Birkhoff decomposition. In particular, $g \in B(w)$ for some $w \in \mc{W}$.
It follows that $G^{\min}_R = \bigcup_{w \in \mc{W}} B(w)$.

Let $w \neq w' \in \mc{W}$ and suppose $g \in B(w) \cap B(w')$. Then by \Cref{uniqueness birkhoff classes}\eqref{birk1}, \[p(g) \in U^+_k \tilde{w} B^-_k \cap U^+_k \tilde{w}' B^-_k,\]
a contradiction by the Birkhoff decomposition of $G_k$. Hence $B(w) \cap B(w') = \emptyset$. The decomposition \eqref{eq Birkhoff local} follows. By \Cref{uniqueness birkhoff classes}\eqref{birk1}, $B(w)$ surjects onto $U^-_k \tilde{w} B^+_k$, and hence $B(w) \subseteq p^{-1}(U^-_k \tilde{w} B^+_k)$. Conversely, if $g \not\in B(w)$, then by \eqref{eq Birkhoff local}, $g \in B(w')$ for some $w' \neq w$. Then again by \Cref{uniqueness birkhoff classes}\eqref{birk1}, $p(g) \in U^-_k \tilde{w}' B^+_k$, and hence $p(g) \not \in U^-_k \tilde{w} B^+_k$, by the Birkhoff decomposition of $G_k$. In particular, $g \not\in p^{-1}(U^-_k \tilde{w} B^+_k)$. This shows \eqref{inverse image Birkhoff classes}. 
\end{proof}

\begin{remark}
Note that using the Cartan--Chevalley involution, $G^{\min}_R$ also has a decomposition  $G^{\min}_R = \coprod_{w\in \mc{W}} \tilde{B}(w)$, where, for $w = s_{i_1}\ldots s_{i_d}$ of length $d$,
\begin{equation*}
\tilde{B}(w) := p^{-1}(U^-_k \tilde{w} B^+_k) = \ov{U}^-_R \tilde{w} \ov{B}^+_R \Ker(p) .
\end{equation*}
\end{remark}

\section{\texorpdfstring{Residual filtrations of the chamber system of $G^{\min}_R$}{Residual filtrations of the chamber system of Gmin(R)}}
\label{sect filtration}
In this section, we consider the chamber system associated to $G^{\min}_R$. In particular, we will construct a \emph{filtration} of this chamber system, and use the results from \cite{DevillersMu} to reduce the question of obtaining a presentation of an arbitrary $\ov{U}^+_R$ of simply laced type to obtaining a presentation for $U^+_R$ of type $A_3$ (in case $R$ is a local Bézout domain).
\subsection{Buildings and chamber systems}
\label{sect buildings}
A \emph{chamber system} is a set $C$ together with a collection $(\sim_i~)_{i \in I}$ of equivalence relations on $C$, such that $c \sim_i d$ and $c \sim_j d$ if and only if $i = j$ or $c = d$. Elements of $C$ are called \emph{chambers}, and two chambers $c,d \in C$ are called \emph{$i$-adjacent} if $c\sim_i d$. A chamber system is said to be \emph{connected} if any two chambers $c,d \in C$ can be joined by a \emph{gallery} \[c = c_1 \sim_{i_1} c_2 \sim_{i_2} \ldots \sim_{i_{k}} c_{k+1} = d.\] The \emph{type} of such a gallery is the tuple $(i_1, \ldots, i_k)$.  Let $J\subseteq I$. A \emph{$J$-gallery} is a gallery $c~\sim_{j_1}~\ldots~\sim_{j_k}d$ such that $j_1, \ldots , j_k \in J$. For $c \in C$, the \emph{$J$-residue} of $c$ is the set $\mc{R}_J(c)$ of chambers $d \in C$ which can be joined to $c$ by a $J$-gallery. Then $(\mc{R}_J(c), (\sim_j)_{j \in J})$ is also a chamber system. 
\begin{example}
\label{example chamber system structure}
Let $R$ be a ring and let $A = (A_{ij})_{i\in I}$ be $2$-spherical. Then we associate a chamber system $\mc{C}^-(G^{\min}_R)$ to $G^{\min}_R$ by taking the cosets $G^{\min}_R/\ov{B}^-_R$ as the set of chambers, and for any $i \in I$, letting $g\ov{B}^-_R$ be $i$-adjacent to $h \ov{B}^-_R$ if and only if $h^{-1} g \in \ov{P}^-_{iR}$. In particular, \[g\ov{B}^-_R \sim_i h\ov{B}^-_R \iff \text{ there exists } \quad g_i \in \ov{G}_{iR} \quad \text{ such that }  \quad h\ov{B}^-_R = g g_{i} \ov{B}^-_R.\] We will write 
\begin{equation}
g c_0^- := g \ov{B}^-_R \quad \text{for the chamber in } \mc{C}^-(G^{\min}_R) \text{ given by the coset } g\ov{B}^-_R\in G^{\min}_R/\ov{B}^-_R.
\end{equation}
The group $G^{\min}_R$ acts on its chamber system by left multiplication, and this action preserves $i$-adjacency for any $i \in I$. In other words, $G^{\min}_R$ acts on $\mc{C}^-(G^{\min}_R)$ by chamber automorphisms. Stabilisers of $J$-residues of the chamber $c_0^-$ are given by the parabolic subgroups $\ov{P}_{JR}^-$. 
If $R = \K$ is a field, the chamber system $\mc{C}^-(G_{\K}) = \mc{C}^-(G^{\min}_{\K})$ is moreover a building (see \cite[Corollary~7.70]{MarquisKMGbook}).
\end{example}

A chamber system is said to be \emph{simply connected} (or more precisely, \emph{simply $2$-connected}) if every gallery which starts and ends in the same chamber $c$, is \emph{$2$-homotopic} to the gallery $(c)$. Here a $2$-homotopy consists of a finite sequence of \emph{elementary $2$-homotopies}, which in turn consist of replacing a $J$-subgallery (with $\abs{J} \leq 2$) connecting two chambers by another $J$-subgallery connecting the same two chambers (see \cite[§2]{DevillersMu} for the precise definition).

An important feature of the theory of groups acting on chamber systems, is the following. 
\begin{proposition}[{\cite[Corollaire  1]{Tits86}}]
\label{Tits presentation}
Let $\mc{C}:= (C, (\sim_i)_{i \in I})$ be a chamber system, and let $G$ be a group acting transitively on $C$ such that the action preserves $i$-adjacency. Let $c \in C$. Then the following are equivalent:
\begin{enumerate}[(a)]
\item $\mc{C}$ is simply connected;
\item $G$ is the amalgamated product of the stabilisers \[G_J := \mathrm{Stab}_{G}(\mc{R}_J(c)) = \{g \in G \mid g\cdot c' \in \mc{R}_J(c),  \text{ for all } c' \in \mc{R}_J(c)\},\] for all $J\subseteq I$ with $\abs{J} \leq 2$, along the canonical inclusions  $G_{J_1} \subseteq G_{J_2}$ for $J_{1} \subseteq J_2\subseteq I$. 
\end{enumerate}
\end{proposition}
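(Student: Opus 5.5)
The approach is the classical covering argument. Write $B := G_\emptyset = \mathrm{Stab}_G(c)$, $G_i := G_{\{i\}}$ and $G_{ij} := G_{\{i,j\}}$. Let $\hat G$ be the amalgamated product (colimit) of the system $(G_J)_{\abs{J}\leq 2}$ along the inclusions, and let $\pi\colon \hat G \to G$ be the canonical homomorphism induced by the inclusions $G_J \subseteq G$. Write $\hat G_J$ for the image of $G_J$ in $\hat G$. Each $G_J$ embeds in $\hat G$, since composing with $\pi$ recovers the inclusion $G_J \subseteq G$. Condition (b) says precisely that $\pi$ is an isomorphism.

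\textbf{Step 1: identify $C$ as a coset chamber system.} By transitivity, $gB \mapsto g\cdot c$ is a $G$-equivariant bijection $G/B \to C$. Under this bijection, $gB \sim_i hB$ if and only if $g^{-1}h \in G_i$. Indeed, $\mc{R}_{\{i\}}(c)$ is the $i$-panel of $c$, and $G_i$ acts transitively on it: if $h\cdot c\sim_i c$, then $h$ maps the panel of $c$ to the panel of $h \cdot c$, which is the same panel.

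Likewise, define $\hat C := \hat G/\hat B$ with $x\hat B \sim_i y \hat B$ if and only if $x^{-1}y \in \hat G_i$. One checks that this is a chamber system; the condition $\hat G_i \cap \hat G_j = \hat B$ for $i\neq j$ follows from $\pi$ and the corresponding fact $G_i\cap G_j = B$ in $G$. The map $\pi$ induces a $\hat G$-equivariant, type-preserving morphism $\hat\pi\colon \hat C \to C$.

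\textbf{Step 2: show $\hat\pi$ is a 2-covering and $\hat C$ is simply connected.} For the covering property, show that $\hat\pi$ restricts to an isomorphism from every $J$-residue with $\abs{J}\le 2$ onto its image. Up to translation, the $J$-residue of $\hat B$ is $\hat G_J/\hat B$, using that $\hat G_J$ is generated by the $\hat G_j$ for $j\in J$. The latter holds because $G_J$ is generated by the $G_j$ for $j \in J$, which is just connectedness of the residue $\mc{R}_J(c)$. Since $\pi$ maps $\hat G_J/\hat B$ bijectively onto $G_J/B \cong \mc{R}_J(c)$, the restriction is an isomorphism.

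Simple connectedness of $\hat C$ is the key computation. A closed gallery based at $\hat B$ with successive types $i_1,\dots,i_k$ corresponds to a word $g_1\cdots g_k$ with $g_\ell\in \hat G_{i_\ell}$ and $g_1\cdots g_k\in\hat B$. Elementary $2$-homotopies inside a rank $2$ residue correspond to rewriting a subword by means of relations holding in a single $\hat G_{ij}$. Since $\hat G$ is presented by exactly these relations, any word representing an element of $\hat B$ can be reduced by such moves to a word of length one. Hence every closed gallery is null-homotopic.

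\textbf{Step 3: conclude both implications.} The main obstacle is this step: the correct relationship between homotopy of galleries and coverings, namely that a $2$-covering of a connected chamber system with simply connected source is a universal cover.

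For (b)$\Rightarrow$(a): if $\pi$ is an isomorphism, then $\hat C\cong C$, and $C$ is simply connected by Step 2.

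For (a)$\Rightarrow$(b): first, $C$ is connected, so $G$ is generated by the $G_i$, hence $\pi$ is surjective. Next, suppose $x\in\ker\pi$. Take a gallery in $\hat C$ from $\hat B$ to $x\hat B$. Its image under $\hat\pi$ is a closed gallery in $C$, which by (a) is null-homotopic. I would lift this null-homotopy step by step to $\hat C$, using that $\hat\pi$ is an isomorphism on rank $\le 2$ residues, so that each elementary homotopy lifts uniquely with the same endpoints. This shows the lifted endpoint is unchanged, whence $x\hat B=\hat B$, i.e. $x\in\hat B$. As $\pi$ is injective on $\hat B$, it follows that $x=1$, so $\pi$ is an isomorphism.

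The delicate points are the unique-lifting argument and the bookkeeping of the inclusions among the stabilisers.
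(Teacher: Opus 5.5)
Your proof is correct. The paper does not prove this statement; it quotes it from \cite[Corollaire 1]{Tits86}. Your argument is the standard covering-theoretic proof underlying Tits' result: build the coset chamber system $\hat G/\hat B$ of the amalgam, show that $\hat\pi$ is an isomorphism on residues of rank at most $2$ and that $\hat G/\hat B$ is simply connected, and then lift null-homotopies.

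The emphasis is slightly off, though. Step 3 is routine once the $2$-covering property is established, and the ``universal cover'' statement you mention is never actually used. The real bookkeeping is in Step 2. There, the cleanest route is to present $\hat G$ on the generators $\bigsqcup_{i}\hat G_i$, with relators all words in $\hat G_i\cup\hat G_j$ that are trivial in $G_{ij}$. This is a valid presentation of the colimit precisely because $G_{ij}=\langle G_i,G_j\rangle$, which you proved. With it, inserting or deleting a relator inserts or deletes a closed $\{i,j\}$-gallery, which is literally an elementary $2$-homotopy. You then never have to convert letters of $\hat G_{ij}$ into galleries.
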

See e.g.\ \cite[§1]{SerreTrees} for a precise definition of such an amalgamated product. To obtain a presentation of a group $G$ as an amalgamated product of a certain set of subgroups, it hence suffices to show that $G$ acts on a simply connected chamber system and to identify the stabilisers of the rank $2$ residues. To obtain chamber systems suited to our purposes, we need the following terminology.

\begin{definition}[{\cite[§3]{DevillersMu}}]
\label{def filt}
Let $\mc{C} = (C,(\sim_i)_{i \in I})$ be a chamber system. A \emph{filtration of $\mc{C}$} is a sequence $\mc{F} := (C_n)_{n \in \N}$ of subsets of $C$, satisfying the following conditions:
\begin{enumerate}[(i)]
\item $C_{n} \subseteq C_{n+1}$ for all $n \in \N$,
\item $\bigcup_{n\in \N} C_n = C$,
\item For every $n > 0$, if $C_{n-1} \neq\emptyset$, then there exists an $i \in I$ such that for all $c \in C_n$, there is some $c' \in C_{n-1}$ which is $i$-adjacent to $c$.
\end{enumerate}
The filtration $\mc{F}$ is called \emph{residual} if for every non-empty $J\subseteq I$ and any $J$-residue $\mc{R}_J$ in $\mc{C}$, the sequence $(C_n \cap \mc{R}_J)_{n \in \N}$ is a filtration of the chamber system $(\mc{R}_J, (\sim_{j})_{j \in J}))$. For any subset $X \subseteq C$, we define 
\begin{equation}
\aff(X) :=  C_{m_X} \cap X, \quad \text{ where } m_X := \min\{n \in \N \mid C_n \cap X \neq \emptyset\}.
\end{equation}
A residual filtration $\mc{F}$ is said to be \emph{locally connected} if it satisfies the following condition:
\begin{align}
\label{lco}
\tag{lco}  & \text{For any } J \subseteq I \text{ with } \abs{J} \leq 2, \text{ and any } J\text{-residue } \mc{R}_J, \text{ the chamber system }\\
 & (\aff(\mc{R}_J),(\sim_j)_{j \in J}) \text{ is connected}.\notag
\end{align}
Finally, $\mc{F}$ is said to be \emph{locally simply connected} if it satisfies:
\begin{align}
\label{lsco}
\tag{lsco}  & \text{For any } J \subseteq I \text{ with } \abs{J} \leq 3, \text{ and any } J\text{-residue } \mc{R}_J, \text{ the chamber system } \\
& (\aff(\mc{R}_J),(\sim_j)_{j \in J}) \text{ is simply connected}.\notag
\end{align}
\end{definition}
We will make crucial use of the following three facts.
\begin{proposition}[{\cite[Corollary 3.10 {\it \&} Theorem 3.14]{DevillersMu}}]
\label{314 devmu}
Let $\mc{F} = (C_n)_{n \in \N}$ be a residual filtration of a chamber system $\mc{C}$, with $C_0 \neq \emptyset$ and satisfying \eqref{lco} and \eqref{lsco}. Then the following are equivalent:
\begin{enumerate}[(i)]
\item $\mc{C}$ is simply connected.
\item $C_0$ is simply connected.
\end{enumerate}
\end{proposition}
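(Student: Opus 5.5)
The plan is to compare $C_n$ and $C_{n+1}$ for each $n$. We show that the inclusion $C_n \subseteq C_{n+1}$ induces a bijection on $2$-homotopy classes of closed galleries based at a fixed chamber $c \in C_0$. Since each gallery, and each $2$-homotopy (a finite sequence of elementary homotopies), involves only finitely many chambers, it lies in some $C_N$ by condition (ii) of \Cref{def filt}. Chaining the bijections $C_0 \to C_1 \to \cdots \to C_N$ then gives both implications. If $C_0$ is simply connected, a closed gallery in $C$ lies in some $C_N$, is pushed down to $C_0$ up to $2$-homotopy, and is null-homotopic there. If $\mc{C}$ is simply connected, a closed gallery in $C_0$ is null-homotopic in some $C_N$, and we pull this null-homotopy back down to $C_0$. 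For the pulling back, the real content is that two galleries in $C_n$ which are $2$-homotopic in $C_{n+1}$ are already $2$-homotopic in $C_n$.

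\textbf{Step 1: retract galleries.} Fix $n$ and let $i$ be the type given by condition (iii) for $C_{n+1}$. For each $d \in C_{n+1}\setminus C_n$, its $i$-panel $\mc{P}$ meets $C_n$. Hence $m_{\mc{P}} \leq n$ and $\aff(\mc{P}) \subseteq C_n$. Choose $\rho(d) \in \aff(\mc{P})$, and set $\rho(d) = d$ for $d \in C_n$. Given a gallery $d \sim_j d'$ in $C_{n+1}$, consider the $\{i,j\}$-residue $\mc{R}$ containing both $d$ and $d'$. It meets $C_n$, so $\aff(\mc{R}) \subseteq C_n$. Residuality applied to the $\{i\}$-panels inside $\mc{R}$ puts $\rho(d)$ and $\rho(d')$ in $\aff(\mc{R})$, after choosing $\rho$ compatibly; if needed, use the minimal level of the panel instead. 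By \eqref{lco}, $\aff(\mc{R})$ is connected, so we can join $\rho(d)$ to $\rho(d')$ by an $\{i,j\}$-gallery inside $C_n$.

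This defines a retraction $\rho_*$ of galleries in $C_{n+1}$ to galleries in $C_n$. The loop $d \to \rho(d) \to \rho(d') \to d' \to d$ lies in the rank $2$ residue $\mc{R}$, so it is an elementary $2$-homotopy. Hence every gallery in $C_{n+1}$ is $2$-homotopic in $C_{n+1}$ to its retraction, which gives surjectivity.

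\textbf{Step 2: retract homotopies (the main obstacle).} An elementary $2$-homotopy in $C_{n+1}$ replaces one $J$-subgallery by another, with $\abs{J}\leq 2$. Both versions lie in a single $J$-residue $\mc{R}_J$. Their retractions, together with the connecting pieces from Step 1, lie in the $(J\cup\{i\})$-residue $\mc{R}'$, which has rank at most $3$. Moreover $\mc{R}'$ meets $C_n$, so its retractions lie in $\aff(\mc{R}') \subseteq C_n$. By \eqref{lsco}, $\aff(\mc{R}')$ is simply connected. Therefore the two retracted galleries are $2$-homotopic inside $\aff(\mc{R}') \subseteq C_n$. Applying this to each step of a homotopy shows that $\rho_*$ respects $2$-homotopy. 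Since $\rho_*$ is the identity on galleries already in $C_n$, this gives injectivity.

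The delicate points are two. First, the choices of $\rho$ and of the connecting galleries must be well-defined up to homotopy inside $C_n$; this again uses \eqref{lsco}, on residues of rank $\leq 2$, which are contained in the rank $3$ ones. Second, the affine parts of the relevant residues must genuinely sit inside $C_n$, which requires checking residuality carefully so that $m_{\mc{R}} \leq n$ whenever $\mc{R}$ contains a chamber of $C_{n+1}\setminus C_n$.
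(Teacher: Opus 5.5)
Your architecture is the right one: a level-by-level retraction $\rho_*$ from galleries in $C_{n+1}$ to galleries in $C_n$, shown to be compatible with elementary $2$-homotopies, and then chained along $\bigcup_n C_n = C$. The gap is at the two places where the hypotheses are supposed to enter.

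\textbf{The false containments.} In Step 1 you claim $\rho(d),\rho(d')\in\aff(\mc{R})$ for the $\{i,j\}$-residue $\mc{R}$. In Step 2 you claim the retracted galleries lie in $\aff(\mc{R}')$. Both are false in general.
\begin{itemize}
\item $\aff(\mc{R}) = C_{m_{\mc{R}}}\cap\mc{R}$ is only the bottom layer of $\mc{R}$.
\item $\rho(d)\in\aff(\mc{P})\subseteq C_{m_{\mc{P}}}$, and typically $m_{\mc{P}}>m_{\mc{R}}$. In that case $\aff(\mc{P})\cap\aff(\mc{R})=\emptyset$.
\end{itemize}
A concrete example: take the thin Coxeter complex of type $A_2$, with $C_n$ the first $n+1$ chambers of the list $1,s,t,st,ts,sts$. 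At the top level $i=s$ (or $t$), so $\rho(sts)\in\{st,ts\}$. But the $\{s,t\}$-residue has affine part $\{1\}$. No ``compatible choice'' of $\rho$ can repair this, since $d$ lies in an $\{i,j\}$-residue for every $j$.

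So \eqref{lco} and \eqref{lsco}, which only speak about affine parts, are never actually applied to the galleries you build. Those galleries live in $C_n\cap\mc{R}$ and $C_n\cap\mc{R}'$. The point you flag as delicate, $m_{\mc{R}}\le n$, is in fact automatic for residues whose type contains $i$; the problematic containment is the reverse one.

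\textbf{The missing lemma.} You need to upgrade the local hypotheses from the affine layer to every layer:
\begin{itemize}
\item for a residual filtration satisfying \eqref{lco}, $C_n\cap\mc{R}$ is connected for every residue $\mc{R}$ of rank at most $2$ meeting $C_n$;
\item if moreover \eqref{lsco} holds, $C_n\cap\mc{R}$ is simply connected for every residue $\mc{R}$ of rank at most $3$ meeting $C_n$.
\end{itemize}
It is proved by running your Step 1 \emph{inside} $\mc{R}$. By residuality, $(C_m\cap\mc{R})_m$ is a filtration of $\mc{R}$. So chambers and closed galleries of $C_n\cap\mc{R}$ can be pushed down level by level, via elementary homotopies inside $C_n\cap\mc{R}$, to $C_{m_{\mc{R}}}\cap\mc{R}=\aff(\mc{R})$. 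There \eqref{lco}, resp.\ \eqref{lsco}, applies. In the rank $3$ case the connecting galleries needed along the way lie in $C_m\cap\mc{R}''$ for rank $2$ residues $\mc{R}''$. They are therefore supplied by the rank $2$ statement, so there is no circularity.

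With this lemma, replace $\aff(\mc{R})$ by $C_n\cap\mc{R}$ in Step 1 and $\aff(\mc{R}')$ by $C_n\cap\mc{R}'$ in Step 2. Then $\rho_*$ respects $2$-homotopy, and your chaining argument goes through. The same retraction also gives the connectedness part of ``simply connected''. If you fix the connecting gallery once per edge, $\rho_*$ is compatible with concatenation, and no separate well-definedness argument is needed.
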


\begin{proposition}[{\cite[Theorem 4.3]{Ronan}}] 
\label{buildings are sc}
Buildings are simply connected chamber systems.
\end{proposition}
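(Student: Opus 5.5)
The plan is the classical induction on distance from a base chamber (as in Ronan's proof), using the $W$-distance $\delta$ of the building $\mc{C}$ of type $(W,S)$ and gate property of projections onto residues. Fix a closed gallery $\gamma = (c = c_0 \sim_{i_1} c_1 \sim \ldots \sim_{i_k} c_k = c)$. For a chamber $x$, put $d(x) := \ell(\delta(c,x))$. Attach to $\gamma$ the pair $(N,m)$, where $N$ is the maximum of $d(c_t)$ along $\gamma$ and $m$ is the number of indices $t$ with $d(c_t) = N$. I will show that if $N>0$, then $\gamma$ is $2$-homotopic to a closed gallery at $c$ with lexicographically smaller pair $(N,m)$. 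Since a gallery with $N=0$ is constant and hence homotopic to $(c)$, this proves the statement.

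Choose $t$ with $d(c_t)=N>0$, and let $x=c_{t-1}\sim_i y=c_t \sim_j z=c_{t+1}$. After an elementary $1$-homotopy I may first delete stutters and merge consecutive steps of the same type, so I may assume $x\neq y\neq z$. There are two cases.

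\emph{Case $i=j$.} Then $x,y,z$ lie in a common $i$-panel $P$. Replace $x\sim_i y\sim_i z$ by $x\sim_i z$, or by $(x)$ if $x=z$; this is an elementary homotopy inside a rank $1$ residue.

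\emph{Case $i\neq j$.} Let $\mc{R}$ be the $\{i,j\}$-residue of $y$ and let $e:=\mathrm{proj}_{\mc{R}}(c)$. By the gate property, $d(u)=d(e)+\ell(\delta(e,u))$ for all $u\in\mc{R}$. Hence inside $\mc{R}$ the function $d$ is, up to a constant, the distance from $e$. In particular $d(x),d(z)\le N$ means that $x,z$ are no farther from $e$ than $y$. The residue $\mc{R}$ is a rank $2$ building: a generalised $m_{ij}$-gon, or a tree if $m_{ij}=\infty$. I will use its geometry to produce a gallery $\gamma'$ from $x$ to $z$ inside $\mc{R}$ all of whose chambers $u$ satisfy $\ell(\delta(e,u))<\ell(\delta(e,y))$, except possibly the endpoints.

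\begin{itemize}
\item If $\ell(\delta(e,x))<\ell(\delta(e,y))$ and likewise for $z$, concatenate a minimal gallery from $x$ to $e$ with a minimal gallery from $e$ to $z$. Every interior chamber of this gallery has strictly smaller distance to $e$. (If $y=e$ then $N=d(e)$ and this subcase is vacuous.)
\item If instead $x$ is at the same distance from $e$ as $y$, then $e$ projects onto the $i$-panel of $y$ in a third chamber $y'$ strictly closer to $e$. In that case I first replace $x\sim_i y$ by $x\sim_i y'\sim_i y$, which is an elementary homotopy within the panel. After this, the neighbour of $y$ is strictly closer, and I do the same on the side of $z$ if needed.
\end{itemize}

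Replacing $x\sim y\sim z$ by $\gamma'$ is then an elementary $2$-homotopy, since both are $\{i,j\}$-galleries with the same endpoints. It removes the occurrence $c_t$ at level $N$ and creates only chambers with $d<N$, so $(N,m)$ decreases.

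The step I expect to be the main obstacle is this rank $2$ local analysis in the case $i\neq j$, and specifically the bookkeeping in the equal-distance subcase. There one must check that the panel replacements do not introduce new chambers at level $N$. This holds because the projection $y'$ satisfies $d(y')=N-1$. One must also check that in a tree-type residue ($m_{ij}=\infty$) the same geodesic argument goes through; it does, since there $\ell(\delta(e,\cdot))$ is simply the graph distance. The gate property $d(u)=d(\mathrm{proj}_{\mc{R}}c)+\ell(\delta(\mathrm{proj}_{\mc{R}}c,u))$ is the standard building-theoretic input I will assume, e.g.\ from \cite{AbramenkoBrown}.
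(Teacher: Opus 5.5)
Your proof is correct. The paper gives no argument of its own for this statement and simply cites Ronan, so the useful comparison is with the standard proofs.

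Your peak-lowering argument needs only the gate property $d(u)=d(\mathrm{proj}_{\mc{R}}c)+\ell(\delta(\mathrm{proj}_{\mc{R}}c,u))$ for $u\in\mc{R}$ and its panel case. It also uses that minimal galleries between chambers of $\mc{R}$ are $\{i,j\}$-galleries, since reduced words for elements of $\mc{W}_{\{i,j\}}$ only involve $s_i,s_j$. Your bookkeeping is right: every chamber you insert has $d\le N-1$, the chambers $x$ and $z$ are kept, and $y$ is removed, so $m$ drops by one. A few simplifications:
\begin{itemize}
\item The tree/generalised polygon picture is never used.
\item $y\neq e$ is automatic, since $x\neq y$ and $\ell(\delta(e,x))\le\ell(\delta(e,y))$.
\item The panel insertions on either side together with the detour through $e$ form one $\{i,j\}$-gallery from $x$ to $z$, so the whole step is a single elementary $2$-homotopy.
\end{itemize}

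The other common proof works on types rather than distances. One inducts on the length of the gallery and realises each braid move (M2) on the gallery itself. This is possible because a non-stammering gallery of the reduced type $\mathbf{t}(i,j)$ is minimal, so it can be exchanged for the unique gallery of type $\mathbf{t}(j,i)$ with the same extremities, which lies in the same rank $2$ residue. One then cancels an $(s_i,s_i)$ inside a panel. That route rests on Tits' solution of the word problem; yours rests on projections.

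Your route has the advantage of fitting the machinery of this paper. It is exactly \Cref{314 devmu} applied to the filtration of a building by $\abs{\delta(c,\cdot)}$. For that filtration the gate property gives $\aff(\mc{R}_J)=\{\mathrm{proj}_{\mc{R}_J}c\}$ for every residue, so \eqref{lco} and \eqref{lsco} hold trivially and $C_0=\{c\}$. The paper's filtration of $\mc{C}^-(G^{\min}_R)$ is the codistance analogue of this, where $\aff(\mc{R}_J)$ is no longer a single chamber but a translate of $\ov{U}^+_{JR}c_0^-$.
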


\begin{proposition}[{\cite[Lemma 8.7]{MMinject}}]
\label{coincides w building of K}
Let $R$ be a local Bézout domain with field of fractions $\K$, satisfying \eqref{co} in case $A$ is not spherical. Then the canonical map 
\[\mc{C}^-(G^{\min}_R) \to \mc{C}^-(G_{\K}), \quad g \ov{B}^-_R \to g B_{\K}^-\]
is an isomorphism of chamber systems.
\end{proposition}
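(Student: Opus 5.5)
The plan is to verify directly that the map $\Psi\colon g\ov{B}^-_R \mapsto gB^-_{\K}$ is a well-defined bijection which preserves and reflects $i$-adjacency for every $i\in I$. Throughout we view $G^{\min}_R$ as a subgroup of $G_{\K}$. The map is well defined and $G^{\min}_R$-equivariant because $\ov{B}^-_R \subseteq B^-_{\K}$. Adjacency is preserved because $\ov{G}_{iR}\subseteq G_{i\K}$. It then remains to prove three things:
\begin{enumerate}[(a)]
\item surjectivity, i.e.\ $G_{\K} = G^{\min}_R B^-_{\K}$;
\item injectivity, i.e.\ $G^{\min}_R \cap B^-_{\K} = \ov{B}^-_R$;
\item reflection of adjacency, i.e.\ $G^{\min}_R \cap P^-_{i\K} = \ov{P}^-_{iR}$.
\end{enumerate}
Once (a) and (b) are established, (c) reduces to the rank one statement $G^{\min}_R\cap G_{i\K}B^-_{\K} \subseteq \ov{G}_{iR}\ov{B}^-_{R}$. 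I would prove that statement by the same argument as (b), applied inside the Levi factor.

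For (a), set $X := G^{\min}_R B^-_{\K}$ and show that $X$ is stable under right multiplication by $G_{i\K}$ for each $i$ and by $T_{\K}$. Since $G_{\K}$ is generated by these subgroups and $1\in X$, this gives $X=G_{\K}$. Stability under $T_{\K}\subseteq B^-_{\K}$ is clear. For $G_{i\K}$, \Cref{levi decomp} over $\K$ gives $B^-_{\K}G_{i\K}=G_{i\K}B^-_{\K}$, so it suffices to have $G_{i\K}\subseteq \ov{G}_{iR}B^-_{\K}$. Via $\varphi_{i\K}$ this is the rank one Iwasawa decomposition $\SL_2(\K)=\SL_2(R)B_2^-(\K)$. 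I would prove it using that $R$ is a valuation ring: among the two entries of a row of a matrix in $\SL_2(\K)$, one divides the other in $R$. Hence right multiplication by a lower triangular matrix over $\K$ rescales that row to a unimodular row in $R^2$, which can then be completed to an element of $\SL_2(R)$.

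For (b), I would use \Cref{thm birkhoff}. Let $g\in G^{\min}_R\cap B^-_{\K}$, so that $g\in B(w)$ for a unique $w\in\mc{W}$. The goal is to lift the conclusion of \Cref{uniqueness birkhoff classes}\eqref{birk1} from $k$ to $\K$, i.e.\ to show that $g\in U^+_{\K}\tilde{w}B^-_{\K}$. Once this holds, the Birkhoff decomposition of $G_{\K}$ forces $w=1$, so that $g\in B(1)=\ov{U}^+_R\ov{B}^-_R$. Writing $g=u_+b_-$, we then get $u_+=gb_-^{-1}\in \ov{U}^+_R\cap B^-_{\K}=\{1\}$ by \eqref{Bir}, and hence $g=b_-\in\ov{B}^-_R$. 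To obtain $g\in U^+_{\K}\tilde{w}B^-_{\K}$, I would normalise a reduced Birkhoff expression $g=u_+g_1\cdots g_db_-$ using the local ring structure, as in the proof of \Cref{uniqueness birkhoff classes}\eqref{birk1}. By minimality, each $g_j$ has bottom right entry in $L$, and hence a unit bottom left entry. Such a $g_j$ can be written in $\varphi_{i_jR}(U^+_2(R)\begin{psmallmatrix}0&1\\-1&0\end{psmallmatrix}B^-_2(R))$. I would then push the $B^-_R$-factors to the right inductively, exactly as in the induction of \eqref{birk1}, but now tracking Birkhoff cells over $\K$ through \eqref{product Birkhoff classes} rather than over $k$.

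The main obstacle is this last step. A factor in $\ov{G}_{iR}$ that is \emph{not} in $\ov{U}^+_R\ov{B}^-_R$ (a condition taken over $R$) may still lie in $U^+_{\K}B^-_{\K}$, since its bottom right entry can be a nonzero non-unit. For this reason, \eqref{product Birkhoff classes} over $\K$ might drop the length, and the naive induction does not directly control the $\K$-cell. If this happens, I would instead exploit the valuation directly. I would first modify $g_j$ by elements of $\ov{U}^+_R$ on the left and $\ov{B}^-_R$ on the right, so that its bottom right entry becomes exactly $0$; the unit bottom left entry should allow this. The factor is then literally $\tilde{s}_{i_j}$ times an element of $\ov{B}^-_R$, which yields $g\in \ov{U}^+_R\tilde{s}_{i_1}\ov{B}^-_R\tilde{s}_{i_2}\cdots$. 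Finally, I would argue by induction on $d$, comparing with the cell of $p(g)$ and using \Cref{Kernel p}, that the $\K$-cell of $g$ has length at least that of its reduction. This forces $g$ to lie in the big cell $U^+_{\K}\tilde{w}B^-_{\K}$ of the correct length, as required.
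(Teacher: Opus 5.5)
The paper does not prove this statement; it is quoted from \cite[Lemma 8.7]{MMinject}. Your proposal therefore has to stand on its own, and it has a genuine gap in step (b).

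Steps (a) and (c) are fine. In (a) the rank one Iwasawa argument works once ``row'' is read as ``second column'', since right multiplication by $B^-_2(\K)$ rescales the second column. Step (c) follows at once from (a) and (b): writing $g=yb$ with $y\in\ov G_{iR}$ and $b\in B^-_{\K}$ gives $b=y^{-1}g\in G^{\min}_R\cap B^-_{\K}=\ov B^-_R$.

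Step (b) is the real content, and your argument never uses $g\in B^-_{\K}$. It tries to show that every $g\in B(w)$ lies in $U^+_{\K}\tilde w B^-_{\K}$, and this is false already in rank one. Take $\pi\in L\setminus\{0\}$ and $g=\varphi_{iR}\begin{psmallmatrix}0&-1\\1&\pi\end{psmallmatrix}$. Then $p(g)\in U^+_k\tilde s_iB^-_k$, so $g\in B(s_i)$ by \Cref{thm birkhoff}, yet
\[ g=\varphi_{i\K}\Big(\begin{psmallmatrix}1&-\pi^{-1}\\0&1\end{psmallmatrix}\begin{psmallmatrix}\pi^{-1}&0\\1&\pi\end{psmallmatrix}\Big)\in U^+_{\K}B^-_{\K}. \]
The same example breaks your intermediate claims.
\begin{enumerate}[(i)]
\item Such a factor is not in $\varphi_{iR}\big(U^+_2(R)\begin{psmallmatrix}0&1\\-1&0\end{psmallmatrix}B^-_2(R)\big)$, because every element of that set has bottom right entry $0$.
\item It cannot be normalised to have bottom right entry $0$. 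Left multiplication by upper unitriangular matrices fixes the bottom row, and right multiplication by $B^-_2$ only rescales that entry by a unit.
\item More intrinsically, $U^+_{\K}B^-_{\K}$ is disjoint from $\ov U^+_R\tilde s_i\ov B^-_R\subseteq U^+_{\K}\tilde s_iB^-_{\K}$.
\end{enumerate}

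Your fallback semicontinuity claim is also backwards. Expanding a reduced Birkhoff expression of $g$ over $\K$ using \eqref{product Birkhoff classes} shows that the $\K$-Birkhoff cell of $g$ has length \emph{at most} $\ell(\delta(g))$. The example shows this inequality can be strict. So no cell-by-cell comparison with the Birkhoff decomposition of $G_{\K}$ can deduce $\delta(g)=1$ from $g\in B^-_{\K}$.

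What is missing is an integrality input showing that $g\in G^{\min}_R\cap B^-_{\K}$ forces $p(g)\in B^-_k$. Once you have it, $g\in p^{-1}(U^+_kB^-_k)=B(1)=\ov U^+_R\ov B^-_R$, and your last step via \eqref{Bir} finishes.

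One possible route (a sketch, not taken from the paper) uses a representation.
\begin{enumerate}[(i)]
\item Let $G^{\min}_R$ act on the $R$-form $V_R$ of an integrable lowest-weight module with regular lowest weight $-\lambda$ and lowest weight vector $v_{-\lambda}$.
\item Since $g^{\pm1}$ preserve $V_R$ and $V_R\cap\K v_{-\lambda}=Rv_{-\lambda}$, the element $g$ scales $v_{-\lambda}$ by a unit of $R$.
\item Hence $p(g)$ stabilises the line $k v_{-\lambda}$. By regularity of $\lambda$ and the Bruhat decomposition of $G_k$, the stabiliser of that line is $B^-_k$.
\end{enumerate}
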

In particular, in view of \Cref{buildings are sc} and the fact that $\mc{C}^-(G_{\K})$ is a building by \Cref{example chamber system structure}, under the hypotheses of \Cref{coincides w building of K}, $\mc{C}^-(G^{\min}_R)$ is simply connected. In the following section, we will construct a suitable filtration of $\mc{C}^-(G^{\min}_R)$ such that $\ov{U}^+_R$ acts transitively on $C_0$, and the latter will be simply connected by \Cref{314 devmu}. We will then apply \Cref{Tits presentation} to obtain \Cref{main theorem}.

\subsection{\texorpdfstring{Filtration of $\mc{C}^-(G^{\min}_R)$}{Filtration of C(GminR)}}

We define the map 
\begin{equation}
\delta \colon \mc{C}^-(G^{\min}_R) \to \mc{W}, \quad \text{ via } \quad \delta(g c_0^-) = w \iff g \in B(w).
\end{equation}
This is well-defined by \eqref{eq Birkhoff local} and the definition of $B(w)$ (see \eqref{def Bw}). As in \cref{sect prelim}, let $\ell$ denote the length function on $\mc{W}$, with respect to $\{s_i\mid i \in I\}$.

\begin{lemma}
\label{lemma gate properties}
Let $R$ be a local ring, with residue field $k$. Let $J \subseteq I$ and let $\mc{R}_J$ be a $J$-residue in $\mc{C}^-(G^{\min}_R)$. Let $gc_0^- \in \mc{R}_J$ with $\delta(gc_0^-) = w$. Set moreover \[l := \min \left\{\ell(\delta(h c_0^-)) \mid hc_0^-\in \mc{R}_J\right\}.\] Then the following hold:
\begin{enumerate}[(a)]
\item For any $h c_0^- \in \mc{R}_J$, one has $\delta(hc_0^-) \in w \mc{W}_J$. \label{lemma 1 item a} In particular,
\begin{equation}
\label{description residues}
\left\{\delta(h c_0^-) \mid hc_0^- \in \mc{R}_J \right\} = w \mc{W}_J.
\end{equation}

\item \label{lemma 1 item b} $\aff(\mc{R}_J) = \left\{hc_0^- \in \mc{R}_J \mid \ell(\delta(hc_0^-)) = l \right\} 
 =  \left\{h c_0^- \in \mc{R}_J \mid \delta(hc_0^-) = w_J \right\}$,
where $w_J$ is the unique element of $w \mc{W}_J$ of minimal length.
\end{enumerate}
\end{lemma}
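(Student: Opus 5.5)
Throughout, I use the description $B(w) = p^{-1}(U^+_k \tilde{w} B^-_k)$ from \Cref{thm birkhoff}: $\delta(hc_0^-)$ is the Birkhoff cell of $p(h)$ in $G_k$. So everything about $\delta$ on a residue can be read off in $G_k$.

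For \eqref{lemma 1 item a}, write the residue as $\mc{R}_J = \mc{R}_J(gc_0^-)$. Any $hc_0^- \in \mc{R}_J$ satisfies $h \in g\ov{P}^-_{JR}$, and by definition of the chamber system this means $h\ov{B}^-_R = g g_1\cdots g_m\ov{B}^-_R$ with each $g_t \in \ov{G}_{j_t R}$, $j_t \in J$. I will induct on $m$, applying $p$. Each $p(g_t)$ lies in $G_{j_t k}$, which is contained in $B^-_k \cup B^-_k\tilde{s}_{j_t}B^-_k$ by the Bruhat decomposition of $\SL_2(k)$. The product rule \eqref{product Birkhoff classes} for Birkhoff cells over the field $k$ then gives
\[
U^+_k \tilde{v} B^-_k\, p(g_t)\, B^-_k \subseteq U^+_k \tilde{v}B^-_k \cup U^+_k \tilde{v}\tilde{s}_{j_t}B^-_k .
\]
Hence $p(h) \in U^+_k \tilde{v} B^-_k$ for some $v \in w\mc{W}_J$, so $\delta(hc_0^-) \in w\mc{W}_J$.

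For the equality \eqref{description residues} it remains to show every $v = wu$ with $u \in \mc{W}_J$ is realised. I will induct on $\ell(u)$, writing $u = u's_j$ with $j \in J$. Suppose $h'c_0^- \in \mc{R}_J$ has $\delta = wu'$. Then $h'\tilde{s}_j c_0^-$ and $h'x_{-\alpha_j}(1)\tilde{s}_jc_0^-$ both lie in $\mc{R}_J$.
\begin{itemize}
\item If $\ell(wu's_j) = \ell(wu') - 1$, then \eqref{product Birkhoff classes} gives $p(h')B^-_k\tilde{s}_jB^-_k \subseteq U^+_k\tilde{w}\tilde{u}'\tilde{s}_jB^-_k$, so $\delta(h'\tilde{s}_jc_0^-) = wu's_j$.
\item If the length goes up, I instead use the field fact that $U^+_k\tilde{v}B^-_k\tilde{s}_jB^-_k$ is the union of the two cells. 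Every element of $G_{jk}$ lies in $B^-_k$ or $B^-_k\tilde{s}_jB^-_k$, and lifts to $\ov{G}_{jR}$ via $\SL_2(R)\to\SL_2(k)$, which is surjective since $R$ is local. So I choose $g_j \in \ov{G}_{jR}$ with $p(h'g_j)$ in the desired cell $U^+_k\tilde{v}\tilde{s}_jB^-_k$.
\end{itemize}
Concretely, the two cases amount to realising $B^-_k\tilde{s}_jB^-_k$ as a union of cosets $b\tilde{s}_jB^-_k$, $b\in U_{-\alpha_j k}$, whose images land in different cells. This is the main technical point; I would use the standard rank-one argument for twin buildings, lifting elements of $G_{jk}$ to $\ov{G}_{jR}$.

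For \eqref{lemma 1 item b}, by \eqref{description residues} the set of values of $\delta$ on $\mc{R}_J$ is exactly the coset $w\mc{W}_J$. By the coset property recalled in \cref{sect coxeter}, $w_J$ is the unique element of minimal length there, so $l = \ell(w_J)$. Hence $\{\ell(\delta) = l\} = \{\delta = w_J\}$. It remains to identify this with $\aff(\mc{R}_J)$, which depends on the filtration. Presumably $C_n := \{hc_0^- \mid \ell(\delta(hc_0^-)) \leq n\}$, defined just after; then $m_{\mc{R}_J} = l$ and $\aff(\mc{R}_J) = C_l\cap\mc{R}_J$, which is the set where $\ell(\delta) = l$. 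The main obstacle is the surjectivity part of \eqref{lemma 1 item a}, specifically choosing the adjacent chamber in the length-increasing case.
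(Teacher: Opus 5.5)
Your proposal is correct. The inclusion $\delta(hc_0^-)\in w\mc{W}_J$ is essentially the paper's argument. The paper first collapses $p(g_1\cdots g_d)$ into a single Bruhat cell indexed by $\mc{W}_J$ via \eqref{product Bruhat classes} and then applies \eqref{product Birkhoff classes}; you apply \eqref{product Birkhoff classes} one panel at a time.

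The reverse inclusion, however, you obtain by a genuinely different and longer route. The paper does not induct at all. Since $R$ is local, $\ov{U}^+_R\to U^+_k$ and $\ov{B}^-_R\to B^-_k$ are onto, so $p(g)=p(u_+\tilde{w}b_-)$ for some $u_+\in\ov{U}^+_R$ and $b_-\in\ov{B}^-_R$. For each $w'\in\mc{W}_J$, the element $x:=b_-^{-1}\tilde{w}'$ lies in $\ov{P}^-_{JR}$, so $gxc_0^-\in\mc{R}_J$. Moreover $p(gx)=p(u_+\tilde{w}\tilde{w}')\in U^+_k\widetilde{ww'}B^-_k$, so $\delta(gxc_0^-)=ww'$ by \eqref{inverse image Birkhoff classes}. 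The point is that the residue is the whole orbit $g\ov{P}^-_{JR}c_0^-$ and $\ov{P}^-_{JR}\supseteq\ov{B}^-_R$. The factor $b_-$ is therefore simply cancelled, and no case split or rank-one analysis is needed.

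Your panel-by-panel induction also works, and it has the mild advantage of exhibiting explicit $J$-galleries along which $\delta$ moves as prescribed. Be aware of what its key step really uses. That $p(h')B^-_k\tilde{s}_jB^-_k$ meets the cell of $wu's_j$ is trivial: already $\tilde{v}\tilde{s}_j\in\tilde{v}B^-_k\tilde{s}_jB^-_k$, so the full ``union of two cells'' statement is not needed. But this only yields some $y\in B^-_k\tilde{s}_jB^-_k$. To get $y\in G_{jk}$ you need $B^-_k\tilde{s}_jB^-_k=U_{-\alpha_j k}\tilde{s}_jB^-_k$, as in your ``concretely'' remark. Once you have that, $y=b\tilde{s}_j$ with $b\in U_{-\alpha_j k}$ lifts to $U_{-\alpha_j R}\tilde{s}_j\subseteq\ov{G}_{jR}$. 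This works whether the length goes up or down, so your case distinction is superfluous.

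For (b), you guessed the filtration wrongly, though it does not matter. The paper's filtration is not by length but $C_n=\{gc_0^-\mid\abs{\delta(gc_0^-)}\le n\}$, for a fixed injection $\abs{\cdot}\colon\mc{W}\to\N$ with $\ell(w_1)<\ell(w_2)\Rightarrow\abs{w_1}<\abs{w_2}$. A pure length filtration would violate axiom (iii) of Definition~\ref{def filt}: by (a), chambers with $\delta=s_1$ and with $\delta=s_2$ can only drop to $C_0$ through different adjacencies. Your argument adapts verbatim. Since $w_J$ is the unique length-minimal element of $w\mc{W}_J$, it is also the unique $\abs{\cdot}$-minimal one. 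Hence $m_{\mc{R}_J}=\abs{w_J}$ and $\aff(\mc{R}_J)=\{hc_0^-\in\mc{R}_J\mid\delta(hc_0^-)=w_J\}$, which is exactly the paper's one-line deduction of (b) from (a).
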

\begin{proof}
Let $g c_0^-$ be as in the statement. Let moreover $h c_0^- \in \mc{R}_J$. Let $(i_1, \ldots, i_d)$ be the type of a $J$-gallery from $gc_0^-$ to $hc_0^-$. Then for $1 \leq t \leq d$, there exist $g_{t} \in \ov{G}_{i_{t}R}$ such that \[g c_0^- \sim_{j_1} g g_1 c_0^- \sim_{j_2}  \ldots \sim_{j_d} g g_1  \ldots g_d c_0^- = hc_0^-.\] 
Now, $p(g) \in U^+_k \tilde{w} B^-_k$ by \Cref{uniqueness birkhoff classes}. Moreover, since $g_t \in \ov{G}_{i_t R} $ for all $1 \leq t \leq d$, we have $p(g_t) \in B^-_k \sqcup B^-_{k} \tilde{s}_{i_t} B^-_k$. Hence by \eqref{product Bruhat classes}, there is some subword $(s_{j_1}, \ldots, s_{j_n})$ of $(s_{i_1}, \ldots ,s_{i_d})\in \{s_j \mid j \in J\}^d$ such that \[p(g_1\ldots g_d) \in B^-_k \tilde{s}_{j_1} \ldots \tilde{s}_{j_n} B^-_k.\]
It follows that 
\[p(h) \in U^+_k\tilde{w} B^-_k \tilde{s}_{j_1} \ldots \tilde{s}_{j_n} B^-_k, \]
and hence by \eqref{product Birkhoff classes}, there is some subword $\mathbf{w'}$ of  $(s_{j_1}, \ldots , s_{j_n})$ such that for the associated element $w' \in \mc{W}_J$, we have $p(h) \in U^+_k \tilde{w} \tilde{w}' B^-_k$. By \eqref{inverse image Birkhoff classes} it now follows that $h \in B(w w')$. In particular, $\delta(h c_0^-) \in w \mc{W}_J$. This shows one inclusion of \eqref{description residues}.

Conversely, by definition, every element $hc_0^-\in \mc{R}_J$ is of the form $g x c_0$ for some $x \in \ov{P}^-_{JR}$. Since $p(g) \in U^+_k \tilde{w} B^-_k$, there exist $u_+ \in \ov{U}^+_R$ and $b_- \in \ov{B}^-_R$ such that $p(g) = p(u_+ \tilde{w} b_-)$. Then, for any $w' \in \mc{W}_J$, setting $x := b_-^{-1}\tilde{w}' \in \ov{P}^-_{JR}$, we have that 
\[p(g)p(x) = p(u_+ \tilde{w} \tilde{w}') \in U^+_k \tilde{w} \tilde{w}' B^-_k.\]
In other words, for any $w' \in \mc{W}_J$, there is a chamber $h c_0^- = gx c_0^- \in \mc{R}_J$ such that $\delta(h c_0^-) = w w'$, which yields the other inclusion. This shows \textit{(\ref{lemma 1 item a})}. Now \textit{(\ref{lemma 1 item b})} follows from \textit{(\ref{lemma 1 item a})} and the fact that $w_J$ is the unique element of minimal length in $w \mc{W}_J$. 
\end{proof}
Fix an injection $\abs{\cdot} \colon \mc{W} \to \N$ satisfying: 
\begin{enumerate}[(i)]
\item $\abs{1_{\mc{W}}} = 0$,
\item $\ell(w_1) < \ell(w_2) \implies \abs{w_1} < \abs{w_2}$.
\end{enumerate}
\begin{proposition} 
\label{prop filtration}
\label{is residual}
Let $R$ be a local ring with residue field $k$. Let for any $n \in \N$, 
\begin{equation}
\label{definition filtration}
C_n := \left\{gc_0^-\in \mc{C}^-(G^{\min}_{R})  \mid \abs{\delta(gc_0^-)} \leq n\right\}.
\end{equation}
Then $(C_n)_{n \geq 0}$ is a residual filtration of $\mc{C}^-(G^{\min}_R)$. 
\end{proposition}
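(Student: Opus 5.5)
We check the three axioms of \Cref{def filt} for $(C_n)$, and then verify that the same axioms hold for $(C_n \cap \mc{R}_J)_n$ for every non-empty $J \subseteq I$ and every $J$-residue $\mc{R}_J$. Axiom (i) is immediate from \eqref{definition filtration}. For (ii), every chamber $gc_0^-$ has a well-defined $\delta(gc_0^-) \in \mc{W}$ by \eqref{eq Birkhoff local}, so it lies in $C_{\abs{\delta(gc_0^-)}}$. Both arguments apply verbatim to the intersections with a residue. The only real content is axiom (iii), and it is enough to prove it for residues, since $\mc{C}^-(G^{\min}_R)$ is itself the $I$-residue.

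Fix $J$, a $J$-residue $\mc{R}_J$, and $n>0$ with $C_{n-1}\cap \mc{R}_J \neq \emptyset$. Pick $gc_0^- \in \mc{R}_J$ with $\delta(gc_0^-)=w$. By \Cref{lemma gate properties}\eqref{lemma 1 item a}, the values of $\delta$ on $\mc{R}_J$ are exactly $w\mc{W}_J$. Since $\abs{\cdot}$ is injective, at most one $v \in w\mc{W}_J$ satisfies $\abs{v}=n$.

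If no such $v$ exists, then $C_n\cap\mc{R}_J = C_{n-1}\cap\mc{R}_J$. In that case any $j\in J$ works: a chamber $c$ in this set is $j$-adjacent to itself (adjacency is an equivalence relation), and $c \in C_{n-1}$.

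If such a $v$ exists, there are two cases. If $v = w_J$, then $\abs{v} \le \abs{u}$ for all $u\in w\mc{W}_J$, because $\abs{\cdot}$ respects strict length inequalities and $w_J$ is the unique minimal-length element. So $C_{n-1}\cap\mc{R}_J=\emptyset$, contradicting our assumption. Hence $v\neq w_J$, so $\ell(v)>\ell(w_J)$, and some $j\in J$ satisfies $\ell(vs_j)=\ell(v)-1$. For this $j$, we must show that every chamber $hc_0^-\in C_n\cap\mc{R}_J$ is $j$-adjacent to a chamber in $C_{n-1}\cap \mc{R}_J$. Chambers with $\abs{\delta}\le n-1$ are handled trivially as above. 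It remains to treat $hc_0^-$ with $\delta(hc_0^-)=v$.

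\textbf{Key step.} Write $h\in B(v) = \ov{U}^+_R\tilde v\ov{B}^-_R\Ker(p)$, using \eqref{inverse image Birkhoff classes}. Consider the chamber $h\tilde s_j c_0^-$. It is $j$-adjacent to $hc_0^-$ because $\tilde s_j\in\ov{G}_{jR}$, and it lies in $\mc{R}_J$. Its image satisfies
\[ p(h\tilde s_j) \in U^+_k\tilde v B^-_k \tilde s_j \subseteq U^+_k \tilde v B^-_k \tilde s_j B^-_k = U^+_k\tilde v\tilde s_j B^-_k, \]
where the equality is \eqref{product Birkhoff classes} applied with $\ell(vs_j)=\ell(v)-1$. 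By \eqref{inverse image Birkhoff classes}, $\delta(h\tilde s_jc_0^-)=vs_j$. Since $\ell(vs_j)<\ell(v)$, we get $\abs{vs_j}<\abs{v}=n$, so this chamber lies in $C_{n-1}\cap\mc{R}_J$. This settles (iii).

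The main obstacle is making sure the relevant Birkhoff identity over $k$ is really available. The key step needs the reverse direction of \eqref{product Birkhoff classes}: that multiplying by $s$ lands exactly in the shorter class, not merely in the union of two classes. Everything else is bookkeeping with \Cref{lemma gate properties} and the injectivity and monotonicity of $\abs{\cdot}$. We should also note that the $i$ chosen in (iii) depends on $n$ and $\mc{R}_J$, which the definition allows.
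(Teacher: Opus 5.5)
Your proof is correct and follows the paper's argument. You reduce axiom (iii) to the case where a unique $v\neq w_J$ in $w\mc{W}_J$ has $\abs{v}=n$, pick $j\in J$ with $\ell(vs_j)=\ell(v)-1$, and use \eqref{product Birkhoff classes} with \eqref{inverse image Birkhoff classes} to produce a $j$-adjacent chamber with $\delta = vs_j$. The only cosmetic difference is that you multiply by $\tilde s_j$ directly. The paper instead takes an arbitrary $y\in\varphi_{jR}(\SL_2(R)\setminus U_2^+(L)B_2^-(R))$ and invokes \Cref{lemma preimage B+2}, which amounts to the same step.
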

\begin{proof}
Let $\mc{R}_J$ be a $J$-residue in $\mc{C}^-(G^{\min}_R)$. It is clear that for any $n \in \N$,  $\mc{R}_J \cap C_{n} \subseteq \mc{R}_J \cap C_{n+1}$. Moreover, $\bigcup_{n \in \N} (\mc{R}_J \cap C_n) = \mc{R}_J$.

All that remains to show is that for every $n > 0$ such that $C_{n-1} \cap \mc{R}_J \neq \emptyset$, there exists a $j \in J$ such that every chamber $c \in C_n \cap \mc{R}_J$ is $j$-adjacent to some chamber $c' \in C_{n-1} \cap \mc{R}_J$. There are two possible cases: either $C_{n-1} \cap \mc{R}_J = C_n \cap \mc{R}_J$, or there exists an element $gc_0^- \in (C_n \setminus C_{n-1})\cap\mc{R}_J$. In the former case, we may choose $j \in J$ arbitrarily, since every chamber $c \in C_n \cap \mc{R}_J = C_{n-1} \cap \mc{R}_J$ is $j$-adjacent to itself. In the latter case, by definition there is a fixed $w \in \mc{W}$ such that $\delta(g c_0^-) = w$, for all $g c_0^- \in (C_n\setminus C_{n-1})\cap \mc{R}_J$. Moreover, $w \mc{W}_J$ has a unique element of minimal length $w_J$, and since $C_{n-1} \cap \mc{R}_J$ is non-empty, this element is different from $w$. Indeed, define $m = \min \{r \in \N \mid C_r \cap \mc{R}_J \neq \emptyset\}$. Then $m < n$, and with $l$ as in \Cref{lemma gate properties}, i.e.\ $l = \ell(w_J)$, 
\begin{align*}
C_m \cap\mc{R}_J & = \left\{g c_0^- \in \mc{R}_J \mid \abs{gc_0^- } = m \right\} = \left\{gc_0^- \in \mc{R}_J \mid \ell(\delta(gc_0^-)) = l\right\} \\
& =  \left\{g c_0^- \in \mc{R}_J \mid \delta(gc_0^-) = w_J \right\}.
\end{align*}
In particular, by \Cref{lemma gate properties}, \[w = w_J w', \text{ with } w' \in \mc{W}_J \setminus \{1_{\mc{W}}\}.\]
Let now $j \in J$ such that $\ell(w's_j) = \ell(w')-1$, and let moreover $gc_0^- \in (C_n \setminus C_{n-1})\cap \mc{R}_J$, i.e.\ $\delta(gc_0^- ) = w$. Choose $y \in \varphi_{jR}(\SL_2(R) - U_2^+(L) B^-_2(R))$. We show that $\delta(gyc_0^-) = w s_j$. Indeed, by \Cref{lemma preimage B+2}, we have that $p(y) \not\in \varphi_{jk}(B^-_2(k))$. In particular, $p(y) \in \varphi_{ik}(B^-_2(k) \begin{psmallmatrix} 0 & 1 \\ -1 & 0\end{psmallmatrix} B^-_2(k))$, and hence $p(y) \in B^-_k \tilde{s}_j B^-_k$. It follows that 
\[p(gy) \in B^+_k \tilde{w} B^-_k \tilde{s}_j B^-_k = B^+_k \tilde{w} \tilde{s}_j B^-_k, \]
where the latter equality follows by \eqref{product Birkhoff classes}, since \[\ell(ws_j) = \ell(w_J w' s_j) = \ell(w_J) + \ell(w's_j) = \ell(w_J) + \ell(w') -1 = \ell(w) -1.\]
Hence $gy \in B(ws_j)$ by \eqref{inverse image Birkhoff classes}, i.e.\ $\delta(gy c_0^-) = w s_j$. In particular, \[\ell(\delta(gy c_0^-)) < \ell(\delta(gc_0^-))\]
and by definition of $\abs{\cdot} \colon \mc{W} \to \N$, it follows that $gy c_0^- \in C_{n-1}$. By construction $gyc_0^- \in \mc{R}_J$. Hence $j \in J$ is an element such that every $gc_0^-\in C_n \cap \mc{R}_J$ is $j$-adjacent to some $hc_0^- \in C_{n-1} \cap \mc{R}_J$, finishing the proof.
\end{proof}

Next, we show the condition \eqref{lco}. We calculate what the set $\aff(\mc{R}_J)$ looks like for a residue $\mc{R}_J$ in $\mc{C}^-(G^{\min}_R)$. 

\begin{proposition}
\label{aff of residue}
Let $\mc{R}_J$ be a $J$-residue in $\mc{C}^-(G^{\min}_R)$, with $J \subseteq I$. Let $hc_0^-\in \mc{R}_J$ be such that $\delta(hc_0^-) = w$, and let $w_J$ be the unique element of minimal length in $w\mc{W}_J$. Then there is some $gc_0^-\in \mc{R}_J$ such that $\delta(gc_0^-) = w_J$, and
\begin{equation}
\aff(\mc{R}_J) = \left\{gx c_0^- \in \mc{C}^-(G^{\min}_R) \mid x \in \ov{U}^+_{JR}\right\}.
\end{equation} 
\end{proposition}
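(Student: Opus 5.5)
We first produce the chamber $gc_0^-$, then prove the two inclusions of the displayed equality separately, with \Cref{lemma gate properties} and \Cref{thm birkhoff} doing the bookkeeping.

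\emph{Existence of $gc_0^-$ and normalisation.} By \Cref{lemma gate properties}\textit{(\ref{lemma 1 item a})}, $\delta$ takes every value of $w\mc{W}_J$ on $\mc{R}_J$, so some $gc_0^- \in \mc{R}_J$ has $\delta(gc_0^-) = w_J$. By \textit{(\ref{lemma 1 item b})}, $\aff(\mc{R}_J)$ is exactly the set of chambers $hc_0^- \in \mc{R}_J$ with $\delta(hc_0^-) = w_J$. By \eqref{inverse image Birkhoff classes} we may write $g = u_+\tilde{w}_J b_- \kappa$ with $\kappa \in \Ker(p)$. Using \Cref{Kernel p} and the fact that $T_R$ normalises $\ov{U}^\pm_L$, we rewrite $b_-\kappa \in \ov{U}^+_L \ov{B}^-_R$. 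Since $\ov{B}^-_R$ stabilises $c_0^-$, we may therefore replace $g$ by $u_+\tilde{w}_J v$ with $v \in \ov{U}^+_L$, without changing the chamber $gc_0^-$.

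\emph{Inclusion $\supseteq$.} Let $x \in \ov{U}^+_{JR}$. Then $x \in \ov{P}^-_{JR}$, since $U^+_{JR} \le G_{JR}$ by \Cref{generated by simples 2 spherical} applied to $A_J$. Hence $gxc_0^- \in \mc{R}_J$. It remains to show $\delta(gxc_0^-) = w_J$, i.e.\ $p(gx) \in U^+_k\tilde{w}_J B^-_k$. We have $p(gx) = p(u_+)\,\tilde{w}_J\, p(v)p(x)$. The factor $p(x) \in U^+_{Jk}$ is conjugated into $U^+_k$ by $\tilde{w}_J$, by \Cref{conjugation of UJ+ by J reduced}. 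The factor $p(v)$ is a priori only in $U^+_k$, so we must first move $v$ past $x$. The key point is that $\ov{U}^+_L$ is normal in $\ov{U}^+_R$, being the kernel of $p$ restricted to $\ov{U}^+_R$. Thus $vx = xv'$ with $v' \in \ov{U}^+_L$, giving $p(gx) = p(u_+)\,\tilde{w}_J p(x)\tilde{w}_J^{-1}\cdot\tilde{w}_J$. This lies in $U^+_k\tilde{w}_J B^-_k$ (indeed $p(v')=1$).

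\emph{Inclusion $\subseteq$, the main obstacle.} Take $hc_0^- \in \aff(\mc{R}_J)$, so $h = gy$ with $y \in \ov{P}^-_{JR}$ and $\delta(gyc_0^-) = w_J$; we must show $yc_0^- = xc_0^-$ for some $x \in \ov{U}^+_{JR}$. Write $y = y_J b$ with $y_J \in \ov{G}_{JR}$ and $b \in \ov{B}^-_R$, using \Cref{levi decomp} repeatedly, and drop $b$. Next reduce modulo $L$. In $G_k$, $p(y_J) \in G_{Jk} = U^+_{Jk}\mc{W}_J B^-_{Jk}$ by the Birkhoff decomposition of the rank-$J$ group. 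If the Weyl component were $w'' \ne 1$, then \eqref{product Birkhoff classes} together with $\ell(w_Jw'') = \ell(w_J)+\ell(w'')$ would force $\delta(gy_Jc_0^-) = w_Jw''$, a contradiction. Hence $p(y_J) \in U^+_{Jk}B^-_{Jk} \subseteq B^-_kB^+_k$ (after inverting). Then \Cref{5.10}, applied to a preimage in $G_{JR}$ (or to its inverse), gives $y_J \in T_R\ov{U}^-_{JR}\ov{U}^+_{JR}$ or in the reverse order, and we aim for $y_J \in \ov{U}^+_{JR}\ov{B}^-_R$. Here the worry is the precise form of \Cref{5.10}: it yields $T U^- U^+$, whereas we need $U^+ B^-$. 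We handle this by applying the Cartan--Chevalley involution $\omega_R$, or by applying \Cref{5.10} to $y_J^{-1}$, whose image lies in $B^+_kB^-_k$ as well, since $U^+_{Jk}B^-_{Jk}$ is inverse-closed up to swapping. If that fails, we would instead lift directly: take $x \in \ov{U}^+_{JR}$ lifting the $U^+_{Jk}$-component, so that $p(x^{-1}y_J) \in B^-_{Jk}$, and then use \Cref{Kernel p} and \Cref{5.10} to get $x^{-1}y_J \in \ov{U}^+_L\ov{B}^-_R \cap \ov{P}^-_{JR}$, absorbing the $\ov{U}^+_L$-part into $\ov{U}^+_{JR}$ via \eqref{intersection B+ P-}. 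Either way, we conclude $yc_0^- = xc_0^-$ with $x \in \ov{U}^+_{JR}$, as required.
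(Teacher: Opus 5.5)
Your proof is correct, and its skeleton is the paper's. You write the chambers of $\mc{R}_J$ as $gyc_0^-$ with $y\in\ov{G}_{JR}$. You then use the Birkhoff decomposition of $G_{Jk}$ together with \Cref{conjugation of UJ+ by J reduced} to see that $\delta(gyc_0^-)=w_J$ exactly when $p(y)\in U^+_{Jk}B^-_{Jk}$. Finally you lift via \Cref{5.10} applied to the inverse, giving $y\in\ov{U}^+_{JR}\ov{U}^-_{JR}T_R\subseteq\ov{U}^+_{JR}\ov{B}^-_R$.

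The one substantive difference is the choice of representative of the chamber, and here your version is the better one. The paper asserts $gc_0^-=b_+\tilde{w}_Jc_0^-$ with $b_+\in\ov{B}^+_R$. But $p(g)B^-_k=p(b_+)\tilde{w}_JB^-_k$ only yields $g\in b_+\tilde{w}_J\ov{U}^+_L\ov{B}^-_R$, and $\tilde{w}_J$ need not conjugate $\ov{U}^+_L$ into $\ov{B}^+_R$. For instance, take $A=(2)$, $J=\emptyset$ and $0\neq\ell\in L$. The chamber $\tilde{s}_1x_{\alpha_1}(\ell)c_0^-$ has $\delta=s_1$. Yet equality with some $b_+\tilde{s}_1c_0^-$ would force $x_{\alpha_1}(\ell)\in\tilde{s}_1^{-1}\ov{B}^+_R\tilde{s}_1\ov{B}^-_R=\ov{B}^-_R$, contradicting \eqref{Bir}.

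Your normalisation $g=u_+\tilde{w}_Jv$ with $v\in\ov{U}^+_L$ is always available: $\Ker(p)$ is normal, so \Cref{Kernel p} gives $\ov{B}^-_R\Ker(p)=\ov{U}^+_L\ov{B}^-_R$. Since $p(v)=1$, every computation modulo $L$ goes through unchanged. So your argument in effect repairs that step of the paper's proof.

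A few local justifications need fixing; none is serious.
\begin{itemize}
\item In the inclusion $\subseteq$, \eqref{product Birkhoff classes} cannot force $\delta(gy_Jc_0^-)=w_Jw''$: when lengths add, it only gives containment in a union of two cells. The correct reason is the one you already use for $\supseteq$. By \Cref{conjugation of UJ+ by J reduced}, $p(gy_J)=p(u_+)\tilde{w}_Jp(y_J)\in U^+_k\tilde{w}_J\tilde{w}''B^-_k$.
\item $p(y_J^{-1})$ lies in $B^-_kU^+_{Jk}\subseteq B^-_kB^+_k$, not in $B^+_kB^-_k$. This is exactly the hypothesis of \Cref{5.10}, so neither $\omega_R$ nor your fallback is needed, though the fallback via \eqref{intersection B+ P-} is also valid.
\item Since $p(v)=1$, the detour through normality of $\ov{U}^+_L$ in $\ov{U}^+_R$ in the inclusion $\supseteq$ is superfluous.
\item The rewriting $b_-\kappa\in\ov{U}^+_L\ov{B}^-_R$ rests on normality of $\Ker(p)$, not on $T_R$ normalising $\ov{U}^{\pm}_L$.
\end{itemize}
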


\begin{proof}
Note that a chamber $g c_0^- \in \mc{R}_J$ with $\delta(g c_0^- ) = w_J$ exists by \Cref{lemma gate properties}\eqref{lemma 1 item a}. Then $p(g) \in B^+_k \tilde{w}_J B^-_k$, and in particular, there exists $b_+ \in \ov{B}^+_R$ such that $p(g)B^-_k = p(b_+) \tilde{w}_J B^-_k$. Hence $b_+ \tilde{w}_J$ is a representative of the coset $g\ov{B}^-_R$, i.e. $g c_0^-= b_{+} \tilde{w}_J c_0^-$. We then have 
\[\mc{R}_J = \left\{ b_+ \tilde{w}_J x c_0^- \in \mc{C}^-(G^{\min}_R) \mid x \in \ov{G}_{JR}\right\}.\]
Now, by \Cref{lemma gate properties}\eqref{lemma 1 item b}, \[b_+ \tilde{w}_J x  c_0^- \in \aff(\mc{R}_J) \iff p(b_+ \tilde{w}_J x) \in B^+_k \tilde{w}_J B^-_k. \]
Moreover, $G_{Jk}$ has a Birkhoff decomposition $G_{Jk} = \coprod_{w' \in \mc{W}_J} U^+_{Jk} \tilde{w}' B^-_{Jk}$. In particular, for any $x \in \ov{G}_{JR}$, there is a $w' \in \mc{W}_J$ such that $p(x) \in U^+_{Jk} \tilde{w}' B^-_k$, and hence
\[p(b_+ \tilde{w}_J x ) \in B^+_k \tilde{w}_J U^+_{Jk} \tilde{w}' B^-_{Jk}.\]
By \Cref{conjugation of UJ+ by J reduced}, since $w_J$ is right $J$-reduced, we have that $\tilde{w}_J U^+_{Jk} \subseteq B^+_k \tilde{w}_J$. 
In particular, 
\[ p(b_+ \tilde{w}_J x ) \in  B^+_k \tilde{w}_J U^+_{Jk} \tilde{w}' B^-_{k} \subseteq B_k^+ \tilde{w}_J \tilde{w}' B^-_k.\]
Hence $b_+\tilde{w}_J x c_0^- \in \aff(\mc{R}_J)$ if and only if $w' = 1$, i.e.\ $p(x) \in U^+_{Jk} B^-_{Jk}$. By \Cref{Kernel p}, it follows that $b_+ \tilde{w}_J x c_0^- \in \aff(\mc{R}_J)$ if and only if $x \in \ov{U}^+_{JR} \ov{U}^+_L \ov{B}^-_R \cap \ov{G}_{JR}$. Now,
\begin{align*}
\ov{U}^+_{JR} \ov{U}^+_L \ov{B}^-_R \cap \ov{G}_{JR}  = \ov{U}^+_{JR}\left( \ov{U}^+_L \ov{B}^-_R \cap \ov{G}_{JR}\right) & \subseteq \ov{U}^+_{JR} \left(\ov{B}^+_R \ov{B}^-_R \cap \ov{G}_{JR}\right) \\
& \subseteq \ov{U}^+_{JR} \left(\ov{U}^+_{JR} \ov{U}^-_{JR} T_R\right) \subseteq \ov{U}^+_{JR} \ov{B}^-_R,
\end{align*}
where the inclusion $\ov{B}^+_R \ov{B}^-_R \cap\ov{G}_{JR} \subseteq \ov{U}^+_{JR}\ov{U}^-_{JR} T_R$ follows from \Cref{5.10}. It follows in particular that $b_+ \tilde{w}_J x c_0^- \in \aff(\mc{R}_J)$ if and only if $x \in \ov{U}^+_{JR} \ov{B}^-_R$, showing the claim.
\end{proof}

\begin{proposition}
\label{satisfies lco}
Let $A$ be $2$-spherical, and $R$ a local ring satisfying \eqref{co}. Then the filtration $(C_n)_{n\geq 0}$ of $\mc{C}^-(G^{\min}_R)$ as defined in \eqref{definition filtration} satisfies \eqref{lco}.
\end{proposition}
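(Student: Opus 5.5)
We need to show that for $|J|\le 2$ and any $J$-residue $\mc{R}_J$, the chamber system $(\aff(\mc{R}_J),(\sim_j)_{j\in J})$ is connected. The plan is to use the explicit description from \Cref{aff of residue}:
\[\aff(\mc{R}_J) = \{ g x c_0^- \mid x \in \ov{U}^+_{JR}\},\]
where $g$ is chosen with $\delta(gc_0^-) = w_J$. Since left multiplication by $g$ is a chamber automorphism, it suffices to join $g c_0^-$ to each $g x c_0^-$, $x \in \ov{U}^+_{JR}$, by a $J$-gallery all of whose chambers lie in $\aff(\mc{R}_J)$.

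First, by \Cref{generated by simples 2 spherical} applied to the ($2$-spherical) matrix $A_J$ (this is where \eqref{co} is used), the group $U^+_{JR}$ is generated by the simple root groups $U_{\alpha_j}$, $j\in J$. The same then holds for its image $\ov{U}^+_{JR}$. Hence every $x \in \ov{U}^+_{JR}$ can be written as a product $x = x_1 x_2\cdots x_m$ with each $x_t \in U_{\alpha_{j_t}}$ and $j_t\in J$.

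Next, consider the sequence of chambers
\[g c_0^- ,\; g x_1 c_0^- ,\; g x_1 x_2 c_0^- ,\; \ldots,\; g x_1\cdots x_m c_0^-.\]
Consecutive chambers are $j_t$-adjacent, since $x_t \in U_{\alpha_{j_t}} \subseteq \ov{G}_{j_t R}$ and hence $g x_1\cdots x_{t-1} x_t \ov{B}^-_R = (g x_1\cdots x_{t-1}) x_t \ov{B}^-_R$ with $x_t\in\ov{G}_{j_tR}$, which matches the adjacency relation of \Cref{example chamber system structure}. Each partial product $x_1\cdots x_t$ lies in $\ov{U}^+_{JR}$, so every intermediate chamber lies in $\aff(\mc{R}_J)$ by \Cref{aff of residue}. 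Therefore this is a $J$-gallery inside $\aff(\mc{R}_J)$, and $\aff(\mc{R}_J)$ is connected.

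The only point that needs care is whether $\ov{U}^+_{JR}$, defined using all positive roots in $\Phi^+(J)$, is generated by simple root groups. For $|J|\le 2$ and $A_J$ spherical of type $A_1\times A_1$, $A_2$, $B_2$ or $G_2$, this is exactly \Cref{generated by simples 2 spherical} under \eqref{co}. One must also check that the subgroup $U^+_{JR}$ inside $G_R$ agrees with the corresponding group for $\mfG_{A_J}(R)$, or at least is generated by the images of the $U_{\alpha}$ with $\alpha\in\Phi^+(J)$. If that identification is troublesome, the fallback is to argue directly via the rank-$2$ commutator relations \eqref{commutation KM min}: each $x_\gamma(r)$ with $\gamma \in \Phi^+(J)$ non-simple is expressible through commutators of simple root elements, using the invertibility conditions in \eqref{co} to divide by the structure constants.
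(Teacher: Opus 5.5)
Your proposal is correct and is essentially the paper's proof: both take the description $\aff(\mc{R}_J)=\{gxc_0^-\mid x\in\ov{U}^+_{JR}\}$ from \Cref{aff of residue}, use \Cref{generated by simples 2 spherical} under \eqref{co} to write elements of $\ov{U}^+_{JR}$ as products of simple root elements, and note that the resulting $J$-gallery through the partial products stays inside $\aff(\mc{R}_J)$ (the paper joins $gxc_0^-$ to $gyc_0^-$ via $x^{-1}y$, which is the same argument). The identification you worry about is supplied by \Cref{rk generated by 2 simples} (Allcock's lemma), but your fallback's phrase ``divide by the structure constants'' is inaccurate, since \eqref{co} does not make $2$ or $3$ invertible in $R$ (e.g.\ residue field $\F_4$); such arguments instead use suitable units, e.g.\ $u\in R^{\times}$ with $u-1\in R^{\times}$, whose existence is what \eqref{co} guarantees for a local ring.
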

\begin{proof}
By \Cref{aff of residue}, for any $J$-residue $\mc{R}_J$, there is some $g \in G^{\min}_R$ such that any two elements in $\aff(\mc{R}_J)$ are of the form $gx c_0^-$ and $gy c_0^-$, with $x,y \in \ov{U}^+_{JR}$. Since $R$ is assumed to satisfy \eqref{co}, it follows by \Cref{generated by simples 2 spherical}  that $x^{-1}y$ can be expressed as a product of elements in the simple root subgroups $U_{\alpha_j R}$ for $j \in J$. Hence there exists a $J$-gallery from $gx c_0^-$ to $gy c_0^-$, which is moreover completely contained in $\aff(\mc{R}_J)$ by \Cref{aff of residue}.
\end{proof}
We now prove \Cref{main theorem}.
\begin{theorem}
\label{main thm in text}
Let $A = (A_{ij})_{i,j \in I}$ be a $2$-spherical GCM and $R$ a local Bézout domain satisfying \eqref{co}. Then $\ov{U}^+_R$ is the amalgamated product of its subgroups $\ov{U}^+_{JR}$ for $J\subseteq I$ and $\abs{J} \leq 2$, along the canonical inclusions $\ov{U}^+_{J_1 R} \into \ov{U}^+_{J_2 R}$ for all $J_1 \subseteq J_2$, if this is the case for all of its rank $3$ subgroups $\ov{U}_{I' R}^+$, i.e.\ for all $I' \subseteq I$ with $\abs{I'} = 3$. 
\end{theorem}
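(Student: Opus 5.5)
The plan is to apply \Cref{Tits presentation} to the action of $\ov{U}^+_R$ on $C_0$, where $(C_n)_{n\in\N}$ is the filtration of \eqref{definition filtration}, and to obtain the simple connectedness of $C_0$ from \Cref{314 devmu}.

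\textbf{Step 1: the ambient chamber system is simply connected.} By \Cref{coincides w building of K} and \Cref{buildings are sc}, $\mc{C} := \mc{C}^-(G^{\min}_R)$ is simply connected, since $R$ is a local Bézout domain satisfying \eqref{co}.

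\textbf{Step 2: hypotheses of \Cref{314 devmu}.} By \Cref{prop filtration}, $(C_n)$ is a residual filtration. Since $\abs{\cdot}$ is injective with $\abs{1_{\mc{W}}}=0$, the set $C_0$ consists exactly of the chambers with $\delta = 1$. Hence $C_0 = \{u c_0^- \mid u \in \ov{U}^+_R\}$, by \eqref{inverse image Birkhoff classes} together with \Cref{Kernel p}, or directly by \Cref{aff of residue} applied with $J=I$. In particular $C_0\neq\emptyset$, and $\ov{U}^+_R$ acts transitively on it. Condition \eqref{lco} is \Cref{satisfies lco}.

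For \eqref{lsco}, take a $J$-residue $\mc{R}_J$ with $\abs{J}\le 3$. \Cref{aff of residue} gives $\aff(\mc{R}_J) = g\ov{U}^+_{JR} c_0^-$, and left translation by $g^{-1}$ is a chamber-system isomorphism onto $\ov{U}^+_{JR}c_0^-$. The group $\ov{U}^+_{JR}$ acts transitively on this set, and the stabiliser of the $J'$-residue through $c_0^-$ is $\ov{U}^+_{JR}\cap \ov{P}^-_{J'R} = \ov{U}^+_{J'R}$ by \eqref{intersection B+ P-}. To see this, intersect \eqref{intersection B+ P-} with $\ov{U}^+_{JR}$, noting $J'\subseteq J$. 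Here the $J'$-residue inside $\aff(\mc{R}_J)$ is $\ov{U}^+_{J'R}c_0^-$, which is again \Cref{aff of residue}. We then apply \Cref{Tits presentation}:
\begin{itemize}
\item for $\abs{J}=3$, the hypothesis that $\ov{U}^+_{JR}$ is the amalgam of its subgroups $\ov{U}^+_{J'R}$ with $\abs{J'}\le2$ gives simple connectedness of $\aff(\mc{R}_J)$;
\item for $\abs{J}\le 2$ the amalgam statement is tautological, since $J$ itself is among the allowed index sets, so $\aff(\mc{R}_J)$ is simply connected as well.
\end{itemize}

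\textbf{Step 3: conclusion.} \Cref{314 devmu} now gives that $C_0$ is simply connected. A second application of \Cref{Tits presentation} to $\ov{U}^+_R$ acting on $C_0$ shows that $\ov{U}^+_R$ is the amalgam of the stabilisers of the rank $\le 2$ residues through $c_0^-$. These stabilisers are $\ov{U}^+_{JR}$ by \eqref{intersection B+ P-}, and the inclusions are the canonical ones.

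\textbf{Main obstacle.} The delicate point is the identification of residues and stabilisers inside $C_0$ and inside $\aff(\mc{R}_J)$. The adjacency on these subsets is the one induced from $\mc{C}$, so one must check that the $J'$-residue of $c_0^-$ within the subsystem is exactly $\ov{U}^+_{J'R}c_0^-$. Being contained in $\mc{R}_{J'}(c_0^-)\cap C_0$, it equals $\aff(\mc{R}_{J'}(c_0^-))$, which is connected by \eqref{lco}. One must also check that its setwise stabiliser in $\ov{U}^+_{JR}$ coincides with $\ov{U}^+_{JR}\cap\ov{P}^-_{J'R}$. Both facts rest on \Cref{aff of residue} and \eqref{intersection B+ P-}, and this is where I would be most careful.
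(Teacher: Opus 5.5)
Your proposal is correct and follows the paper's proof. It obtains simple connectedness of $\mc{C}^-(G^{\min}_R)$ from \Cref{coincides w building of K} and \Cref{buildings are sc}, and uses the filtration \eqref{definition filtration} with \Cref{is residual}, \Cref{satisfies lco} and \Cref{aff of residue}, together with \Cref{Tits presentation}, to reduce \eqref{lsco} to the rank $3$ hypothesis; it then concludes via \Cref{314 devmu} and a second application of \Cref{Tits presentation}, with stabilisers identified by \eqref{intersection B+ P-}. Your explicit handling of the $\abs{J}\le 2$ case of \eqref{lsco}, and of why the residues and stabilisers inside $C_0$ and $\aff(\mc{R}_J)$ are the expected ones (via \eqref{lco}), fills in details that the paper leaves implicit.
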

\begin{proof}
Under the hypotheses on $A$ and $R$, the chamber system $\mc{C}^-(G^{\min}_R)$ is a building by \Cref{coincides w building of K}. In particular, $\mc{C}^-(G^{\min}_R)$ is simply connected by \Cref{buildings are sc}. The filtration $(C_n)_{n \in \N}$ of $\mc{C}^-(G^{\min}_R)$ as defined in \eqref{definition filtration}, is a residual filtration satisfying \eqref{lco} by \Cref{is residual} and \Cref{satisfies lco}. Moreover, by definition, $C_0 = \ov{U}^+_R c_0^-$. 
Hence if $(C_n)_{n \in \N}$ were to satisfy \eqref{lsco}, then by \Cref{314 devmu}, $\ov{U}^+_R c_0^-$ would be a simply connected chamber system, on which $\ov{U}^+_R$ acts transitively.

In particular, if $(C_n)_{n \in \N}$ satisfies \eqref{lsco}, we find that $\ov{U}^+_R$ has the desired amalgamated product decomposition by \Cref{Tits presentation}, since the stabilisers in $\ov{U}^+_R$ of the residues of rank at most $2$ containing $c_0^-$ are the subgroups $\ov{U}^+_R \cap \ov{P}^-_{JR} = \ov{U}^+_{JR}$ (see \eqref{intersection B+ P-}) with $\abs{J}\leq 2$, by \Cref{example chamber system structure}. 

By definition, $(C_n)_{n\in \N}$ satisfies \eqref{lsco} if and only if for any $J$-residue $\mc{R}_J \subseteq \mc{C}^-(G^{\min}_R)$ with $\abs{J} = 3$, the set $\aff(\mc{R}_J)$ is simply connected. By \Cref{aff of residue}, this is equivalent to the set $g \ov{U}^+_{JR} c_0^-$
being simply connected, for all $g \in G^{\min}_R$ and all $J \subseteq I$ of size $3$. But since the action of $G^{\min}_R$ on $\mc{C}^-(G^{\min}_R)$ preserves $i$-adjacency for any $i \in I$, we have that for any $g \in G^{\min}_R$ and any $J\subseteq I$, 
\[ g \ov{U}^+_{JR} c_0^- \cong \ov{U}^+_{JR} c_0^-, \]
as chamber systems. In combination with \Cref{Tits presentation}, it follows that $(C_{n})_{n \in \N}$ satisfies \eqref{lsco} if and only if for any $J \subseteq I$ with $\abs{J} = 3$, the subgroup $\ov{U}^+_{J R} \leqslant \ov{U}^+_R$ is the amalgamated product of its subgroups $\ov{U}^+_{J R} \cap \ov{P}^-_{J'R} = \ov{U}_{J' R}^+$ (see \eqref{intersection B+ P-}) with $J' \subseteq J$ and $\abs{J'}\leq 2$, along the inclusions $\ov{U}_{J_1 R}^+ \subseteq \ov{U}_{J_2 R}^+$ for all $J_1 \subseteq J_2 \subseteq J$.
\end{proof}

\section{\texorpdfstring{Simple connectedness in type $A_3$}{Simple connectedness in type A3}}
\label{sect A3}
\subsection{\texorpdfstring{The reduction step and proof in the $A_3$ case}{The reduction step and proof in the A3 case}}
In \cite{Tits86}, Tits showed that for a Cartan matrix $A$, the problem of obtaining a presentation of $U^+_R$ as an amalgamated product of its rank $2$ subgroups, reduces to its rank $3$ subgroups having such a presentation. He then moreover showed that for a Cartan matrix $A$ of type $A_3$, and $\K$ a field different from $\F_2$, the group $U^+_{\K}$ is the amalgam of its rank $2$ subgroups. These result were then combined to obtain an amalgamated product decomposition of $U^+_{\K}$ in terms of its rank $2$ subgroups for any Cartan matrix of simply laced type.

We adapt Tits' strategy to the more general setting of commutative rings satisfying a minor condition which extends the requirement that $\K \neq \F_2$, proving \Cref{spherical main}.

First, we formulate the statement of Tits' reduction step to the rank $3$ case for spherical type GCM's, for the convenience of the reader.
\begin{proposition}[{\cite[Proposition 7]{Tits86}}]
\label{reduction step Tits}
Let $A = (A_{ij})_{i,j \in I}$ be a Cartan matrix, with simple roots $\{\alpha_i \mid i \in I \}$. Let $R$ be a (commutative, unital) ring. Suppose the following two conditions are satisfied for $U^+_R \leqslant \mfG_A(R)$:
\begin{enumerate}[(i)]
\item For any distinct $i,j \in I$, the group $U^+_{\{i,j\} R}$ is generated by $U_{\alpha_i R}$ and $U_{\alpha_j R}$; \label{2 generated by simples}
\item For any distinct $i,j,k \in I$, the group $U^+_{\{i,j,k\} R}$ is the amalgamated product of its subgroups $U^+_{JR}$ with $J \subseteq \{i,j,l\}$ and $\abs{J} \leq 2$, along the canonical inclusions of the subgroups $U_{\alpha_i R}$, $U_{\alpha_j R}$ and $U_{\alpha_l R}$. 
\end{enumerate}
Then $U^+_R$ itself is the amalgamated product of its subgroups $U^+_{J R}$ for all $J\subseteq I$ with $\abs{J} \leq 2$, along the canonical inclusions of the root subgroups $U_{\alpha_i R} \into U_{JR}^+$ for all $i \in J \subseteq I$.
\end{proposition}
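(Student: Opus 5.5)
This is Tits' reduction step, and the paper cites it from \cite{Tits86}. I would nevertheless prove it directly by a group-theoretic argument, without chamber systems, since $R$ is an arbitrary ring.

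\textbf{Setup.} Let $\hat U$ be the abstract amalgam: the colimit of the groups $U^+_{JR}$ with $\abs{J}\le 2$, glued along the root subgroups $U_{\alpha_i R}$. Let $\pi\colon \hat U\to U^+_R$ be the canonical map. Condition \eqref{2 generated by simples} together with the fact that $U^+_R$ is generated by its simple root subgroups makes $\pi$ surjective. The whole task is therefore to show that $\pi$ is injective.

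\textbf{Lifting root subgroups.} Fix a reduced word for the longest element $w_0$ and enumerate $\Phi^+$ as $\beta_1,\ldots,\beta_N$ accordingly. Each positive root $\beta$ is $w\alpha_i$ for some $w$ and $i$. I would define a lift $\hat U_\beta\le\hat U$ by transporting $U_{\alpha_i R}$ along a chain of rank $2$ relations.

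Concretely, when $\alpha$ and $\alpha_j$ lie in a rank $2$ subsystem, $U_\beta$ sits inside $U^+_{\{i,j\}R}$ and so has a canonical copy in $\hat U$. Iterating this from lower height, each positive root $\beta$ lies in some rank $2$ subsystem spanned by roots of smaller height. In that subsystem the commutator relations of type $A_2$ or $B_2$ express $U_\beta$ through commutators $[U_\gamma,U_\delta]$ with $\gamma+\delta=\beta$, and one defines $\hat U_\beta$ as the subgroup generated by the corresponding commutators in $\hat U$.

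\textbf{Main obstacle: well-definedness.} The key step is to show that $\hat U_\beta$ does not depend on the choices made. Two choices differ by a move inside some rank $3$ subsystem $\Phi^+(\{i,j,k\})$, up to conjugation by the Weyl group. Here hypothesis \eqref{2 generated by simples} is used: in the rank $3$ situation the subgroup $U^+_{\{i,j,k\}R}$ maps isomorphically onto its image in $\hat U$, by universality. Hence the two candidate lifts agree inside that copy. One must also keep track of the fact that different choices land in different rank $3$ pieces; I would organise the bookkeeping by induction on height.

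\textbf{Conclusion.} Having the $\hat U_\beta$, I would show that they satisfy the Chevalley commutator relations \eqref{commutation KM min} in $\hat U$. Each such relation involves only two roots, which lie in a rank $\le 3$ subsystem, so it holds already in a rank $3$ copy that embeds in $\hat U$. It follows that every element of $\hat U$ can be written as $\prod_t \hat x_{\beta_t}(r_t)$ in the fixed order, using the standard collection process from the relations. Since the product map $\prod_t U_{\beta_t}\to U^+_R$ is bijective, $\pi$ is injective. I expect the well-definedness step to be the hardest part, because one needs a consistent choice across all rank $3$ subsystems; I would imitate Tits' argument, which works inductively along the reduced decomposition of $w_0$.
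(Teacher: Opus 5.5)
The paper does not prove this statement; it quotes it from \cite{Tits86}. Your argument therefore has to stand on its own, and it has a genuine gap exactly where the reduction has content.

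\textbf{The main gap.} Hypothesis (ii), not (i) as you wrote, tells you that the \emph{standard} rank $3$ subgroups $U^+_{\{i,j,k\}R}$ embed in $\hat U$: their amalgam maps to $\hat U$, and the composite with $\pi$ is the inclusion. That is all it gives you. $\hat U$ contains no elements $\tilde w$, so the Weyl group does not act on it. Hence ``a move inside some rank $3$ subsystem, up to conjugation by the Weyl group'' has no meaning in $\hat U$: a non-standard subsystem $w\Phi(\{i,j,k\})\cap\Phi^+$ comes with no copy in $\hat U$. Your concluding step is therefore false as stated, because a commutator relation between two positive root groups usually lies in no $U^+_{\{i,j,k\}R}$. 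For example, in type $A_4$ the relation $[x_{\alpha_1+\alpha_2}(r),x_{\alpha_3+\alpha_4}(s)]=x_{\alpha_1+\alpha_2+\alpha_3+\alpha_4}(\pm rs)$ involves root groups whose supports together cover all of $I$. Likewise, showing that $\hat U_{\alpha_1+\alpha_2+\alpha_3+\alpha_4}$ does not depend on the splitting $\alpha_1+(\alpha_2+\alpha_3+\alpha_4)$, $(\alpha_1+\alpha_2+\alpha_3)+\alpha_4$ or $(\alpha_1+\alpha_2)+(\alpha_3+\alpha_4)$ is not a computation inside any single standard rank $3$ piece. Passing from the standard rank $3$ pieces to all pairs of positive roots is the whole content of Tits' proposition, and your text leaves it to ``imitate Tits' argument''.

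\textbf{The lifts.} Defining $\hat U_\beta$ as the subgroup generated by the commutators $[\hat U_\gamma,\hat U_\delta]$ with $\gamma+\delta=\beta$ fails when structure constants are not units. In type $C_4$, the long root $2\alpha_1+2\alpha_2+2\alpha_3+\alpha_4$ lies in no standard rank $3$ subsystem. It is a sum of two positive roots only as a sum of two short roots, with structure constant $\pm 2$. Over a field of characteristic $2$, your recipe therefore yields the trivial group. Even in simply laced type, additivity of $r\mapsto[\hat x_\gamma(r),\hat x_\delta(1)]$ and its independence of the decomposition are real statements. Compare the proof of \Cref{case A3}, where exactly these facts for $v_{123}$ require work.

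\textbf{What is missing.} You need a mechanism that turns local rank-$3$ information into global information. In the paper's Kac--Moody analogue (\Cref{main thm in text}) this is supplied by a residual filtration satisfying \eqref{lco} and \eqref{lsco}, the Devillers--M\"uhlherr theorem (\Cref{314 devmu}) and the Tits lemma (\Cref{Tits presentation}). A group-theoretic proof needs an explicit induction, for instance on $\ell(w)$ for right $J$-reduced $w$. That induction must produce compatible copies inside $\hat U$ of the conjugated subgroups $\tilde w U^+_{JR}\tilde w^{-1}$ with $\abs{J}\le 3$. Your proposal does not provide this.
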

\begin{remark}
\label{rk generated by 2 simples}
The condition \textit{(\ref{2 generated by simples})} holds as soon as $R$ satisfies \eqref{co}, by \cite[Lemma 11.1]{Allcock}. In particular, for a simply laced Cartan matrix, the condition  \textit{(\ref{2 generated by simples})} is satisfied for any commutative unital ring $R$.
\end{remark}

For the rest of this section, let $A$ be of type $A_3$. Note that in this case, for any $R$,
\[G_R = \mathrm{E}_4(R) \leqslant \SL_4(R), \quad \text{ and } \quad U^+_{R} = \left\{\begin{psmallmatrix} 1 & \ast & \ast & \ast \\ 0 & 1 & \ast & \ast \\ 0 & 0 & 1 & \ast \\ 0 & 0 & 0 & 1 \end{psmallmatrix} \in \mathrm{E}_4(R) \right\}.\]
Here $\mathrm{E}_4(R)$ denotes the subgroup of $\SL_4(R)$ generated by the elementary matrices $I_4 + \lambda E_{i,j}$, for all $\lambda \in R$, where $I_4$ denotes the identity matrix and $E_{i,j}$ denotes the matrix which has every entry equal to $0$, except the entry $(i,j)$, which equals $1$. For any $i \in I = \{1,2,3\}$, the root subgroup $U_{\alpha_i}$ of $U^+_R$ is generated by the elements $I_4 + \lambda E_{i,i+1}$, for all $\lambda \in R$.

\begin{theorem}
\label{case A3}
Let $R$ be a (commutative, unital) ring such that $1_R$ is the sum of two invertible elements. Let $A = (A_{ij})_{i,j \in I}$ be the Cartan matrix of type $A_3$. Then $U^+:= U^+_R \leqslant G_R$ is the amalgamated product of the simple root subgroups $U_{\alpha_i}$ and the rank two subgroups $U_{JR}^+ = \langle U_{\alpha_i}, U_{\alpha_j} \rangle$, along the canonical inclusions $U_{\alpha_i } \into U_{J R}^+$, for all $J = \{i,j\} \subseteq I$.
\end{theorem}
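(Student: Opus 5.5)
We follow Tits' argument for the field case, checking at each step that only the hypothesis on $1_R$ is used. Write $x_{ij}(\lambda) := I_4 + \lambda E_{i,j}$ for $1 \leq i < j \leq 4$. Let $\hat U$ be the amalgamated product of $U_{12}:=U^+_{\{1,2\}R}$, $U_{23}:=U^+_{\{2,3\}R}$ and $U_{13}:=U^+_{\{1,3\}R} = U_{\alpha_1}\times U_{\alpha_3}$ along the simple root subgroups. There is a canonical surjection $\pi\colon \hat U \to U^+$, since $U^+$ is generated by the $U_{\alpha_i}$ (\Cref{rk generated by 2 simples}). The task is to show that $\pi$ is injective.

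\textbf{Step 1: a Steinberg-type presentation of $\hat U$.} Inside $\hat U$, set $\hat x_{13}(\lambda):=[\hat x_{12}(\lambda),\hat x_{23}(1)]$, computed in $U_{12}$, and similarly $\hat x_{24}(\lambda)$ in $U_{23}$. Both are well-defined elements of $\hat U$, and the Steinberg relations internal to $U_{12}$ and $U_{23}$ hold there. The group $U^+_R$ of unipotent upper triangular matrices is presented by the generators $x_{ij}(\lambda)$ subject to additivity and the Chevalley commutator relations: $[x_{ij},x_{jk}]=x_{ik}$, and all other pairs commute (in particular $x_{14}$ is central). It therefore suffices to define $\hat x_{14}(\lambda)$ in $\hat U$ and verify the remaining relations there, namely:
\begin{enumerate}[(a)]
\item $\hat x_{13}$ commutes with $\hat x_{23}$ and with $\hat x_{12}$;
\item the two candidate definitions $[\hat x_{13}(\lambda),\hat x_{34}(\mu)]$ and $[\hat x_{12}(\lambda),\hat x_{24}(\mu)]$ of $x_{14}(\lambda\mu)$ agree, and $\hat x_{14}$ is additive and central;
\item $\hat x_{13}$ commutes with $\hat x_{24}$.
\end{enumerate}
Once these hold, $\pi$ has an inverse defined on the generators of the standard presentation, and hence $\pi$ is an isomorphism.

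\textbf{Step 2: commutations with $x_{34}$ and the use of units.} The relations not already contained in $U_{12}$ or $U_{23}$ involve $\hat x_{13}$ or $\hat x_{24}$ together with a generator from the opposite end, for example $[\hat x_{13}(\lambda), \hat x_{34}(\mu)]$. Here the only extra information is that $\hat x_{12}$ commutes with $\hat x_{34}$ (via $U_{13}$). Tits' trick is to exploit the torus: conjugation by $h=\mathrm{diag}(a,b,c,d)$ acts on every $U_{\alpha_i}$ and, for $R$ commutative, extends to automorphisms of $U_{12},U_{23},U_{13}$ compatible with the amalgam. It therefore induces an automorphism $\hat h$ of $\hat U$, with $\hat h(\hat x_{ij}(\lambda))=\hat x_{ij}(a_ia_j^{-1}\lambda)$.

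Write $1=u+v$ with $u,v\in R^\times$. Then $\hat x_{13}(\lambda)=\hat x_{13}(u\lambda)\hat x_{13}(v\lambda)$, and each factor can be written as the commutator of a torus-rescaled $\hat x_{12}$ with $\hat x_{23}$. The goal is an identity of the shape $[\hat x_{13}(\lambda),\hat x_{34}(\mu)] = [\hat x_{12}(\lambda),\hat x_{24}(\mu)]$. This follows by expanding $[[\hat x_{12},\hat x_{23}],\hat x_{34}]$ with the Hall--Witt identity, using $[\hat x_{12},\hat x_{34}]=1$ in $U_{13}$ and the fact that $\hat x_{12}$ commutes with $\hat x_{13}$ in $U_{12}$. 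The remaining term, an element $z$, must be shown central. For this, compare $z$ with its images under the $\hat h$'s: the torus element $\mathrm{diag}(u^{-1},1,1,1)$ scales $\lambda$ but fixes the index-$2,3,4$ generators. Combining the identities for $u\lambda$ and $v\lambda$ yields additivity, and additivity in turn kills the obstruction term.

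\textbf{Main obstacle.} The hard step is (b)/(c): showing that $\hat x_{14}$ is well defined and central, and that $\hat x_{13}$ and $\hat x_{24}$ commute, in $\hat U$. In Tits' argument this is exactly where a field different from $\F_2$ is needed, through the choice of a scalar $a\neq 0,1$. We would first transcribe that computation with $a:=u$, so that $1-a=v$ is also a unit. We then check that only the invertibility of $a$ and of $1-a$ is ever used, never inverses of arbitrary nonzero elements. The additive parametrisation $\lambda\mapsto \hat x_{14}(\lambda)$ over all of $R$ then follows because every $\lambda$ can be written as $u\lambda+v\lambda$ with both summands in the orbit of $\lambda$ under torus rescaling.
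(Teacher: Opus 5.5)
There is a genuine gap in Step 2, which is where the whole content of the theorem lies. Your frame is right and matches the paper's: present $U^+_R$ by the Steinberg relations, check that the missing relations already hold in the amalgam, and use $t=u$, $1-t=v$ with $t,1-t\in R^\times$ where Tits uses $t\neq 0,1$. But you never prove centrality of the mixed commutators, and the mechanism you propose cannot prove it.

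Your Hall--Witt expansion gives, for all $\lambda,\sigma,\mu\in R$,
\[ [\hat x_{13}(-\lambda\sigma),\hat x_{34}(\mu)]^{\hat x_{23}(\sigma)}=[\hat x_{12}(\lambda),\hat x_{24}(-\sigma\mu)]. \]
The ``remaining term'' is therefore exactly the conjugation by $\hat x_{23}(\sigma)$, i.e.\ the centrality you are trying to establish. Moreover, this identity holds in any group satisfying the rank-$2$ relations. That includes Tits' counterexample over $\F_2$ (\Cref{remark Tits}), where the missing relations fail, so it cannot yield them by itself.

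The torus automorphisms $\hat h$ add nothing. Every defining relation family of $\hat U$ is stable under rescaling parameters by units, so applying $\hat h$ to an identity you have derived only gives the same identity with rescaled parameters.

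Finally, ``combining the identities for $u\lambda$ and $v\lambda$ yields additivity'' is circular. Indeed, $[\hat x_{13}(u\lambda)\hat x_{13}(v\lambda),\hat x_{34}(\mu)]=[\hat x_{13}(u\lambda),\hat x_{34}(\mu)]^{\hat x_{13}(v\lambda)}[\hat x_{13}(v\lambda),\hat x_{34}(\mu)]$. So additivity in $\lambda$ already presupposes that this commutator commutes with $\hat x_{13}$.

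The missing idea is a genuinely new relation, obtained by conjugating $[\hat x_{12}(r),\hat x_{34}(s)]=1$ by the \emph{unipotent} element $\hat x_{23}(t)$:
\[ [\hat x_{12}(r)\hat x_{13}(rt),\,\hat x_{34}(s)\hat x_{24}(-st)]=1. \]
This is combined with Tits' Lemme 4 (\Cref{Tits lemma central elt X}). Put $U_1=\{\hat x_{12}(\cdot)\}$, $U_3=\{\hat x_{34}(\cdot)\}$ and $V_i=U_i^{\hat x_{23}(1)}$. Then $U_1$ commutes with $U_3$, $V_1$ commutes with $V_3$ (conjugate relation), and $U_i$ commutes with $V_i$ (because $\hat x_{13}$ commutes with $\hat x_{12}$ and $\hat x_{24}$ commutes with $\hat x_{34}$). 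The lemma says that $[u_1v_1,u_3v_3]=1$ forces $[u_1,v_3]=[u_3,v_1]$, and that this element is central in $X=\langle U_1,U_3,V_1,V_3\rangle$.

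Now split $\hat x_{12}(r)\hat x_{13}(rt)=\hat x_{12}(r(1-t))\cdot\hat x_{12}(rt)\hat x_{13}(rt)\in U_1V_1$, and likewise $\hat x_{34}(s)\hat x_{24}(-st)=\hat x_{34}(s(1-t))\cdot\hat x_{34}(st)\hat x_{24}(-st)\in U_3V_3$. This yields
\[ [\hat x_{12}(r(1-t)),\hat x_{24}(-st)]=[\hat x_{13}(rt),\hat x_{34}(-s(1-t))]\ \text{ central in } X. \]
This is where the hypothesis genuinely enters. With $t,1-t\in R^\times$, the parameters $r(1-t),st,rt,s(1-t)$ range over all of $R$. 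Hence all commutators $[\hat x_{12}(\cdot),\hat x_{24}(\cdot)]$ and $[\hat x_{13}(\cdot),\hat x_{34}(\cdot)]$ are central in $X$ and satisfy a rescaling identity.

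From there the paper derives, in order: the equality of your two candidate definitions of $\hat x_{14}$, then $[\hat x_{13},\hat x_{24}]=1$, then commutation with $\hat x_{23}$. Only after that does your Hall--Witt step go through, because the conjugations can now be dropped. It gives $[\hat x_{13}(\lambda),\hat x_{34}(\mu)]=\hat x_{14}(\lambda\mu)$, and additivity of $\hat x_{14}$ follows.
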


When $R$ is a field, this statement is precisely \cite[Proposition 8]{Tits86}, which we formulate here for completeness' sake.

\begin{proposition}[{\cite[Proposition 8]{Tits86}}]
\label{Tits case A3}
Let $\K$ be a field different from $\F_2$ and let $A = (A_{ij})_{i,j \in I}$ be the Cartan matrix of type $A_3$. Then $U^+_{\K} \leqslant G_{\K} =\SL_4(\K)$ is the amalgamated product of the simple root subgroups $U_{\alpha_i}$ and the rank two subgroups $U^+_{J\K} = \langle U_{\alpha_i}, U_{\alpha_j}\rangle$, along the canonical inclusions $U_{\alpha_i} \into U_{J \K}^+$, for all $J =\{i,j \} \subseteq I$.
\end{proposition}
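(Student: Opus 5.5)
The plan is to prove the statement group-theoretically: build the abstract amalgam and show that the canonical map onto $U^+_{\K}$ is an isomorphism by producing a normal form in the amalgam.

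Let $\wt{U}$ be the amalgamated product of $U_{\alpha_1},U_{\alpha_2},U_{\alpha_3}$ and $U^+_{\{1,2\}\K}$, $U^+_{\{2,3\}\K}$, $U^+_{\{1,3\}\K}$ along the canonical inclusions. Since the rank~$2$ subgroups generate $U^+_{\K}$ and satisfy all the identifications, there is a canonical surjection $\pi\colon \wt{U}\to U^+_{\K}$. Concretely, $\wt{U}$ is generated by symbols $x_i(t)$ for $i\in\{1,2,3\}$ and $t\in\K$, subject to the following relations:
\begin{itemize}
\item[(a)] $x_i$ is additive;
\item[(b)] $x_1$ and $x_3$ commute;
\item[(c)] the Heisenberg relations of $U^+_{\{1,2\}\K}$ and $U^+_{\{2,3\}\K}$ hold.
\end{itemize}
Inside $\wt U$ the relations (c) define root elements $x_{12}(st)=[x_1(s),x_2(t)]$ and $x_{23}(st)=[x_2(s),x_3(t)]$. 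These are additive and central in the respective rank~$2$ subgroups, and they do not depend on the factorisation of the product.

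A key tool is the torus. The diagonal torus $T_{\K}\cong(\K^\times)^3$ acts on each rank~$2$ subgroup, and these actions are compatible with the inclusions of the $U_{\alpha_i}$, so $T_{\K}$ acts on $\wt U$ by automorphisms and $\pi$ is equivariant. An element $h\in T_\K$ acts on $x_\beta(t)$ by $x_\beta(\beta(h)t)$.

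The main steps are as follows.
\begin{enumerate}
\item Show that $x_{12}(s)$ commutes with $x_1(t)$ and $x_2(t)$. This already holds in $U^+_{\{1,2\}\K}$. The new relations to establish are $[x_1(s),x_{23}(t)]=[x_{12}(s),x_3(t)^{-1}]^{\pm1}$ (with the sign fixed by the matrix conventions), and we call this common value $x_{123}(st)$.
\item Show that $x_{123}$ is well defined, depends only on the product $st$, is additive, and is central in $\wt U$.
\item Show that $x_{12}$ commutes with $x_{23}$, and that $x_{12}$ commutes with $x_3$ modulo $x_{123}$. The latter is the definition in step~1.
\end{enumerate}
Granting these, every element of $\wt U$ can be written as $x_1(a)x_2(b)x_3(c)x_{12}(d)x_{23}(e)x_{123}(f)$ by the usual collection process. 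The image of such a word under $\pi$ is the unipotent matrix with entries $a,b,c,d,e,f$ (up to signs), so the word is trivial only if $a=\dots=f=0$. Hence $\pi$ is injective.

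The main obstacle is steps~1 and~2, since these relations do not lie in any single rank~$2$ subgroup. This is where $\K\neq\F_2$ enters. I would first derive the Hall--Witt identity applied to $x_1(s)$, $x_2(1)$, $x_3(t)$, using $[x_1,x_3]=1$. Together with the rank~$2$ relations, this should give that $y:=[x_{12}(s),x_3(t)]$ and $[x_1(s),x_{23}(t)]$ agree up to elements commuting appropriately. To upgrade this to equality and to bilinearity, I would use the torus. Choose $\lambda\in\K^\times$ with $\lambda\neq1$, which is possible exactly because $\K\neq\F_2$, and choose $h$ scaling $\alpha_2$ by $\lambda$ and fixing $\alpha_1$ and $\alpha_3$. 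Then $h$ fixes neither $x_{12}$ nor $x_{23}$ but acts on both with the same character. Comparing $y$ with its $h$-conjugate in $\wt U$, and expanding $x_{12}(s+s')$ by additivity, should show that the expression is additive in each variable and depends only on $st$. Centrality of $x_{123}$ is then obtained the same way: its commutator with a generator is fixed by a torus element acting on it by a nontrivial character, which forces the commutator to be trivial. If this bookkeeping becomes too delicate, the fallback is Tits' geometric route. There one shows via \Cref{Tits presentation} that the opposite chamber orbit $U^+_\K c_0^-$ in the $A_3$ building is simply connected, by contracting loops onto rank~$2$ residues, again using a field with at least three elements to find chambers opposite two given ones.
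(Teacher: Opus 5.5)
Your reduction is sound: $\wt U$ surjects onto $U^+_\K$, and once you know in $\wt U$ that $x_{12}$ and $x_{23}$ commute, and that $[x_{12}(r),x_3(s)]=[x_1(r),x_{23}(s)]$ depends only on $rs$, is additive and is central, collection gives injectivity. (The paper instead uses the defining presentation of $U^+_\K$ to get an inverse map.) The gap is in steps 1--2, where the torus cannot do what you ask of it.

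$T$ acts on $\wt U$ by automorphisms, and $\wt U$ sits in $\wt U\rtimes T$ with exactly its own relations. So applying $h$ only transports relations you already have to rescaled ones. For your $h$ with character values $(1,\lambda,1)$ you get $h(y)=[x_{12}(\lambda s),x_3(t)]$, which is a priori a different element. ``Comparing $y$ with $h(y)$'' gives nothing unless some relation between $[x_{12}(\lambda s),x_3(t)]$ and $[x_{12}(s),x_3(t)]$ is already known, and that is precisely the bilinearity to be proved.

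Likewise, ``fixed by $h$ while $h$ acts by a nontrivial character, hence trivial'' is valid only for elements already known to lie in a root subgroup $x_\beta(\K)$ with $\beta(h)\neq 1$. Nothing places $[x_{123}(a),x_i(b)]$ in such a subgroup. Moreover, you can only verify that $h$ fixes this commutator when $h$ is trivial on both entries, and then the fixedness carries no information.

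Nor does Hall--Witt start the argument. Applied to $x_1(s),x_2(b),x_3(t)$ it yields $[x_{12}(-sb),x_3(t)]^{x_2(b)}=[x_1(s),x_{23}(-bt)]^{x_3(t)}$, which is usable only after the centrality you are trying to prove. (A minor point: an $h$ with values $(1,\lambda,1)$ lies in the torus of $\SL_4(\K)$ only if $\pm\lambda$ is a square, so you would need diagonal matrices of $\GL_4(\K)$.) The geometric fallback is named but not argued.

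The missing idea is a source of genuinely new relations. In the paper (following Tits) this is an \emph{inner} automorphism: conjugating $[x_1(r),x_3(s)]=1$ by $x_2(t)$ gives $[x_1(r)x_{12}(rt),\,x_3(s)x_{23}(-st)]=1$. Set $V_i:=x_2(-1)U_ix_2(1)$. Then $[U_1,U_3]=[U_3,V_3]=[V_1,V_3]=[U_1,V_1]=1$ (\Cref{commutation UV}). By \Cref{Tits lemma central elt X}, $[u_1v_1,u_3v_3]=1$ forces $[u_1,v_3]=[u_3,v_1]$, and this element is central in $X=\langle U_1,U_3,V_1,V_3\rangle$.

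Now split $x_1(r)x_{12}(rt)=x_1(r-rt)\cdot x_1(rt)x_{12}(rt)\in U_1V_1$, and similarly in the second entry, with $t\notin\{0,1\}$. This choice of $t$, not the torus, is where $\K\neq\F_2$ is used. It shows that all $[x_1(r),x_{23}(s)]$ and $[x_{12}(r),x_3(s)]$ are central in $X$, and gives the identity \eqref{almost iv}. The rescaling you wanted from $h$ is then just the change of variable $t=\lambda(1+\lambda)^{-1}$ inside \eqref{almost iv}.

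From there the remaining relations follow in order:
\begin{itemize}
\item \eqref{12 commutes with 23} follows by expanding with \Cref{commutation UV};
\item centrality of $v_{123}$ follows by conjugating with $x_2(s)$;
\item only then does Hall--Witt give $[x_{12}(r),x_3(s)]=v_{123}(rs)$.
\end{itemize}
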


Before we proceed with the proof of \Cref{case A3}, we first introduce some notation. By definition, $U^+ := U^+_R$ is the group with generators \[x_{\alpha}(r), \text{ for all } \alpha \in \Delta := \{\alpha_1, \alpha_2, \alpha_3, \alpha_{1} + \alpha_2, \alpha_2 + \alpha_3, \alpha_1 + \alpha_2 + \alpha_3\} \text{ and all } r \in R,\] and relations, for all $r,s \in R$:
\newcounter{savedoldequation}
\setcounter{savedoldequation}{\value{equation}}
\counterwithout{equation}{section}
\renewcommand{\theequation}{\roman{equation}}
\setcounter{equation}{0}
\begin{align}
& x_{\alpha}(r) x_{\alpha}(s)  = x_{\alpha}(r+s), \text{ for all } \alpha \in \Delta, \label{R+}\\
[& x_{\alpha_1}(r),x_{\alpha_2}(s)] = x_{\alpha_1 + \alpha_2}(rs), \quad [x_{\alpha_1 + \alpha_2}(r),x_{\alpha_1}(s)] = [x_{\alpha_1 + \alpha_2}(r),x_{\alpha_2}(s) ] = 1, \label{1+2} \\
[& x_{\alpha_2}(r),x_{\alpha_3}(s)]  = x_{\alpha_2 + \alpha_3}(rs), \quad [x_{\alpha_2 + \alpha_3}(r),x_{\alpha_2}(s)] = [x_{\alpha_2 + \alpha_3}(r), x_{\alpha_3}(s)] = 1, \label{2+3}\\
[ &  x_{\alpha_1}(r),x_{\alpha_3}(s)] = 1 \label{1 commuteert met 3}\\
[&  x_{\alpha_1+\alpha_2}(r), x_{\alpha_2+\alpha_3}(s)]  = 1 \label{12 commutes with 23} \\
[& x_{\alpha_1+ \alpha_2}(r), x_{\alpha_3}(s)] = [x_{\alpha_1}(r),x_{\alpha_2+\alpha_3}(s)] \label{(1+2)+3=1 +(2+3)}\\
[& x_{\alpha_1 + \alpha_2}(r), x_{\alpha_3}(s)] = x_{\alpha_1 + \alpha_2 + \alpha_3}(rs),\label{lands in 1+2+3}\\
[& x_{\alpha_1 + \alpha_2 + \alpha_3}(r), x_{\alpha}(s)] = 1, \text{ for all } \alpha \in \Delta.\label{123 central}
\end{align}
\setcounter{equation}{\value{savedoldequation}}
\renewcommand{\theequation}{\arabic{equation}}
\counterwithin{equation}{section}
Moreover, the amalgamated product $\wt{U}$ of the root subgoups $U_{\alpha_i}$ and the rank two subgroups $U_{i,j} := U_{\{i,j \}R}^+$ of $U^+$ along the inclusions of $U_{\alpha_i }$ into $U_{i,j}$, is given by the group with the generators $x_{\alpha}(r)$, for all $r \in R$ and all $\alpha \in \tilde{\Delta} := \Delta \setminus \{\alpha_1+\alpha_2+\alpha_3\}$, satisfying the relations \eqref{R+} for all $\alpha \in \tilde{\Delta}$, as well as the relations \eqref{1+2}, \eqref{2+3} and  \eqref{1 commuteert met 3}.

Define in $\wt{U}$, for any $t \in R$,
\begin{equation}
\label{v123}
v_{123}(t) := [x_{\alpha_1+\alpha_2}(1),x_{\alpha_3}(t)].
\end{equation}

Proving \Cref{case A3} consists of showing that $\wt{U}$ also satisfies \cref{12 commutes with 23,(1+2)+3=1 +(2+3),lands in 1+2+3,123 central}, with the elements $x_{\alpha_1+\alpha_2+\alpha_3}(t)$ replaced by $v_{123}(t)$, and that $v_{123}(t)$ also satisfies \eqref{R+}.

Let $U_i := U_{\alpha_i}$ for all $i \in I$. Moreover, let $V_i$ be the conjugation of $U_i$ by $x_{\alpha_2}(1)$, i.e.\  $V_i := x_{\alpha_2}(-1) U_i x_{\alpha_2}(1)$. In particular, by \eqref{1+2} and \eqref{2+3},
\begin{align}
V_1 &= \langle x_{\alpha_1}(r) x_{\alpha_1 + \alpha_2}(r) \mid r \in R \rangle \label{description V1}\\
V_3 & = \langle x_{\alpha_3}(r) x_{\alpha_2 + \alpha_3}(-r) \mid r \in R \rangle.\label{description V3}
\end{align}

\begin{remark}
\label{remark Tits}
As Tits remarked in \cite[§16.5]{Tits86}, \Cref{Tits case A3} does not hold over $\F_2$. Indeed, if one supposes $\K = \F_2$, then we define, for $i \in \{1,3\}$, the groups $D_i := U_{i} \ast V_i$, which are isomorphic to the infinite dihedral group $D_{\infty}$. Moreover, we can define the semidirect product
\[ H :=  (D_1 \times D_3)\rtimes U_{2}, \]
by letting the elements $x_{\alpha_2}(1)$ act on $D_i$ by conjugation (respecting the relations \eqref{1+2} and \eqref{2+3}). Then the relations \eqref{R+}, \eqref{1+2}, \eqref{2+3} and \eqref{1 commuteert met 3} are satisfied in $H$, and in particular $H \cong \wt{U}$. It follows that $\wt{U} \not\cong U$, since $U$ is the finite group \[U \cong \left\{ \begin{psmallmatrix} 1 & \ast & \ast & \ast \\ 0 & 1 & \ast & \ast \\ 0 & 0 & 1 & \ast \\ 0 & 0 & 0 & 1 \end{psmallmatrix} \in \SL_4(\F_2)\right\}.\]
\end{remark}

The following lemma is an intermediary step towards the proof of \Cref{case A3}.
\begin{lemma}[{\cite[§16 (7)]{Tits86}}]
\label{commutation UV}
For $U_i$ and $V_i$ as above, the following commutation relations hold in~$\wt{U}$:
\begin{equation*}
[U_1, U_3] = [U_3, V_3] = [V_1,V_3] = [U_1,V_1] = \{1\}.
\end{equation*}
\end{lemma}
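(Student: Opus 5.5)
Each of the four equalities can be checked directly from the defining relations of $\wt{U}$, namely \eqref{R+}, \eqref{1+2}, \eqref{2+3} and \eqref{1 commuteert met 3}, together with the descriptions \eqref{description V1} and \eqref{description V3}. I will check them on generators: an element $x_{\alpha_i}(r)$ of $U_i$ and an element $x_{\alpha_2}(-1)x_{\alpha_i}(s)x_{\alpha_2}(1)$ of $V_i$. Since all groups involved are generated by these elements, commutation of generators gives commutation of the subgroups.

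\textbf{Step 1: $[U_1,U_3]=\{1\}$.} This is exactly relation \eqref{1 commuteert met 3}.

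\textbf{Step 2: $[U_1,V_1]=\{1\}$ and $[U_3,V_3]=\{1\}$.} These relations live entirely inside the rank two subgroups $U_{1,2}$ and $U_{2,3}$, which embed in $\wt{U}$. By \eqref{description V1}, $V_1$ is generated by the elements $x_{\alpha_1}(r)x_{\alpha_1+\alpha_2}(r)$. By \eqref{1+2}, $x_{\alpha_1+\alpha_2}$ commutes with $x_{\alpha_1}$, and $U_1$ is abelian by \eqref{R+}. Hence $x_{\alpha_1}(s)$ commutes with $x_{\alpha_1}(r)x_{\alpha_1+\alpha_2}(r)$. The case $[U_3,V_3]$ is symmetric: use \eqref{description V3} and the relation in \eqref{2+3} stating that $x_{\alpha_2+\alpha_3}$ commutes with $x_{\alpha_3}$.

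\textbf{Step 3: $[V_1,V_3]=\{1\}$.} Since $V_i = x_{\alpha_2}(-1)U_i x_{\alpha_2}(1)$, we have
\[
[V_1,V_3] = x_{\alpha_2}(-1)[U_1,U_3]x_{\alpha_2}(1).
\]
This is trivial by Step 1, because conjugation is an automorphism of $\wt{U}$.

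I do not expect a real obstacle. The only point needing care is that the descriptions \eqref{description V1} and \eqref{description V3} were derived using only the relations \eqref{1+2} and \eqref{2+3}, so they hold in $\wt{U}$ and not only in $U^+$; this should be stated explicitly. The deeper content, that $V_1$ and $U_3$ need not commute and that $\F_2$ fails (\Cref{remark Tits}), is not needed for this lemma.
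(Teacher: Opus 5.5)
Your proof is correct and is essentially the paper's argument: $[U_1,U_3]=\{1\}$ is relation \eqref{1 commuteert met 3}, $[V_1,V_3]=\{1\}$ follows by conjugating it by $x_{\alpha_2}(1)$, and $[U_1,V_1]=[U_3,V_3]=\{1\}$ follow from \eqref{description V1} and \eqref{description V3} together with \eqref{1+2} and \eqref{2+3}. Your remark that $U_{1,2}$ and $U_{2,3}$ embed in $\wt{U}$ is true but not needed, since you only use defining relations of $\wt{U}$.
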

\begin{proof}
The groups $U_1$ and $U_3$ commute by \eqref{1 commuteert met 3}, and conjugating relation \eqref{1 commuteert met 3} by $x_{\alpha_2}(1)$ shows that $V_1$ and $V_3$ commute. Now $U_1$ commutes with $V_1$ by \eqref{description V1} and \eqref{1+2}, and $U_3$ commutes with $V_3$ by \eqref{description V3} and \eqref{2+3}.
\end{proof}
Moreover, the following holds.
\begin{lemma}[{\cite[Lemme 4]{Tits86}}]
\label{Tits lemma central elt X}
Let $u_i\in U_i$ and $v_i \in V_i$ be such that $[u_1 v_1, u_3 v_3]~=~1$. Then \[ [u_1, v_3] = [u_3, v_1], \]
and $[u_1, v_3]$ is central in the subgroup $X$ of $\wt{U}$ generated by $U_1$, $U_3$, $V_1$ and $V_3$.
\end{lemma}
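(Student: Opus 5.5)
The approach is a purely group-theoretic manipulation inside $\wt{U}$, using only the commutation relations of \Cref{commutation UV}: $U_1$ commutes with $U_3$ and with $V_1$, and $V_3$ commutes with $U_3$ and with $V_1$. The point is to expand the hypothesis $[u_1v_1,u_3v_3]=1$ with these rules, isolate the two ``cross'' commutators $[u_1,v_3]$ and $[u_3,v_1]$, and then use the freedom to permute the roles of the letters to get centrality.

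\textbf{Step 1: the equality $[u_1,v_3]=[u_3,v_1]$.} The hypothesis says that $u_1v_1$ and $u_3v_3$ commute:
\[u_1v_1u_3v_3=u_3v_3u_1v_1.\]
I will move letters past each other using only the allowed commutations ($u_1\leftrightarrow u_3$, $u_1\leftrightarrow v_1$, $v_1\leftrightarrow v_3$, $u_3\leftrightarrow v_3$).

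On the left-hand side, $v_1$ commutes with $u_1$, so $u_1v_1u_3v_3=v_1u_1u_3v_3=v_1u_3u_1v_3$. On the right-hand side, $v_3$ commutes with $v_1$ and $u_3$ commutes with $u_1$, so
\[u_3v_3u_1v_1=u_3v_3u_1v_1\]
is rewritten as $v_3u_3u_1v_1=v_3u_1u_3v_1$. Equating the two forms gives
\[v_1u_3u_1v_3=v_3u_1u_3v_1.\]

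Rearranging this identity, and inserting the commuting pairs appropriately, should yield $u_3^{-1}v_1^{-1}u_3v_1=u_1^{-1}v_3^{-1}u_1v_3$, which is exactly $[u_3,v_1]=[u_1,v_3]$. Call this common element $z$.

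\textbf{Step 2: centrality of $z$ in $X$.} Since $X=\langle U_1,U_3,V_1,V_3\rangle$, it suffices to show that $z$ commutes with each of $U_1,U_3,V_1,V_3$. The trick is to re-apply the hypothesis to modified quadruples. For instance, $[u_1v_1,u_3v_3]=1$ is equivalent, after conjugating by $u_1'\in U_1$ (which commutes with $u_1,v_1,u_3$), to a condition involving $u_3\,(v_3)^{u_1'}$. Combined with the symmetric use of both expressions for $z$, this should give the following.

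The form $z=[u_1,v_3]$ commutes with $U_3$ and $V_1$. Indeed, $u_3$ commutes with $v_3$ and with $u_1$, so conjugation by $u_3$ fixes $u_1$ and $v_3$; hence $u_3$ fixes $z$. Likewise $v_1$ commutes with $u_1$ and $v_3$, so $v_1$ fixes $z$.

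The form $z=[u_3,v_1]$ commutes with $U_1$ and $V_3$ for the same reason: $U_1$ commutes with $U_3$ and $V_1$, and $V_3$ commutes with $U_3$ and $V_1$.

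The crucial point is that one must check that arbitrary elements $u_3'\in U_3$ and $v_1'\in V_1$ (not only the given $u_3,v_1$) commute with $z$. In fact the argument above works verbatim for arbitrary $u_3'\in U_3$ and $v_1'\in V_1$, because the needed commutations are between whole subgroups. Therefore $z$ is centralised by $U_1,U_3,V_1,V_3$, hence by $X$.

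The main obstacle is therefore only the bookkeeping in Step 1: turning the word identity $v_1u_3u_1v_3=v_3u_1u_3v_1$ into the equality of commutators with the correct conventions $[x,y]=x^{-1}y^{-1}xy$. I expect this to come out by multiplying by $u_1^{-1}u_3^{-1}$ and their counterparts and using the commuting pairs. Step 2 is then immediate once both expressions for $z$ are available.
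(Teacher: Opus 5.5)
Your argument is correct and is essentially the paper's: you rewrite $u_1v_1u_3v_3=u_3v_3u_1v_1$ using only the commutations of \Cref{commutation UV} to get $[u_1,v_3]=[u_3,v_1]$. You then get centrality because $U_3$ and $V_1$ centralise $u_1$ and $v_3$, while $U_1$ and $V_3$ centralise $u_3$ and $v_1$. The step you left as ``should yield'' does go through: left-multiply $v_1u_3u_1v_3=v_3u_1u_3v_1$ by $v_1^{-1}$ and then by $u_3^{-1}$, moving each past $v_3u_1$ on the right, to get $u_1v_3=v_3u_1\,u_3^{-1}v_1^{-1}u_3v_1$. Your detour about conjugating by $u_1'\in U_1$ is not needed.
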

\begin{proof}
Suppose $u_1 v_1 u_3 v_3 = u_3 v_3 u_1 v_1$. Then $v_1 u_3 v_3 = u_1^{-1} u_3 v_3 u_1 v_1$. But $v_1 u_3 v_3 = v_3 v_1 u_3$ since $v_3$ commutes with $u_3$ and $v_3$ by \Cref{commutation UV}. Hence \[v_1 u_3 = v_3^{-1} u_1^{-1} u_3 v_3 u_1 v_1.\] 
But the latter is equal to $u_3 v_3^{-1} u_1^{-1} v_3 u_1 v_1$, since $u_3$ commutes with $u_1$ and $v_3$. Hence \[u_3^{-1} v_1 u_3 = v_3^{-1} u_1^{-1} v_3 u_1 v_1 = v_1 v_3^{-1} u_1^{-1} v_3 v_1,\]
where the last equality follows from the fact that $v_1$ commutes with $u_1$ and $v_3$. Hence \[v_1^{-1} u_3^{-1} v_1 u_3 = v_3^{-1} u_1^{-1} v_3 u_1.\]
This shows the first statement. It now trivially follows that $[u_1,v_3]$ is central in $X$, since, by \Cref{commutation UV}, $[u_1,v_3]$ commutes with $U_3$ and $V_1$, and $[u_3,v_1]$ commutes with $U_1$ and $V_3$.
\end{proof}

\begin{proof}[Proof of \Cref{case A3}]
Consider the group $\wt{U}$ as defined above. We show the relations \cref{12 commutes with 23,(1+2)+3=1 +(2+3),lands in 1+2+3,123 central} hold, with the elements $x_{\alpha_1+\alpha_2+\alpha_3}(t)$ replaced by $v_{123}(t)$, and that $v_{123}(t)$ also satisfies \eqref{R+}. We begin by showing relation \eqref{(1+2)+3=1 +(2+3)}. Let $r,s \in R$. Then by \eqref{1+2}, \[[x_{\alpha_1}(r),x_{\alpha_3}(s)] = 1.\] Conjugating this expression by $x_{\alpha_2}(t)$ for $t \in R$ implies that
\[[x_{\alpha_1}(r)x_{\alpha_1+\alpha_2}(rt),x_{\alpha_3}(s)x_{\alpha_2+\alpha_3}(-st)] = 1.\]
In particular, by \eqref{R+},
\[ [x_{\alpha_1}(r-rt) x_{\alpha_1}(rt) x_{\alpha_1+\alpha_2}(rt), x_{\alpha_3}(s -st) x_{\alpha_3}(st)x_{\alpha_2+\alpha_3}(-st)] = 1. \]
But by \eqref{description V1} and \eqref{description V3}, 
\begin{align*}
x_{\alpha_1}(r-rt) \in U_1,  \quad & x_{\alpha_1}(rt) x_{\alpha_1+\alpha_2}(rt) \in V_1,\text{ and }\\ 
x_{\alpha_3}(s-st) \in U_3, \quad & x_{\alpha_3}(st) x_{\alpha_2+\alpha_3}(-st) \in V_3.
\end{align*}
Hence we may apply \Cref{Tits lemma central elt X} to obtain that for all $r,s,t \in R$
\begin{equation}
\label{star}
\left[x_{\alpha_1}(r-rt), x_{\alpha_3}(st) x_{\alpha_2+\alpha_3}(-st)\right] = \left[x_{\alpha_3}(s-st),x_{\alpha_1}(rt)x_{\alpha_1+\alpha_2}(rt)\right],
\end{equation}
and moreover these elements are central in the subgroup $X$ of $\wt{U}$ generated by $U_1$, $U_3$, $V_1$ and $V_3$. In any group $G$, the following relations hold for $x,y,z \in G$:
\begin{align}
[x,zy] & = [x,y][x,z]^{y} \label{product second entry}\\
[xz,y] & = [x,y]^z [z,y], \label{product first entry}
\end{align}
It follows that 
\begin{align*}
\left[x_{\alpha_1}(r-rt), x_{\alpha_3}(st) x_{\alpha_2+\alpha_3}(-st)\right] & = \left[x_{\alpha_1}(r-rt), x_{\alpha_{2}+\alpha_3}(-st)\right]\left[x_{\alpha_1}(r-rt),x_{\alpha_3}(st)\right]^{x_{\alpha_2+\alpha_3}(-st)}\\
& = \left[x_{\alpha_1}(r-rt), x_{\alpha_{2}+\alpha_3}(-st)\right],
\end{align*}
where the last equality follows from \eqref{1 commuteert met 3}. Similarly, 
\[
\left[x_{\alpha_3}(s-st),x_{\alpha_1}(rt)x_{\alpha_1+\alpha_2}(rt)\right] = \left[ x_{\alpha_3}(s-st), x_{\alpha_1+\alpha_2}(rt)\right]. 
\]
However, in any group $G$ there are moreover the relations
\begin{equation}
\label{switch entries of commutators modulo inverses}
[x^{-1},y] = [y,x]^{x^{-1}} \text{ and } [x,y^{-1}] = [y,x]^{y^{-1}}, \qquad \text{ for all } x,y,z \in G.
\end{equation}
Applying \eqref{switch entries of commutators modulo inverses} to the previous expression, we get
\begin{align*}
\left[ x_{\alpha_3}(s-st),x_{\alpha_1+\alpha_2}(rt)\right] & = \left[x_{\alpha_1 + \alpha_2}(rt),x_{\alpha_3}(-s+st)\right]^{x_{\alpha_1+\alpha_2}(s-st)} \\
& = \left[x_{\alpha_1 + \alpha_2}(rt),x_{\alpha_3}(-s+st)\right], 
\end{align*}
where the last equality holds because $\left[x_{\alpha_1 + \alpha_2}(rt),x_{\alpha_3}(-s+st)\right]$ is central in $X$, and because $x_{\alpha_1+\alpha_2}(s-~st) \in X$. We conclude from \eqref{star} that 
\begin{equation}
\label{almost iv}
\forall r,s,t \in R, \quad \left[ x_{\alpha_1}(r-rt), x_{\alpha_2+\alpha_3}(-st) \right] = \left[x_{\alpha_1+\alpha_2}(rt),x_{\alpha_3}(-s+st)\right] .
\end{equation}
In particular,
\begin{align}
\label{plenty central elements in X}
\text{ } & \text{If} \text{ there exists a } t  \in R \text{ such that } t, 1-t \in R^{\times}, \text{ then for any } r,s \in R, \text{ the elements} \\ & [x_{\alpha_1}(r), x_{\alpha_2+\alpha_3}(s)]  \text{ and } [x_{\alpha_1 + \alpha_2}(r), x_{\alpha_3}(s)] \text{ are central in } X. \notag 
\end{align}
We now apply a change of variables to the above. Let $\lambda \in R^{\times}$ such that $1+ \lambda$ is invertible. Take $t := \lambda(1+\lambda)^{-1}$. Then $t$ and  $1-t = (\lambda+1)^{-1}$ are invertible. In particular, by \eqref{almost iv},
\begin{align}
\label{if there is a+1 invertible}
& \text{For any } \lambda \in R \text{ such that }\lambda, \lambda+1\in R^{\times}, \text{ and for any } r,s \in R, \\ & [x_{\alpha_1}(r),x_{\alpha_2+\alpha_3}(s)] = [x_{\alpha_1+\alpha_2}(r \lambda) , x_{\alpha_3}(s\lambda^{-1})].\notag
\end{align}
By assumption, there exists an element $b \in R^{\times}$ such that $1+b \in R^{\times}$. Then $1 + (-b-1) \in R^{\times}$, and $1 - b^{-1}(b+1) = b^{-1} \in R^{\times}$. In particular by \eqref{if there is a+1 invertible}, one has that, for any $r,s \in R$,
\begin{align*}
[x_{\alpha_1+\alpha_2}(r), x_{\alpha_3}(s)] & = [x_{\alpha_1}(r b^{-1}), x_{\alpha_2+\alpha_3}(s b)] \\
& =  [x_{\alpha_1+\alpha_2}(-r b^{-1}(b+1)), x_{\alpha_3}(-s b(b+1)^{-1})] \\
& = [x_{\alpha_1}(r), x_{\alpha_2+\alpha_3}(s)],
\end{align*}
and relation \eqref{(1+2)+3=1 +(2+3)} follows.

Next, we show \eqref{12 commutes with 23}. By definition, for any $r, s\in R$, 
\begin{align*}
\left[x_{\alpha_1+\alpha_2}(r),x_{\alpha_2+\alpha_3}(s) \right] & = \left[ [x_{\alpha_1}(r),x_{\alpha_2}(1)], [x_{\alpha_3}(-s),x_{\alpha_2}(1)]   \right] \\
& = \left[x_{\alpha_1}(-r)  x_{\alpha_2}(-1)x_{\alpha_1}(r)x_{\alpha_2}(1) ,x_{\alpha_3}(s)  x_{\alpha_2}(-1) x_{\alpha_3}(-s) x_{\alpha_3}(1)\right].
\end{align*}
Denote 
\begin{align*}
u_1  & := x_{\alpha_1}(-r) \in U_1, \quad v_1 := x_{\alpha_2}(-1) x_{\alpha_1}(r) x_{\alpha_2}(1) \in V_1, \\
u_3  & := x_{\alpha_3}(s) \in U_3, \quad v_3  := x_{\alpha_2}(-1) x_{\alpha_3}(-s) x_{\alpha_2}(1) \in V_3.
\end{align*}
Then we obtain by \eqref{product first entry} that 
\begin{align*}
\left[x_{\alpha_1+\alpha_2}(r),x_{\alpha_2+\alpha_3}(s) \right] & = [u_1, u_3 v_3]^{v_1} [v_1, u_3 v_3] = [u_1,v_3]^{v_1} [u_1,u_3]^{v_3 v_1} [v_1,u_3v_3] \\
& = [u_1,v_3] [v_1,u_3v_3],
\end{align*}
where the last equality holds because $u_1$ commutes with $u_3$, and $v_1$ commutes with $u_1$ and $v_3$ by \Cref{commutation UV}. Applying \eqref{product second entry} we then obtain that 
\begin{align*}
\left[x_{\alpha_1+\alpha_2}(r),x_{\alpha_2+\alpha_3}(s) \right]  & = [u_1,v_3] [v_1,v_3][v_1,u_3]^{v_3} \\ 
& = [u_1,v_3][v_1,u_3],
\end{align*}
where the latter equality follows from \Cref{commutation UV}. In particular, by \eqref{1+2} and \eqref{2+3},
\begin{align*}
\left[x_{\alpha_1+\alpha_2}(r),x_{\alpha_2+\alpha_3}(s) \right] & = \left[x_{\alpha_1}(-r), x_{\alpha_2}(-1) x_{\alpha_3}(-s) x_{\alpha_2}(1)\right]\left[ x_{\alpha_2}(-1) x_{\alpha_1}(r) x_{\alpha_2}(1) , x_{\alpha_3}(s)\right] \\
& = \left[x_{\alpha_1}(-r), x_{\alpha_3}(-s) x_{\alpha_2+\alpha_3}(s)\right] \left[ x_{\alpha_1}(r) x_{\alpha_1+\alpha_2}(r), x_{\alpha_3}(s)\right].
\end{align*}
Applying once more \eqref{product second entry} and \eqref{product first entry}, as well as \Cref{commutation UV}, yields
\[ \left[x_{\alpha_1+\alpha_2}(r),x_{\alpha_2+\alpha_3}(s) \right] = \left[x_{\alpha_1}(-r),x_{\alpha_2+\alpha_3}(s)\right] \left[x_{\alpha_1+ \alpha_2}(r), x_{\alpha_3}(s)\right]. \]
By \eqref{(1+2)+3=1 +(2+3)}, the last expression is the same as 
\[\left[x_{\alpha_1+\alpha_2}(r),x_{\alpha_2+\alpha_3}(s) \right] = \left[x_{\alpha_1}(-r),x_{\alpha_2+\alpha_3}(s)\right] \left[x_{\alpha_1}(r), x_{\alpha_2+\alpha_3}(s)\right].\]
Because by assumption there is an element $t \in R$ with $t , 1-t \in R^{\times}$, it follows from \eqref{plenty central elements in X} that $[x_{\alpha_1}(r),x_{\alpha_2+\alpha_3}(s)]$ is central in $X$, and hence in particular commutes with $x_{\alpha_1}(-r)$. Hence by \eqref{switch entries of commutators modulo inverses}, 
\[\left[x_{\alpha_1+\alpha_2}(r),x_{\alpha_2+\alpha_3}(s) \right] = \left[x_{\alpha_1}(-r),x_{\alpha_2+\alpha_3}(s)\right] \left[x_{\alpha_2+\alpha_3}(s), x_{\alpha_1}(-r)\right] = 1,\]
where the last equality is satisfied because $[x,y]^{-1} = [y,x]$ for any $x,y$ in a group $G$. The relation \eqref{12 commutes with 23} follows.

We now show \eqref{123 central}. Let $v_{123}(r)$ be as in \eqref{v123}. It suffices to show that $v_{123}(r)$ commutes with $x_{\alpha_2}(s)$ for all $r,s \in R$, since $v_{123}(r) = [x_{\alpha_1+\alpha_2}(1),x_{\alpha_3}(r)]$ is central in $X$ by \eqref{plenty central elements in X}, and $X$ contains all the root subgroups associated to the roots $\{\alpha_1, \alpha_3, \alpha_1+\alpha_2,\alpha_2+\alpha_3, \alpha_1+\alpha_2+\alpha_3\}$. But
\begin{align*}
v_{123}(r)^{x_{\alpha_2}(s)} & = \left[x_{\alpha_1+\alpha_2}(1)^{x_{\alpha_2}(s)}, x_{\alpha_3}(r)^{x_{\alpha_2}(s)}\right] \\
& = [x_{\alpha_1+\alpha_2}(1), x_{\alpha_3}(r)x_{\alpha_2+\alpha_3}(-rs)] \\
& = [x_{\alpha_1+\alpha_2}(1), x_{\alpha_2+\alpha_3}(-rs)] [x_{\alpha_1+\alpha_2}(1), x_{\alpha_3}(r)]^{x_{\alpha_2+\alpha_3}(-rs)} \\
& = [x_{\alpha_1+\alpha_2}(1), x_{\alpha_3}(r)] = v_{123}(r).
\end{align*}
Here the second equality holds by \eqref{1+2} and \eqref{2+3}, the third bys \eqref{product second entry} and the fourth by \eqref{12 commutes with 23} and \eqref{plenty central elements in X}.

We now show \eqref{lands in 1+2+3}, with $x_{\alpha_1+\alpha_2+\alpha_3}(rs)$ replaced by $v_{123}(rs) = [x_{\alpha_1+\alpha_2}(1), x_{\alpha_3}(rs)]$. Let $r,s \in R$. Then 
\begin{align*}
[x_{\alpha_1+\alpha_2}(r), x_{\alpha_3}(s)]^{x_{\alpha_2}(-r)} & = \left[ [x_{\alpha_1}(1), x_{\alpha_2}(r)],x_{\alpha_3}(s)\right]^{x_{\alpha_2}(-r)} \\
& = \left[ x_{\alpha_2}(-r), [x_{\alpha_3}(s), x_{\alpha_1}(-1)]\right]^{x_{\alpha_1}(1)} \cdot \left[ x_{\alpha_1}(1), [x_{\alpha_2}(-r), x_{\alpha_3}(-s)]\right]^{x_{\alpha_3}(s)} \\
& = \left[ x_{\alpha_1}(1), [x_{\alpha_2}(-r), x_{\alpha_3}(-s)]\right]^{x_{\alpha_3}(s)} \\
& = \left[ x_{\alpha_1}(1), [x_{\alpha_2}(-r), x_{\alpha_3}(-s)]\right] \\
& = \left[ x_{\alpha_1}(1), [x_{\alpha_2}(1), x_{\alpha_3}(rs)]\right] \\
& = [x_{\alpha_1}(1), x_{\alpha_2+\alpha_3}(rs)] = [x_{\alpha_1+\alpha_2}(1), x_{\alpha_3}(rs)] = v_{123}(rs).
\end{align*}
Here the first equality follows by \eqref{1+2}, the second by the Hall--Witt identity, the third by \eqref{1 commuteert met 3}, the fourth by \eqref{1 commuteert met 3} and \eqref{2+3}, the fifth by \eqref{2+3}, and the last line by \eqref{(1+2)+3=1 +(2+3)}. But by \eqref{123 central}, \[ [x_{\alpha_1+\alpha_2}(r), x_{\alpha_3}(s)] = v_{123}(rs)^{x_{\alpha_2}(r)} = v_{123}(rs),\]
and \eqref{lands in 1+2+3} follows. Finally, for any $r,s \in R$,
\begin{align*}
v_{123}(r+s) & = [x_{\alpha_1 + \alpha_2}(1), x_{\alpha_3}(r)x_{\alpha_3}(s)] \\
& = [x_{\alpha_1+\alpha_2}(1), x_{\alpha_3}(s)] [x_{\alpha_1+\alpha_2}(1),x_{\alpha_3}(r)]^{x_{\alpha_3}(s)} \\
& = [x_{\alpha_1+\alpha_2}(1), x_{\alpha_3}(s)] [x_{\alpha_1+\alpha_2}(1),x_{\alpha_3}(r)] = v_{123}(s)v_{123}(r),
\end{align*}
where the second equality follows from \eqref{product second entry}, and the third equality holds since $v_{123}(r)$ is central in $X$, which contains $x_{\alpha_3}(r)$. Hence \eqref{R+} holds for $v_{123}(r)$, and this concludes the proof.
\end{proof}

We are now able to prove \Cref{spherical main}.
\begin{corollary}
\label{presentation simply laced spherical}
Let $R$ be a (commutative, unital) ring such that $1_R$ is the sum of two invertible elements, and let $A = (A_{ij})_{i,j \in I}$ be spherical and simply laced. Then $U^+_R \leqslant G_R$ is the amalgamated product of its subgroups $U^+_{JR}$ for $J \subseteq I$ and $\abs{J} \leq 2$, along the canonical inclusions $U_{J_1 R}^+ \leqslant U_{J_2 R}^+$ for $J_1 \subseteq J_2 \subseteq I$.
\end{corollary}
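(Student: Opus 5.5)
The plan is to apply Tits' reduction step, \Cref{reduction step Tits}, to $U^+_R \leqslant \mfG_A(R)$. This reduces the statement to checking its two hypotheses for the simply laced Cartan matrix $A$.

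Hypothesis \textit{(\ref{2 generated by simples})} is covered by \Cref{rk generated by 2 simples}. For distinct $i,j \in I$, the submatrix $A_{\{i,j\}}$ is of type $A_1 \times A_1$ or $A_2$.
\begin{itemize}
\item In type $A_1\times A_1$ the only positive roots are $\alpha_i$ and $\alpha_j$, so there is nothing to prove.
\item In type $A_2$ the only further positive root is $\alpha_i+\alpha_j$. Its root subgroup is generated by the commutators $[x_{\alpha_i}(r),x_{\alpha_j}(1)] = x_{\alpha_i+\alpha_j}(\pm r)$.
\end{itemize}
In both cases this holds over any commutative unital ring.

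For hypothesis \textit{(ii)}, let $J=\{i,j,l\}\subseteq I$. Since $A$ is spherical and simply laced, $A_J$ is a simply laced Cartan matrix of rank $3$. Hence, up to relabelling, $A_J$ is of type $A_1\times A_1\times A_1$, $A_1\times A_2$, or $A_3$. The case $A_3$ is exactly \Cref{case A3}, which uses the assumption that $1_R$ is a sum of two units. In the decomposable cases, $U^+_{JR}$ is the direct product of the unipotent groups of the components. One then checks directly from the presentation that the amalgam of the rank $\leq 2$ subgroups already reproduces this direct product.
\begin{itemize}
\item In type $A_1\times A_1\times A_1$, the amalgam is generated by three copies of $(R,+)$ which pairwise commute, because each pair lies in a common rank $2$ subgroup $U^+_{\{a,b\}R}\cong U_{\alpha_a}\times U_{\alpha_b}$. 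This gives $R^3$.
\item In type $A_1\times A_2$, with $A_1=\{l\}$, the amalgam contains $U^+_{\{i,j\}R}$, which is generated by $U_{\alpha_i}$ and $U_{\alpha_j}$. The group $U_{\alpha_l}$ commutes with both generators, via $U^+_{\{i,l\}R}$ and $U^+_{\{j,l\}R}$. So the amalgam is a quotient of $U^+_{\{i,j\}R}\times U_{\alpha_l}$, and since it surjects onto this product it is isomorphic to it.
\end{itemize}

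Note that the amalgam in \Cref{reduction step Tits}\textit{(ii)} is formed along the inclusions of the simple root groups, matching the description $\wt U$ used in \Cref{case A3}. With both hypotheses verified, \Cref{reduction step Tits} yields the claim.

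The only genuine content is \Cref{case A3}, which is already established. The remaining step needing some care is the identification in the decomposable cases, where one must check that the amalgam does not exceed the direct product. This is handled by the generation argument above.
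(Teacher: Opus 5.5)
Your proposal is correct and follows the paper's proof: you apply Tits' reduction step (\Cref{reduction step Tits}), get hypothesis (i) from \Cref{rk generated by 2 simples}, and get hypothesis (ii) from \Cref{case A3}. The one addition is your explicit check of the decomposable rank-$3$ types $A_1\times A_1\times A_1$ and $A_1\times A_2$. The paper's one-line proof of this corollary leaves that step implicit and only spells it out later, in the proof of \Cref{3 spherical implies}.
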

\begin{proof}
Since $A$ is simply laced, it follows by \Cref{rk generated by 2 simples} that the groups $U_{J R}^+$ with $\abs{J} = 2$ are generated by the root subgroups $U_{\alpha_j}$ for $j \in  J$. Hence the statement follows immediately from \Cref{reduction step Tits} and \Cref{case A3}.
\end{proof}

\subsection{\texorpdfstring{On the condition that $1$ is the sum of two units}{On the condition that 1 is the sum of two units}}
A unital ring $R$ is said to be \emph{semi-local} if $R/J(R)$ is semisimple, with $J(R)$ its Jacobson radical. For commutative rings this is equivalent to having finitely many maximal ideals by the Chinese remainder theorem. The following result is classical, as it follows immediately from \cite{Zelinsky} and \cite{Goldsmith}. We include the proof for convenience of the reader.
\begin{proposition}
\label{exists element a+1 a2+1}
Let $R$ be a (not necessarily commutative) semi-local unital ring. Then the following are equivalent:
\begin{enumerate}[(i)]
\item $R$ has no quotient isomorphic to $\F_2$, \label{cond no quotient F2}
\item $1_R$ is the sum of two invertible elements, \label{1 sum of inv}
\item Every element of $R$ is the sum of two invertible elements. \label{every element sum of two units}
\end{enumerate}
\end{proposition}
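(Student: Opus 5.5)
The plan is to prove the cycle of implications (iii) $\Rightarrow$ (ii) $\Rightarrow$ (i) $\Rightarrow$ (iii). The first two are elementary. For (iii) $\Rightarrow$ (ii), apply (iii) to the element $1_R$. For (ii) $\Rightarrow$ (i), suppose $1 = u + v$ with $u,v \in R^\times$ and that there is a surjective ring morphism $f\colon R \to \F_2$. Then $f(u)$ and $f(v)$ are units of $\F_2$, so both equal $1$. This gives $1 = f(1) = f(u)+f(v) = 0$ in $\F_2$, a contradiction.

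The substance is in (i) $\Rightarrow$ (iii), and I would first reduce it to the semisimple quotient. Let $J := J(R)$ and $\pi\colon R \to R/J$. Recall that $x \in R$ is a unit if and only if $\pi(x)$ is a unit in $R/J$. Given $a \in R$, suppose we can write $\pi(a) = \bar u + \bar v$ with $\bar u,\bar v$ units of $R/J$. Choose any lift $u$ of $\bar u$ and set $v := a - u$. Then $\pi(v) = \bar v$, so both $u$ and $v$ are units of $R$.

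Also, any ring surjection $R/J \to \F_2$ composes with $\pi$ to give a surjection $R \to \F_2$. So $R/J$ again has no quotient $\F_2$, and it suffices to treat a semisimple ring $S$. By Artin--Wedderburn, $S \cong \prod_{t=1}^m M_{n_t}(D_t)$ with each $D_t$ a division ring. Units and sums are computed componentwise, so it suffices to treat each factor $M_n(D)$.

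Each such factor is a quotient of $S$, hence of $R$. So if $n = 1$ and $D = \F_2$, this factor is a quotient $\F_2$, which is excluded. It therefore remains to show that every element of $M_n(D)$ is a sum of two units, whenever $(n,D) \neq (1,\F_2)$.

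Two cases remain.

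\emph{Case $n=1$, $D \neq \F_2$.} Take $a \in D$. If $a \neq 0$, pick $c \in D \setminus \{0,a\}$, which exists since $\abs{D} \geq 3$; then $a = c + (a-c)$ with both summands nonzero. If $a = 0$, write $0 = 1 + (-1)$.

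\emph{Case $n \geq 2$.} Every matrix over a division ring is equivalent to a matrix of the form $\begin{psmallmatrix} I_r & 0 \\ 0 & 0\end{psmallmatrix}$: there are invertible $P,Q$ with $PaQ$ of this form. Since $a$ is a sum of two units exactly when $PaQ$ is, it suffices to treat $e_r := \begin{psmallmatrix} I_r & 0 \\ 0 & 0 \end{psmallmatrix}$. Here I would use the fact that the permutation matrix $\sigma$ of an $n$-cycle and $e_r - \sigma$ can both be made invertible. More concretely, take $u$ to be the matrix of the cyclic shift and $v := e_r - u$. Then $v$ is a sum of a diagonal idempotent and a cyclic shift, and one checks that it is invertible by computing its action on the standard basis (a triangular-like system after reordering). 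This works uniformly over any division ring, including $\F_2$ when $n \geq 2$.

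Alternatively, I could quote \cite{Zelinsky}, which proves precisely that every element of $M_n(D)$ is a sum of two units unless $n=1$ and $D=\F_2$; given that the statement is called classical, citing it is acceptable.

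I expect the main obstacle to be the $n \geq 2$ matrix case, specifically verifying the invertibility of $e_r - u$ for every $0 \leq r \leq n$ over an arbitrary, possibly noncommutative, division ring. If the cyclic-shift choice fails for some $r$, the fallback is Zelinsky's inductive argument via $2\times 2$ blocks. There, for a general $a$, one uses $a = \begin{psmallmatrix} 0 & 1 \\ 1 & 0\end{psmallmatrix}$-type decompositions on $2\times2$ blocks together with the $1\times1$ case on at most one leftover diagonal entry, paired with a $2\times 2$ block to avoid the field $\F_2$ problem.
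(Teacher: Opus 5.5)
Your cycle of implications, the reduction modulo $J(R)$, the Artin--Wedderburn step (including the observation that each factor $M_n(D)$ is itself a quotient of $R$, so cannot be $\F_2$), and the appeal to \cite{Zelinsky} are exactly the paper's proof. Your lifting step (take any lift $u$ of $\bar u$ and put $v := a-u$, using the two-sided criterion that $x$ is a unit iff its image in $R/J$ is) is a little cleaner than the paper's rewriting $r = u + v(1+v^{-1}j)$. You also spell out the trivial componentwise argument for which the paper cites \cite{Goldsmith}. So if you take the citation route, your proof is complete and is essentially the paper's.

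Your self-contained argument for $n \geq 2$, however, fails in one case. For $r = n$ you get $v = I_n - u$. The cyclic shift $u$ fixes the all-ones vector, so $v$ kills it, and $v$ is singular over every nonzero ring and for every $n$.

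For $0 \leq r \leq n-1$ your choice does work. With $u e_i = e_{i+1}$ (indices mod $n$) and $e_r = \mathrm{diag}(d_1,\ldots,d_n)$, the last entry $d_n$ is $0$, so $v e_n = -e_1$. The relation $e_{i+1} = d_i e_i - v e_i$ then puts every $e_i$ in the image of $v$, and surjectivity gives invertibility.

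The missing case is precisely that of invertible $a$. You can reduce it to $a = I_n$ and split $I_n$ into diagonal blocks of size $2$ and, when $n$ is odd, one block of size $3$. Use $I_2 = \begin{psmallmatrix} 0 & -1 \\ 1 & 1 \end{psmallmatrix} + \begin{psmallmatrix} 1 & 1 \\ -1 & 0 \end{psmallmatrix}$ and $I_3 = C + (I_3 - C)$, where $C = \begin{psmallmatrix} 0 & 0 & -1 \\ 1 & 0 & 1 \\ 0 & 1 & 0 \end{psmallmatrix}$ is the companion matrix of $x^3 - x + 1$. All four summands are integer matrices of determinant $\pm 1$, hence invertible in $M_m(D)$ for every division ring $D$, including $\F_2$. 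With this patch your argument becomes a genuinely self-contained proof that avoids Zelinsky's theorem. As written, the claim that the cyclic-shift choice ``works uniformly'' for all $0 \leq r \leq n$ is false.
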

\begin{proof}
The implication \textit{(\ref{every element sum of two units})} $\implies$ \textit{(\ref{1 sum of inv})} is trivial. To see that \textit{(\ref{1 sum of inv})} implies \textit{(\ref{cond no quotient F2})}, suppose $1_R = u + v$ with $u,v \in R^{\times}$. Suppose $R$ has a quotient $\phi \colon R \onto \F_2$. Then $\phi(u) = \phi(v) = 1$, since $u$ and $v$ are invertible and hence so is their image. But then $\phi(1_R) = \phi(u)+\phi(v) = 0$, a contradiction. Finally, to see that \textit{(\ref{cond no quotient F2})} implies \textit{(\ref{every element sum of two units})}, let $J = J(R)$, the Jacobson radical of $R$. It is characterised as the ideal \[J(R) = \left\{ x \in R \mid 1 + yx \in R^{\times}, \text{ for all } y \in R \right\}.\]
First we claim that for $r\in R$, if $r + J$ is a sum of units in $R/J$, then $r$ is a sum of units in $R$. It then suffices to show that every element of $R/J$ is a sum of units.

First of all, if $a + J$ is invertible in $R/J$, then there is some $x \in R$ such that $xa + J = 1 + J$, and in particular, $xa-1 \in J$, i.e.\ $xa \in R^{\times}$ by definition of $J$. It then follows that $a \in R^{\times}$. 

Suppose now that $r + J$ is the sum of two units in $R/J$, i.e.\ $r + J = u + v + J$ for some $u+J, v+J \in (R/J)^{\times}$. Then there is some $j \in J$ such that $r = u + v + j$, and since $u$ and $v$ are invertible by the previous paragraph, we have that \[r = u + v + j = u + v(1+v^{-1}j).\]
Note that $1+v^{-1}j \in R^{\times}$ since $j \in J$, and hence also $v(1+v^{-1}j) \in R^{\times}$. In particular, $r$ is the sum of two units, as desired.

We now show that every element in $R/J$ is a sum of two units. Since $R/J$ is semisimple, by the Wedderburn--Artin theorem $R/J$ decomposes as 
\[R/J \cong \prod_{i = 1}^{n} \mathrm{M}_{n_i}(D_i), \]
with each $D_i$ a division ring. Since $R$ is assumed to have no quotient $\F_2$, in particular no $\mathrm{M}_{n_i}(D_i)$ in the above decomposition is isomorphic to $\F_2$. By \cite{Zelinsky}, not every element of a matrix ring $\mathrm{M}_n(D)$ for $D$ a division ring is a sum of two units if and only if $n = 1$ and $D = \F_2$. Hence for every $i$, every element $r \in \mathrm{M}_{n_i}(D_i)$ is the sum of two units, and by \cite[Proposition 1.2]{Goldsmith}, this property is preserved under taking direct products of rings. In particular, every element of $R/J$ is a sum of two units, and the conclusion follows.
\end{proof}

\begin{remark}
The equivalence of \textit{(\ref{cond no quotient F2})} and \textit{(\ref{every element sum of two units})} is studied more extensively in the literature.  For example, the authors of \cite{Khurana} also show the result for right self-injective rings, using Kaplansky-style decompositions of such rings. The authors of \emph{loc.\ cit.\ }also mention the existence of (non-commutative) von Neumann regular rings in which $2$ is invertible (and hence $1$ is a sum of two units, and the ring has no quotient $\F_2$) in which not every element is the sum of two units, attributed to Bergman (see \cite{Handelman}). 

For our purposes we are mostly interested in the equivalence of \textit{(\ref{cond no quotient F2})} and \textit{(\ref{1 sum of inv})}, since \textit{(\ref{cond no quotient F2})} is a necessary condition by \Cref{remark Tits} and functoriality of $U^+_R$ in $R$, while \textit{(\ref{1 sum of inv})} is a sufficient condition. However, we were unable to find a result in the literature which completely classifies the rings $R$ for which this equivalence holds, even when restricted to the case of commutative rings. 
\end{remark}

\section{\texorpdfstring{Presentation of $\ov{U}^+_R$}{Presentation of U+(R)}}
\label{section presentation U}
In this section, we now prove \Cref{corollary C} and \Cref{corollary injective}. We first state the following result, due to Stein (\cite{Stein}). See also \cite[Theorem 5.5]{MMinject}. 
\begin{lemma}[{\cite{Stein}}]
\label{U+ spherical is U+bar}
Let $R$ be a local domain and let $A$ be spherical. Then $\varphi_R \colon G_R \to G^{\min}_R$ is a bijection. In particular, $U^{\pm}_R \cong \ov{U}^{\pm}_R$.
\end{lemma}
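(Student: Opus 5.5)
Surjectivity of $\varphi_R$ holds by construction: $R$ is local, so $G^{\min}_R = \varphi_R(G_R)$, as recalled in \cref{sect KM groups}. So the content is injectivity, and the plan is to reduce it to a "Gauss decomposition with uniqueness" argument, using \Cref{5.10} together with the Birkhoff decomposition of $G_{\K}$. Throughout we use the identification $G^{\min}_R \subseteq G_{\K}$, under which $\varphi_R$ is the natural map $\psi \colon G_R \to G_{\K}$.

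\emph{Step 1: normal form for $g\in\Ker(\varphi_R)$.} Let $g \in \Ker(\varphi_R)$. Then $\pi_R(g) = p(\varphi_R(g)) = 1 \in B^-_k B^+_k$. Since $A$ is spherical, no condition \eqref{co} is needed, so \Cref{5.10} applies with $J = I$ (note $G_{IR} = G_R$ here, as $G_R$ is generated by its rank one subgroups and $T_R$, and $T_R$ normalises them). It gives
\[ g = t\, u_-\, u_+, \qquad t \in T_R,\ u_\pm \in U^\pm_R. \]

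\emph{Step 2: separating the factors in $G_{\K}$.} Apply $\psi$ to get $1 = \psi(t)\psi(u_-)\psi(u_+)$ in $G_{\K}$. Hence $\psi(u_-) = \psi(t)^{-1}\psi(u_+)^{-1} \in U^-_{\K} \cap B^+_{\K} = \{1\}$ by \eqref{Bir}. Then $\psi(u_+) = \psi(t)^{-1}$ lies in $U^+_{\K} \cap T_{\K} = \{1\}$. So $\psi(t) = \psi(u_-) = \psi(u_+) = 1$.

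\emph{Step 3: injectivity on each factor.} It remains to show that $\psi$ is injective on $T_R$, $U^+_R$ and $U^-_R$; this gives $t = u_- = u_+ = 1$, hence $g = 1$, and the "in particular" follows.

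For $T_R \cong (R^\times)^{I}$, which maps into $T_{\K} \cong (\K^\times)^I$ via the inclusion $R^\times \subseteq \K^\times$, injectivity is immediate.

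For $U^+_R$, in the spherical case $\Phi^+$ is finite. Fix an ordering $\beta_1, \ldots, \beta_N$ of $\Phi^+$, say by height. Using the commutator relations \eqref{commutation KM min}, every element of $U^+_R$ can be written as $x_{\beta_1}(r_1)\cdots x_{\beta_N}(r_N)$: one moves root elements past each other, and the correction terms only involve roots of strictly larger height, so an induction on height terminates. On the other side, in $U^+_{\K}$ such products are unique, by the classical Chevalley normal form over a field. So if $\psi(x_{\beta_1}(r_1)\cdots x_{\beta_N}(r_N)) = 1$, then all $r_i = 0$ in $\K$, hence in $R$. The case $U^-_R$ follows by applying the Cartan--Chevalley involution $\omega_R$.

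The main obstacle is Step 1: showing that an element of $G_R$ whose image in $G_k$ is trivial already admits a Gauss decomposition inside $G_R$ itself, not merely after passing to $G^{\min}_R$. That is exactly what \Cref{5.10} provides, so I expect the real work to be the careful verification that its hypotheses apply with $J = I$ in the spherical case. The normal form in Step 3 is routine.
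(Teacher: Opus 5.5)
Your argument is correct, but it is not what the paper does. The paper gives no proof of this lemma; it simply quotes Stein's theorem \cite{Stein}, a presentation-theoretic result on Chevalley groups over local rings, and points to \cite[Theorem 5.5]{MMinject}.

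Your route is a Gauss-decomposition argument, in three moves:
\begin{enumerate}
\item \Cref{5.10} with $J=I$ puts every element of $\Ker(\varphi_R)$ into $T_RU^-_RU^+_R$.
\item The relation \eqref{Bir} in $G_{\K}$ forces each of the three factors to be trivial in $G_{\K}$.
\item Injectivity on $T_R$ and on $U^{\pm}_R$ finishes: you use the height-ordered normal form in $G_R$, together with uniqueness of the Chevalley normal form over $\K$.
\end{enumerate}
This matches the structure the paper itself relies on in the proof of \Cref{injectivity}. There, injectivity of $\varphi_R$ on $U^+_R$ is established first, and then the proof of \cite[Theorem 5.5]{MMinject} is said to apply verbatim.

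What your version buys is transparency. It shows that sphericity enters only through injectivity on $U^{\pm}_R$, which is why the same mechanism extends to the $3$-spherical simply laced case. What it costs is twofold. First, it needs $R$ to be a domain, so that everything embeds into $G_{\K}$. Second, all the weight sits on \Cref{5.10}, a non-trivial result imported from \cite{MMinject}. Using that lemma as a black box is legitimate within this paper. However, your proof is only as independent of Stein's theorem as that lemma is. If you intend it as a self-contained proof, you should check that \Cref{5.10} is not itself derived from injectivity in the spherical case.

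One small point in Step 1. Your justification of $G_{IR}=G_R$ only yields $G_R=G_{IR}T_R$. Also, the generation of $G_R$ by the $G_{iR}$ and $T_R$ should be justified without \eqref{co}, for example from $\tilde{w}U_{\alpha_i}\tilde{w}^{-1}=U_{w\alpha_i}$ and $\Phi=\mc{W}\cdot\Pi$. The gap is harmless:
\begin{itemize}
\item write $g=g't$ with $g'\in G_{IR}$ and $t\in T_R$;
\item then $\pi_R(g')\in T_k\subseteq B^-_kB^+_k$, so \Cref{5.10} gives $g'\in T_RU^-_RU^+_R$;
\item hence $g\in T_RU^-_RU^+_R$, since $T_R$ normalises $U^{\pm}_R$.
\end{itemize}
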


The following theorem shows the reverse implication in \Cref{corollary C}.
\begin{theorem}
\label{3 spherical implies}
Suppose $A$ is a $3$-spherical simply laced GCM, and let $R$ be a local Bézout domain with residue field different from $\F_2$. Then $\ov{U}^+_R$ is the amalgamated product of its subgroups $\ov{U}_{JR}^+$ with $J\subseteq I$ and $\abs{J} \leq 2$, along the inclusions $\ov{U}_{J_1 R}^+ \leqslant \ov{U}^+_{J_2 R}$ for all $J_1 \subseteq J_2$.
\end{theorem}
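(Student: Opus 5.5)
The plan is to combine \Cref{main thm in text} with \Cref{presentation simply laced spherical}. We first check the hypotheses of \Cref{main thm in text}. Since $A$ is simply laced, $A_{ij}A_{ji} \leq 1$ for all $i \neq j$. The condition \eqref{co} therefore imposes nothing on $R$, and $A$ is $2$-spherical. Moreover $R$ is a local Bézout domain, so \Cref{main thm in text} applies. It reduces the claim to showing that for every $I' \subseteq I$ with $\abs{I'} = 3$, the group $\ov{U}^+_{I'R}$ is the amalgamated product of its subgroups $\ov{U}^+_{JR}$, $J \subseteq I'$, $\abs{J}\leq 2$, along the canonical inclusions.

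Fix such an $I'$. Since $A$ is $3$-spherical and simply laced, $A_{I'}$ is a simply laced Cartan matrix, so its type is $A_3$, $A_2 \times A_1$ or $A_1^3$. We would like to identify $\ov{U}^+_{I'R}$ with $U^+_R \leqslant \mfG_{A_{I'}}(R)$ and then invoke \Cref{presentation simply laced spherical}. Because $R$ is local with residue field $k \neq \F_2$, $R$ has no quotient $\F_2$. Hence, by \Cref{exists element a+1 a2+1}, $1_R$ is the sum of two units. Consequently \Cref{presentation simply laced spherical} gives the amalgam presentation for $U^+_R \leqslant \mfG_{A_{I'}}(R)$. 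By \Cref{U+ spherical is U+bar}, this group maps isomorphically onto its image in $G^{\min}_{A_{I'}}(R) \leqslant \mfG_{A_{I'}}(\K)$, and the same holds for each rank $\leq 2$ subgroup.

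It remains to identify that image with $\ov{U}^+_{I'R} \leqslant \ov{U}^+_R \leqslant \mfG_A(\K)$, compatibly with the subgroups $\ov{U}^+_{JR}$. This is the step requiring care, and I expect it to be the main obstacle. The natural map $\mfG_{A_{I'}}(\K) \to \mfG_A(\K)$ is injective over a field, since both groups are Tits functors over $\K$ and the Levi factor $G_{I'\K}$ embeds; this can also be seen via the Bruhat decomposition. This map sends $x_\alpha(r)$ to $x_\alpha(r)$ for $\alpha \in \Phi^+(I')$. The image of $U^+_R(A_{I'})$ is then exactly $\langle U_{\alpha R} \mid \alpha \in \Phi^+(I')\rangle$ inside $\mfG_A(\K)$, which is $\ov{U}^+_{I'R}$. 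The same identification applies to each $\ov{U}^+_{JR}$ with $J \subseteq I'$, so the amalgam structure transfers.

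Feeding this rank-$3$ presentation into \Cref{main thm in text} then yields the statement. As a consistency check, by \Cref{generated by simples 2 spherical} each $\ov{U}^+_{JR}$ with $\abs{J}=2$ is generated by $U_{\alpha_j}$, $j\in J$. Hence the amalgam over all $J$ with $\abs{J}\leq 2$ agrees with the amalgam of the $\ov{U}_{i_1,i_2}$ along the root subgroups.
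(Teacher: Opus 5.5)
Your proposal is correct and is essentially the paper's proof: reduce to rank $3$ via \Cref{main thm in text}, pass between $U^+_R$ and $\ov{U}^+_R$ using \Cref{U+ spherical is U+bar}, and get the rank-$3$ presentation from \Cref{case A3} together with \Cref{exists element a+1 a2+1}. The differences are minor. You invoke \Cref{presentation simply laced spherical} uniformly for all three rank-$3$ types, whereas for $A_2\times A_1$ and $A_1^3$ that corollary amounts to the direct-product observation the paper makes explicitly, since Tits' reduction step gives nothing in rank $3$. You also spell out the identification of $\ov{U}^+_{I'R}\leqslant \mfG_A(\K)$ with the image of the rank-$3$ group, which the paper leaves implicit.
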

\begin{proof}
By \Cref{main thm in text}, it suffices to show the statement for all rank $3$ GCM's $A$ of simply laced spherical type. In this case, $\ov{U}^+_R \cong U^+_R$ by \Cref{U+ spherical is U+bar}. We distinguish three cases for $A$. 

If $A$ is indecomposable, then it is of type $A_3$, and the statement follows from \Cref{case A3}, since by \Cref{exists element a+1 a2+1}, $1_R$ is a sum of two units.  

If $A$ is decomposable, then $U^+_R \cong U^+_{A' R} \times (R,+)$, where $A'$ is either of type $A_2$ or of type $A_1 \times A_1$, i.e.\ \[U^+_R \cong \left\{ \begin{psmallmatrix} 1 & \ast & \ast \\ 0 & 1 & \ast \\ 0 & 0 & 1\end{psmallmatrix} \in \SL_3(R)\right\} \times (R,+) \quad \text{ or } \quad U^+_R \cong (R,+)\times(R,+)\times (R,+).\] 
It is then immediate from the relations \eqref{commutation KM min} that $U^+_R$ has the desired presentation.
\end{proof}

We now prove \Cref{corollary injective}. In \cite[Theorem A]{MMinject}, this result was already proven in the more general setting of arbitrary $2$-spherical GCM's, but as a consequence of our work we obtain an alternative proof in the $3$-spherical simply laced case. 
\begin{proposition}
\label{injectivity}
Let $A = (A_{ij})_{i,j \in I}$ be $3$-spherical and simply laced. Let $R$ be a local Bézout domain with residue field different from $\F_2$, and with field of fractions $\K$. Then the map
\[\varphi_R \colon G_R \to G^{\min}_R \subseteq G_{\K}\]
is injective.
\end{proposition}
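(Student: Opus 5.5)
Since $G_R$ is generated by $U^+_R$, $U^-_R$ and $T_R$ (\Cref{generated by simples 2 spherical}), the plan is to first show that $\varphi_R$ is injective on $U^+_R$, then propagate this to all of $G_R$ using a Birkhoff-type decomposition.

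\textbf{Step 1: injectivity on $U^+_R$.} By \Cref{3 spherical implies}, $\ov{U}^+_R$ is the amalgamated product of the $\ov{U}^+_{JR}$ with $\abs{J}\le 2$. For $\abs{J}\le 2$ the type $A_J$ is spherical, so $\varphi_R$ restricts to an isomorphism $U^+_{JR}\to\ov{U}^+_{JR}$ by \Cref{U+ spherical is U+bar} (applied to $\mfG_{A_J}$, whose $U^+$ maps onto $U^+_{JR}$). The universal property of the amalgam then gives a homomorphism $\psi\colon \ov{U}^+_R\to U^+_R$ extending the inverses on each $\ov{U}^+_{JR}$. The composite $\psi\circ\varphi_R$ is the identity on every $U_{\alpha_i}$. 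By \Cref{generated by simples 2 spherical}, these subgroups generate $U^+_R$, so $\psi\circ\varphi_R=\mathrm{id}$ on $U^+_R$. Hence $\varphi_R|_{U^+_R}$ is injective. Applying the Cartan--Chevalley involution $\omega_R$ transfers this to $U^-_R$. The map $\varphi_R$ is also injective on $T_R$, since the torus of $G_R$ maps isomorphically to $(R^\times)^{\abs I}\le T_\K$.

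\textbf{Step 2: from the Borel subgroups to $G_R$.} Take $g\in\Ker\varphi_R$. Then $\pi_R(g)=1\in B^-_kB^+_k$, so by \Cref{5.10} with $J=I$ we get $g=t\,u_-u_+$ with $t\in T_R$ and $u_\pm\in U^\pm_R$. This needs $G_{IR}$ to contain the relevant elements, i.e.\ $G_R=T_RG_{IR}$, which holds because $T_R$ normalises $G_{IR}$. Now $\varphi_R(u_-)=\varphi_R(t)^{-1}\varphi_R(u_+)^{-1}$ lies in $\ov{U}^-_R\cap \ov{B}^+_R=\{1\}$ by \eqref{Bir}. Thus $\varphi_R(u_-)=1$ and also $\varphi_R(t u_+)=1$. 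From $T_R\cap\ov{U}^+_R=\{1\}$ we deduce $\varphi_R(t)=\varphi_R(u_+)=1$. Injectivity on $U^\pm_R$ and on $T_R$ (Step 1) then gives $t=u_-=u_+=1$, so $g=1$.

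\textbf{Main obstacle.} The delicate point is in Step 1, in checking that the local isomorphisms $U^+_{JR}\cong\ov{U}^+_{JR}$ are compatible with the inclusions in the amalgam. This holds because every map involved is induced by $\varphi_R$, but one must also verify that $U^+_{JR}\le G_R$ really is the image of $U^+$ of $\mfG_{A_J}(R)$ and that the latter maps injectively. I would handle this by composing $U^+_{A_J}(R)\to U^+_{JR}\to \ov{U}^+_{JR}$: the composite is bijective by \Cref{U+ spherical is U+bar} (via the embedding into $G_\K$), so both maps are isomorphisms.
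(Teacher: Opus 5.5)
Your proposal is correct and follows the paper's route. Step 1 is the paper's argument: the rank-$2$ amalgam decomposition of $\ov{U}^+_R$ from \Cref{3 spherical implies}, the fact that the rank-$2$ relations already hold in $U^+_R$, and generation of $U^+_R$ by the simple root groups together give injectivity on $U^+_R$. For Step 2 the paper only says that the proof of \cite[Theorem 5.5]{MMinject} applies verbatim, and your argument via \Cref{5.10} and \eqref{Bir} (including the reduction $G_R=T_RG_{IR}$ and the transfer to $U^-_R$ by $\omega_R$) is a sound explicit version of that step.
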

\begin{proof}
By \Cref{3 spherical implies}, $\ov{U}^+_R$ is the free product of the root subgroups $U_{\alpha R}$, for the roots $\alpha \in \Psi$, where
\[ \Psi := \left\{\alpha \in \Phi \mid \exists i,j \in I \text{ such that }\alpha \in (\N \alpha_i + \N\alpha_j ) \cap \Phi^+\right\}, \]
subject to the relations \eqref{commutation KM min} for all pairs $\{\alpha, \beta \}\subseteq \Psi$ with $\alpha \neq \pm \beta$. But these relations are all already satisfied in $U^+_R$, and since moreover $U^+_R$ is generated by all the simple root subgroups $U_{\alpha_i R}$ by \Cref{generated by simples 2 spherical}, it follows that $\varphi_R \hspace{-3pt}\mid_{U^+_R}$ is injective. 

The proof of \cite[Theorem 5.5]{MMinject}, now applies \emph{verbatim}.
\end{proof}
We now show the forward implication in \Cref{corollary C}.
\begin{proposition}
\label{3 spherical necessary}
Suppose $A = (A_{ij})_{i,j \in I}$ is simply laced and $R$ is a local Bézout domain with residue field different from $\F_2$. If $\ov{U}^+_R$ is isomorphic to the amalgamated product of its subgroups $\ov{U}_{JR}^+$ with $J \subseteq I$ and $\abs{J}\leq 2$ along the inclusions $\ov{U}^+_{J_1 R } \leqslant \ov{U}^+_{J_2R}$ for $J_1 \subseteq J_2$, then $A$ is $3$-spherical.
\end{proposition}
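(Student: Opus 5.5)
We argue by contraposition: assume $A$ is simply laced but not $3$-spherical, and show that $\ov{U}^+_R$ cannot be the amalgamated product of its subgroups $\ov{U}^+_{JR}$ with $\abs{J}\leq 2$. The natural tool is \cite[Theorem E]{CapMar26}, which says that over a field $\K$, $U^+_{\K}$ has this presentation only if $A$ is $3$-spherical. The plan is to transport the failure from a field to $R$, using the functorial projection $p\colon G^{\min}_R\to G_k$ onto the residue field $k$.

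\textbf{Step 1: reduce to a rank $3$ non-spherical subdiagram.} Since $A$ is not $3$-spherical, some $J_0\subseteq I$ with $\abs{J_0}=3$ has $A_{J_0}$ non-spherical. Being simply laced, $A_{J_0}$ must then be of affine type $\tilde A_2$, a triangle with all $m_{ij}=3$.

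\textbf{Step 2: build the comparison with $k$.} Let $\wt{U}$ denote the abstract amalgam of the groups $\ov{U}^+_{JR}$ with $\abs{J}\leq 2$, and let $\wt{U}_k$ denote the amalgam of the groups $U^+_{Jk}$. The groups $\ov{U}^+_{JR}$ with $\abs{J}\leq 2$ are spherical rank $\leq 2$ unipotent groups. By \Cref{U+ spherical is U+bar}, each is the explicit group given by the relations \eqref{commutation KM min}, and it surjects onto $U^+_{Jk}$ via $p$. These surjections are compatible with the inclusions, so they induce a surjection $\wt{U}\onto\wt{U}_k$.

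\textbf{Step 3: derive a contradiction.} Suppose, for contradiction, that the canonical map $\wt{U}\to\ov{U}^+_R$ is an isomorphism. We get the commutative square
\[\wt{U}\to \ov{U}^+_R \xrightarrow{p} U^+_k \quad\text{and}\quad \wt{U}\onto\wt{U}_k\to U^+_k.\]
The task is to show that $\wt{U}_k\to U^+_k$ is injective, which contradicts \cite[Theorem E]{CapMar26} applied to $k$. Note that $k\neq\F_2$, and for $\tilde A_2$ the field size condition is presumably fine.

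\textbf{Main obstacle.} Injectivity of $\wt{U}_k\to U^+_k$ is not automatic from the square, since the square only relates quotients. I would try to construct a section instead. The Teichmüller-type splitting $k\to R$ is unavailable in general, so this route is blocked. The alternative I would try first is to bypass residue fields altogether and use the fraction field $\K$. Pass to the rank $3$ subgroup $\ov{U}^+_{J_0R}$. If $\ov{U}^+_R$ is the amalgam, then so is $\ov{U}^+_{J_0R}$: one would argue this via a retraction $\ov{U}^+_R\to\ov{U}^+_{J_0R}$, or via \Cref{Tits presentation} restricted to the residue $\mc{R}_{J_0}(c_0^-)$ intersected with $C_0$.

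Then, by \Cref{Tits presentation}, the chamber system $\aff(\mc{R}_{J_0})=\ov{U}^+_{J_0R}c_0^-$ would be simply connected. By \Cref{coincides w building of K}, it sits inside the $\tilde A_2$ building of $G_{J_0\K}$ as the set of chambers opposite $c_0^+$. I would then adapt the argument of \cite{CapMar26}: in affine buildings the set of chambers opposite a given chamber is not simply connected, and a non-contractible loop can be exhibited inside an apartment through $c_0^-$. Such a loop lies in $\ov{U}^+_{J_0R}c_0^-$ because it only uses $\tilde{w}$'s and root elements with entries in $\Z\subseteq R$. This step is the hard part.
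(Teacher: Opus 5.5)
Your proposal never reaches a contradiction, because both routes stop at the decisive step. You already recognise that the residue-field route is blocked: nothing forces $\wt{U}_k \to U^+_k$ to be injective. The fraction-field route rests entirely on the step you call ``the hard part''. That step is only stated as an intention to adapt \cite{CapMar26}, and the geometric picture behind it is wrong.

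The set $\aff(\mc{R}_{J_0}) = \ov{U}^+_{J_0R}c_0^-$ (\Cref{aff of residue}) is not the set of chambers opposite $c_0^+$ in the $\K$-building. That set is $U^+_{J_0\K}c_0^-$, which is in bijection with $U^+_{J_0\K}$ because $U^+_{\K}\cap B^-_{\K}=\{1\}$. Your set is in bijection with $\ov{U}^+_{J_0R}\subsetneq U^+_{J_0\K}$ whenever $R\neq\K$. Already in rank one, identifying $\SL_2(\K)/B^-_2(\K)$ with $\mathbb{P}^1(\K)$, it is $\{[x:1]\mid x\in R\}$ rather than $\{[x:1]\mid x\in\K\}$. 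Your claim that a non-contractible loop can be built from $\tilde{w}$'s and integral root elements is also unsupported; note that $\tilde{w}c_0^-\notin\ov{U}^+_Rc_0^-$ for $w\neq 1$.

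The missing idea is that you never need to leave $R$, because the hypothesis itself identifies $\ov{U}^+_R$ with $U^+_R$. The argument runs as follows.
\begin{enumerate}
\item By \Cref{U+ spherical is U+bar}, $\varphi_R$ identifies $U^+_{JR}$ with $\ov{U}^+_{JR}$ for $\abs{J}\leq 2$.
\item Hence the amalgam $\wt{U}$ of the groups $\ov{U}^+_{JR}$ maps to $U^+_R$. This map is surjective, since $U^+_R$ is generated by the simple root groups (\Cref{generated by simples 2 spherical}; \eqref{co} is vacuous for simply laced $A$).
\item Composing with $\varphi_R$ gives the canonical map $\wt{U}\to\ov{U}^+_R$, which is an isomorphism by hypothesis.
\item Therefore $\varphi_R|_{U^+_R}$ is injective, and $U^+_R$ is itself the amalgam of its rank $2$ subgroups.
\end{enumerate}
This is exactly the argument in the proof of \Cref{injectivity}. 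That proof uses only the amalgam presentation of $\ov{U}^+_R$, which is your hypothesis, so the $3$-sphericity assumption in its statement is not needed. Alternatively, injectivity is \cite[Theorem A]{MMinject}. The paper then applies \cite[Theorem E]{CapMar26} directly to $U^+_R\leqslant G_R$ and is done. No passage to $k$ or $\K$, no restriction to an $\tilde{A}_2$ subdiagram and no building geometry is required.
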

\begin{proof}
By \Cref{injectivity}, $\varphi_R$ is injective, and hence $\ov{U}^+_R$ is isomorphic to $U^+_R \leqslant G_R$. The result then immediately follows from \cite[Theorem E]{CapMar26}.
\end{proof}

\Cref{corollary C} now follows from \Cref{3 spherical implies} and \Cref{3 spherical necessary}.

Finally, note that the stabilisers in $\ov{B}^+_R$ of the $J$-residues $\ov{U}^+_R c_0^- = \ov{B}^+_R c_0^- \subseteq \mc{C}^-(G^{\min}_R)$ are given by 
\[ \ov{B}^+_R \cap \ov{P}^-_{JR} = ( \ov{U}^+_R \cap \ov{P}^-_{JR})T_R = \ov{U}^+_{JR}T_R, \]
where the last equality follows by \eqref{intersection B+ P-}. We then deduce the following.

\begin{corollary}
Let $R$ be a local Bézout domain with residue field different from $\F_2$. Let $A$ be a simply laced GCM. Then $\ov{B}^+_R$ is the amalgamated product of its subgroups $\ov{U}_{JR}^+ T_R$ for all $J \subseteq I$ with $\abs{J} \leq 2$, along the inclusions $\ov{U}_{J_1 R}^+ T_R \leqslant \ov{U}_{J_2 R}^+ T_R$ for $J_1 \subseteq J_2$, if and only if $A$ is $3$-spherical.
\end{corollary}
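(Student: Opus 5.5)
We will deduce the statement about $\ov{B}^+_R$ from the results already established for $\ov{U}^+_R$, by running the same chamber-system argument with $\ov{B}^+_R$ in place of $\ov{U}^+_R$. The key observation is that $\ov{B}^+_R = \ov{U}^+_R T_R$ and $T_R$ fixes $c_0^-$, because $T_R \subseteq \ov{B}^-_R$. Hence $\ov{B}^+_R c_0^- = \ov{U}^+_R c_0^- = C_0$, and $\ov{B}^+_R$ acts transitively on $C_0$ by chamber automorphisms.

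For the reverse implication, assume $A$ is $3$-spherical. The proof of \Cref{main thm in text}, combined with \Cref{3 spherical implies}, shows that $C_0$ is simply connected. More precisely, \Cref{314 devmu} applies once \eqref{lsco} holds, and \eqref{lsco} holds because the rank-$3$ subgroups $\ov{U}^+_{JR}$ have the required amalgam presentations, by \Cref{case A3} and \Cref{U+ spherical is U+bar}. Since simple connectedness is a property of the chamber system $C_0$ alone, we may now apply \Cref{Tits presentation} to the transitive action of $\ov{B}^+_R$ on $C_0$. The stabiliser in $\ov{B}^+_R$ of the $J$-residue of $c_0^-$ in $C_0$ is $\ov{B}^+_R \cap \ov{P}^-_{JR} = \ov{U}^+_{JR}T_R$, computed just before the statement using \eqref{intersection B+ P-}. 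This yields the amalgam decomposition of $\ov{B}^+_R$ along the subgroups $\ov{U}^+_{JR}T_R$ with $\abs{J}\le 2$.

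For the forward implication, suppose $\ov{B}^+_R$ is such an amalgam. By \Cref{Tits presentation}, again applied to the transitive action of $\ov{B}^+_R$ on $C_0$, the chamber system $C_0$ is simply connected. Applying \Cref{Tits presentation} once more, now to the transitive action of $\ov{U}^+_R$ on $C_0$, whose rank $\le 2$ stabilisers are $\ov{U}^+_{JR}$ by \eqref{intersection B+ P-}, shows that $\ov{U}^+_R$ is the amalgam of its subgroups $\ov{U}^+_{JR}$. Then \Cref{3 spherical necessary} gives that $A$ is $3$-spherical.

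The main point requiring care is that the residues of $C_0$, viewed as a chamber system in its own right, are exactly the sets $\aff$-type orbits $\ov{U}^+_{JR}c_0^-$, so that their stabilisers are as claimed. This follows from \Cref{aff of residue} with $w_J = 1$, together with the fact that $T_R$ normalises $\ov{U}^+_{JR}$. A second point is that \Cref{Tits presentation} requires only transitivity and not freeness of the action, so having nontrivial stabiliser $T_R$ of $c_0^-$ in $\ov{B}^+_R$ causes no difficulty.
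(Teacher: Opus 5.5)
Your proposal is correct and follows the paper's argument. $\ov{B}^+_R$ acts transitively on $C_0=\ov{U}^+_R c_0^-=\ov{B}^+_R c_0^-$ with residue stabilisers $\ov{B}^+_R\cap \ov{P}^-_{JR}=\ov{U}^+_{JR}T_R$ (by \eqref{intersection B+ P-}), so \Cref{Tits presentation} makes the amalgam property for $\ov{B}^+_R$ equivalent to simple connectedness of $C_0$, hence to the amalgam property for $\ov{U}^+_R$, and \Cref{corollary C} finishes. (For the rank-$3$ input it is cleaner to cite \Cref{3 spherical implies}, which also covers the decomposable types $A_2\times A_1$ and $A_1\times A_1\times A_1$ that \Cref{case A3} alone does not.)
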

Note also that applying the Cartan--Chevalley involution $\omega_R$, the analogue of any result stated in this paper for $\ov{U}^+_R$ and $U^+_R$ also holds for $\ov{U}^-_R$ and $U^-_R$.

\bibliographystyle{alpha}
\bibliography{references}

\end{document}